\title[Energy bounds in Hamming spaces]{\bf Energy bounds for codes and designs in Hamming spaces}
\def\ds{\displaystyle}
\date{\today}
\newtheorem{theorem}{Theorem}[section]
\newtheorem{lemma}[theorem]{Lemma}
\newtheorem{corollary}[theorem]{Corollary}
\theoremstyle{definition}
\newtheorem{defn}[theorem]{Definition}
\newtheorem{remark}[theorem]{Remark}
\newtheorem{problem}[theorem]{Problem}
\newcommand{\R}{\mathbb{R}}
\author[P. Boyvalenkov]{P. G. Boyvalenkov $^\dagger$}
\address{Institute of Mathematics and Informatics, Bulgarian Academy of Sciences,
8 G Bonchev Str.,
1113  Sofia, Bulgaria \\
and Faculty of Mathematics and Natural Sciences, South-Western University, Blagoevgrad, Bulgaria.
}
\email{peter@math.bas.bg}
\thanks{\noindent $^\dagger$ The research of this author was supported, in part, by a Bulgarian NSF contract I01/0003.}
\author[P. Dragnev]{P. D. Dragnev $^{\dagger \dagger}$}
\address{Department of Mathematical Sciences,
Indiana-Purdue University
Fort Wayne, IN 46805, USA }
\email{dragnevp@ipfw.edu}
\thanks{\noindent $^{\dagger \dagger}$ The research of this author was supported, in part, by a Simons Foundation grant no. 282207.}
\author[D. Hardin]{D. P. Hardin$^*$}
\address{Center for Constructive Approximation, Department of Mathematics, \hspace*{.1in}
Vanderbilt University,
Nashville, TN 37240, USA  }
\email{doug.hardin@vanderbilt.edu}
\author[E. Saff]{E. B. Saff$^*$}
\email{edward.b.saff@vanderbilt.edu}
\thanks{\noindent $^*$ The research of these authors was supported, in part,
by the U. S. National Science Foundation under grants DMS-1412428 and DMS-1516400.
}
\author[M. Stoyanova]{M. M. Stoyanova$^{**}$}
\address{Faculty of Mathematics and Informatics,
Sofia University,
5 James Bourchier Blvd.,
1164 Sofia, Bulgaria}
\email{stoyanova@fmi.uni-sofia.bg}
\thanks{
\noindent $^{**}$ The research of this author was supported, in part, by the
Science Foundation of Sofia University under contracts 144/2015 and 57/2016.
}
\thanks{The authors express their gratitude to Erwin Schr\"{o}dinger International Institute for providing
conducive research atmosphere during their stay when this manuscript was started.}
\def\ds{\displaystyle}
\begin{document}
\maketitle

\begin{abstract}
We obtain universal bounds on the energy of codes and designs in Hamming spaces.
Our bounds hold for a large class of potential functions, allow a unified treatment, and
can be viewed as a generalization of the Levenshtein bounds for maximal codes.
\end{abstract}

{\bf Keywords.} Hamming space, potential functions, energy of a code, error-correcting co\-des, $\tau$-designs.

{\bf MSC Codes.} 74G65, 94B65, 52A40, 05B30

\section{Introduction}

Let $Q=\{0,1,\ldots,q-1\}$ be the alphabet of $q$ symbols and $\mathbb{H}(n,q)$   the set of all $q$-ary
vectors $x=(x_1,x_2,\ldots,x_n)$ over $Q$. The Hamming distance $d(x,y)$ between
points $x=(x_1,x_2,\ldots,x_n)$ and $y=(y_1,y_2,\ldots,y_n)$ from $\mathbb{H}(n,q)$ is equal to the number of
coordinates in which they differ. The use of $q$ might suggest that the alphabet is a
finite field and most coding theory applications assume this, but we will not make use of a field structure.
In particular, $q$ is not necessarily a power of a prime.

In this paper we shall find it convenient to use the ``inner product"
\[ \langle x,y \rangle := 1-\frac{2d(x,y)}{n}\]
instead of the distance $d(x,y)$ (see \cite{Lev}).
The set of all possible distances   and the set of all possible  inner products between points in $\mathbb{H}(n,q)$     will be denoted by  $Z_n$ and $T_n$, respectively; i.e., $Z_n=\{0,1,\ldots, n\}$ and $T_n=\{t_0,t_1,\ldots,t_n\}$ where
$t_i :=-1+\frac{2i}{n}$, $i=0,1,\ldots,n$.  In addition to the discrete sets $T_n\subset [-1,1]$ and $Z_n\subset [0,n]$
we shall use  the complete intervals $0\le z\le n$ and $-1\le t\le 1$ with dependent variables $z$ and $t$ related through  the equation
\begin{equation}\label{zt}
t = 1 - \frac{2z}{n}.
 \end{equation}

We refer to any nonempty set $C \subset \mathbb{H}(n,q)$ as a {\em code}.
For a given potential function
$h:[-1,1) \to (0,+\infty)$, we define the {\sl $h$-energy}   of $C$ by
\begin{equation}
E(n,C;h):=\frac{1}{|C|}\sum_{x, y \in C, x \neq y} h(\langle x,y \rangle),
\end{equation}
where $|C|$ denotes the cardinality of $C$.
Many problems of interest (cf. \cite{AB,ABL,BHS,CZ})  can be formulated as minimizing
   the quantity $E(n,C;h)$  for a suitable $h$  over codes $C$ of fixed   cardinality; that is, to determine
\begin{equation} \label{minEn} {\mathcal E}(n,M;h):=\min \{E(n,C;h):|C|=M\},
\end{equation}
the   minimal $h$-energy of a code $C \subset \mathbb{H}(n,q)$ of cardinality $M$.
While we only need the values of $h$ on the discrete set $T_n$ for computing the $h$-energy, in this paper we shall further assume that $h$ is {\em (strictly) absolutely monotone} on
the interval [-1,1); that is, $h$ and all its derivatives are defined and (positive) nonnegative on this interval.  We remark that $F(z)=h(t)$, where $z$ is given by \eqref{zt}, is   {\em completely monotone} on $(0,n]$ (that is,  $(-1)^k F^{(k)}(z)\ge 0$ for all $z\in (0,n]$) if and only if $h$ is absolutely monotone on $[-1,1]$.

Important examples of such potentials include
Riesz $\alpha$-potentials $h(t):=( n(1-t)/2)^{-\alpha}=z^{-\alpha} $ for $\alpha>0$ and also  exponential potentials $h(t)=\exp(\alpha t)$.  The energy of the latter potentials is used
in estimating the error probability for random codewords \cite{ME}.  Additional applications of energy problems in Hamming spaces are given in \cite{AB,CZ}.
We further remark that energy minimizing codes $C \subset \mathbb{H}(n,q)$ for the Riesz $\alpha$-potentials or $\alpha$-exponential potentials as mentioned above, approach {\em maximal codes}; that is, codes that  maximize the minimum distance
$d(C):=\min\{d(x,y):x,y \in C, x \neq y\}$ as $\alpha\to \infty$.


 In \cite{CZ} Cohn and Zhao  studied   minimal energy problems for  $\mathbb{H}(n,q)$ and found
codes $C$ that are {\em universally optimal} in the sense of \cite{CK}; i.e., $E(n,C;h)= {\mathcal E}(n,|C|;h)$ for a large class of potential functions $h$.
Specifically, they consider $h$ of the form
\begin{equation}\label{hf}
h(\langle x, y\rangle)=F(d(x,y))
\end{equation} where $F:Z_n\setminus\{0\}\to \R$ is {\em  completely monotone} in a discrete sense; namely \linebreak $(-1)^k\Delta^k F(j)\ge 0$ for $k\ge 0$ and $j=1, \ldots, n-k$, where
$\Delta$ is the forward difference operator   $\Delta F(j):=F(j+1)-F(j)$.
Note that if a function $F$ is completely monotone on $[0,n]$ in the continuous sense, then its restriction to $Z_n$ will be completely monotone in the discrete sense as $\Delta^k F(i)=F^{(k)} (\xi)$ for some $\xi \in (i,i+k)$. Our setting, while somewhat more restrictive than   in   \cite{CZ},
allows for a unified definition,
proof, and investigation of universal (in sense of Levenshtein) bounds
(Theorems \ref{thm 6}, \ref{thm 7} and \ref{thm4.1}).  Furthermore, such a continuous setting  facilitates an asymptotic analysis  (as $n \to \infty$) of our bounds.

In this paper we obtain universal lower bounds for ${\mathcal E}(n,M;h)$, where the universality is meant in Levenshtein's sense (bounds hold for all dimensions and cardinalities, cf. \cite{Lev}),
as well as in expressions which are common for a large class of potential functions.
Our bounds are attained for many well known good codes
(e.g. Hamming, Golay, MDS, or Nordstrom-Robinson codes) which are universally optimal in the sense
of \cite{CZ} (see also \cite[Section 6.2, Table 6.4]{Lev}, \cite{Lev92,Lev95}, \cite{AB}). Furthermore,
our bounds
depend on the application of a quadrature rule to the potential, where the nodes and weights of the quadrature rule are independent of the potential. It turns out that this quadrature coincides with the one utilized by Levenshtein in \cite{Lev92} (see also \cite{Lev95,Lev}) to determine his bound on maximal codes (see Subsection 2.4).

In Section 2 we collect the main notions and results that are necessary for the derivation and explanation
of our bounds. Section 3 is devoted to general bounds on the energy of codes and designs in $\mathbb{H}(n,q)$ that are derived
in a unified way from identity \eqref{main}. These results are more or less folklore. In Section 4 we state and prove our
main universal lower bounds for codes and designs in Hamming spaces. Although our bounds are optimal in the
sense that they cannot be improved in a certain wide framework, it is still possible to find better
bounds by linear programming. Section 5 describes three ways for finding such improvements -- using the
discrete structure of the inner products (the set $T_n$), using higher
degree polynomials, or using preliminary information on the structure of codes and designs under consideration.
Examples of upper energy bounds are given in Section 6 where we investigate in detail the case of 2-designs in the
binary case $\mathbb{H}(n,2)$. Section 7 is devoted to asymptotic consequences of our lower bounds in a
natural process when the length $n$ and the cardinality $M$ tend to infinity in certain relation.
In Section 8 we provide some examples.

\section{Preliminaries}

\subsection{Krawtchouk polynomials and the linear programming framework}

For fixed $n$ and $q$, the (normalized) Krawtchouk polynomials are
defined by
\begin{equation}\label{Kraw} Q_i^{(n,q)}(t) :=\frac{1}{r_i} K_i^{(n,q)}(z), \end{equation}
where $z=\frac{n(1-t)}{2}$, $r_i=r_i^{(n)}:=(q-1)^i {n \choose i}$, and
\[ K_i^{(n,q)}(z):=\sum_{j=0}^i (-1)^j(q-1)^{i-j} {z \choose j} {n-z \choose i-j}, \ \ i=0,1,\ldots,n, \]
are the (usual) Krawtchouk polynomials corresponding to $\mathbb{H}(n,q)$. The polynomials $K_i^{(n,q)}(z)$
can be   defined by
\begin{equation}\label{KrawInit}
K_0^{(n,q)}(z)=1, \ \ K_1^{(n,q)}(z)=n(q-1)-qz,
\end{equation} and, for $1 \leq i \leq n-1$, the three-term recurrence relation
\begin{equation}\label{Kraw3term}
 (i+1)K_{i+1}^{(n,q)}(z)=[i+(q-1)(n-i)-qz]K_i^{(n,q)}(z)-(q-1)(n-i+1)K_{i-1}^{(n,q)}(z).
\end{equation}
 The measure of orthogonality for the system $\{ Q_i^{(n,q)}(t) \}_{i=0}^n$ is a discrete measure given by
\begin{equation} \label{KrawOrtho} d\mu_n (t) := q^{-n}\sum_{i=0}^n {n \choose i} (q-1)^i \delta_{t_i},\end{equation}
where $\delta_{t_i} $ is the Dirac-delta measure at $t_i \in T_n$. Note that the form
\begin{equation}\label{InnerProd}
\langle f,g \rangle=\int f(t) g(t) d\mu_n (t)
\end{equation}
defines an inner product over the class $\mathcal{P}_n$ of polynomials of degree less than or equal to $n$.

We also need the so-called {\sl adjacent polynomials} as introduced by Levenshtein (cf. \cite[Section 6.2]{Lev}, see also \cite{Lev92,Lev95})
\begin{eqnarray}
\label{adjacent}
Q_i^{(1,0,n,q)}(t) &=& \frac{K_i^{(n-1,q)}(z-1)}{\sum_{j=0}^i {n \choose j} (q-1)^j}, \\
\label{adjacent2}
Q_i^{(1,1,n,q)}(t) &=& \frac{K_i^{(n-2,q)}(z-1)}{\sum_{j=0}^i {n-1 \choose j} (q-1)^j}, \\
\label{adjacent3}
Q_i^{(0,1,n,q)}(t) &=& \frac{K_i^{(n-1,q)}(z)}{{n-1 \choose i} (q-1)^i},
\end{eqnarray}
where $z=n(1-t)/2$. The corresponding measures of orthogonality are, respectively,
\begin{equation}\label{KrawOrtho2} (1-t)d\mu_n (t), \quad (1-t)(1+t)d\mu_n(t), \quad (1+t)d\mu_n (t).\end{equation}

An important role in our analysis is played by the following analog of spherical harmonics.
Let $V_0$ consist of the constant function $1$ and, for $i=1,\ldots, n$, let $V_i$ consist of the $r_i:=(q-1)^i {n \choose i}$ functions
\begin{equation*}
\begin{split}V_i=\{u(x):\mathbb{H}(n,q) &\to \mathbb{C} \mid u(x)=\xi^{\alpha_1 x_{j_1}+\cdots+\alpha_i x_{j_i}},\\
&1 \leq j_1<\cdots<j_i \leq n, \alpha_1,\ldots,\alpha_i \in \{1,\ldots,q-1\}\},
\end{split}
\end{equation*}
where $\xi$ is a (complex) primitive $q$-th root of unity.  We denote and enumerate the functions in $V_i$ by $Y_{ij}$, $j=1,\ldots, r_i$.
It is easy to verify that $V:= \{Y_{ij}: 0\le i\le n,\,  1\le j\le r_i\}$ is an orthonormal system with respect to the
inner product $\langle u,v\rangle =q^{-n} \sum_{x \in \mathbb{H} (n,q)} u(x) \overline{v(x)}$  (see \cite[Theorem 2.1]{Lev95}).

The following addition formula relates the Krawtchouk polynomials \eqref{Kraw} and the orthonormal systems $V_i$, $i=0,\ldots, n$,
\begin{equation}\label{addformula}
Q_i^{(n,q)}(\langle x,y\rangle)=\frac{1}{r_i}\sum_{j=1}^{r_i} Y_{ij}(x)\overline{Y_{ij}(y)}.
\end{equation}

If $f(t) \in \mathbb{R}[t]$ is a real polynomial of degree $m \leq n$, then
$f(t)$ can be uniquely expanded in terms of the Krawtchouk
polynomials as $f(t) = \sum_{i=0}^m f_i Q_i^{(n,q)}(t)$.
For $C \subset \mathbb{H}(n,q)$, the identity (an easy consequence of \eqref{addformula})
\begin{equation}
  \label{main}
  |C|f(1)+\sum_{x,y\in C, x \neq y} f(\langle x,y\rangle)
      = |C|^2f_0 + \sum_{i=1}^m \frac{f_i}{r_i} \sum_{j=1}^{r_i}
        \left ( \sum_{x\in C} Y_{ij}(x) \right )^2
\end{equation}
serves as a key source of estimations by linear programming (see, for example, \cite[Equation (1.7)]{Lev83}, \cite[Equation (1.20)]{Lev92}, \cite[Equation (26)]{Lev95}). The Rao bound (subsection 2.3) and the Levenshtein bound (subsection 2.4) can be obtained after appropriate
sums on both sides of (\ref{main}) are neglected and suitable polynomials (optimal in some sense)
are applied.

\subsection{Designs in $\mathbb{H}(n,q)$ and their energy}\label{designs}

We also need the notion of designs (see \cite{HSS,Lev}) in $\mathbb{H}(n,q)$,
which play an important role in the understanding of
energy problems in Hamming spaces. The designs in Hamming spaces have been well studied
since they are, in a certain sense, an approximation of the whole space $\mathbb{H}(n,q)$.

We first give a combinatorial definition.

\begin{defn}
Let $\tau$ and $\lambda$ be positive integers.
A $\tau$-design $C \subset \mathbb{H}(n,q)$ of strength $\tau$ and index $\lambda$
is a code $C \subset \mathbb{H}(n,q)$ of cardinality $|C|=M= \lambda q^\tau$
such that the $M\times n$ matrix obtained from the
codewords of $C$ as rows has the following property: every $M \times \tau$ submatrix
contains every element of $\mathbb{H}(\tau,q)$  exactly $\lambda=\frac{M}{q^{\tau}}$ times as rows.
\end{defn}

An equivalent definition (see \cite[Corollary 2.2]{Lev95}) asserts that $C \subset \mathbb{H}(n,q)$ is a $\tau$-design if
and only if
\begin{equation}\label{eqDef}\sum_{x\in C} Y_{ij}(x)=0, \quad\text{ for   $1 \leq i \leq \tau$ and $1 \leq j \leq r_i$.}
\end{equation}
 This reduces the right-hand side of \eqref{main}
to $f_0|C|^2$ for polynomials of degree at most $\tau$ and thus suggests that such polynomials could
be very useful in derivation and investigation of linear programming bounds for designs.

The characterization of codes by their strength as designs started with Delsarte \cite{Del},
where $\tau+1=d^\prime$ is the dual distance of the code $C$ (see also \cite{DL,Lev95,Lev}).


In this paper we obtain bounds for the minimum and maximum possible potential energies
of designs in $\mathbb{H}(n,q)$. We denote by  ${\mathcal L}(n,M,\tau;h)$ and  ${\mathcal U}(n,M,\tau;h)$   the minimum and maximum, respectively,  of the $h$-energy of $M$-point $\tau$-designs in $\mathbb{H}(n,q)$; that is,
\begin{equation}\label{LUdef}
\begin{split} {\mathcal L}(n,M,\tau;h) := \min \{E(n,C;h): |C| = M, C \subset \mathbb{H}(n,q)  \mbox{ is a $\tau$-design}\},   \\
  {\mathcal U}(n,M,\tau;h) := \max \{E(n,C;h): |C| = M, C \subset \mathbb{H}(n,q) \mbox{ is a $\tau$-design}\}.
  \end{split}
  \end{equation}
In the event  there is no $\tau$-design with cardinality $M$ in $\mathbb{H}(n,q)$, we set  ${\mathcal L}(n,M,\tau;h)  =\infty$ and ${\mathcal U}(n,M,\tau;h)  =-\infty$ as is standard for the inf and sup of the empty set.

All the quantities ${\mathcal E}(n,M;h)$, ${\mathcal L}(n,M,\tau;h)$,  and ${\mathcal U}(n,M,\tau;h)$,
 will be estimated by polynomials techniques
(linear programming method) by using suitable
polynomials in \eqref{main}.

\subsection{Rao bound}

For fixed strength $\tau$ and dimension $n$ denote
\[ B(n,\tau) = \min \{|C| : \mbox{$\exists$ $\tau$-design $C \subset \mathbb{H}(n,q)$}\}. \]
The classical universal lower bound on $B(n,\tau)$ is due to Rao \cite{Rao} (see also \cite{HSS,DL,Lev})
\begin{equation}
\label{R-bound}
B(n,\tau) \geq R(n,\tau) := \left\{
  \begin{array}{ll}
    \ds q\sum_{i=0}^{k-1} {n-1 \choose i}(q-1)^i,       & \mbox{if $\tau = 2k-1$,} \\[10pt]
    \ds \sum_{i=0}^k {n \choose i}(q-1)^i, & \mbox{if $\tau = 2k$}.
  \end{array} \right.
\end{equation}

The bound \eqref{R-bound} can be obtained by linear programming using in \eqref{main} the following polynomials of degree $\tau$:
\[ f^{(\tau)}(t)=(t+1)^\varepsilon \left(\sum_{i=0}^k r_i^{(0,1)} Q_i^{(0,1,n,q)}(t) \right)^2, \]
where $\tau=2k-1+\varepsilon$, $\varepsilon \in \{0,1\}$, $r_i^{(0,1)}=(q-1)^i {n-1 \choose i}$ for
$i=0,1,\ldots,n-1$.

We use the Rao bound to indicate which parameters must be chosen in order to obtain
universal lower bounds on ${\mathcal E}(n,M;h)$ and ${\mathcal L}(n,M,\tau;h)$  and upper bounds for ${\mathcal U}(n,M,\tau;h)$. More precisely, for given
length $n$ and cardinality $M$, we find the unique
\[ \tau:=\tau(n,M) \mbox{ \ such that \ } M \in \left(R(n,\tau),R(n,\tau+1)\right]. \]
Then all other necessary parameters come with $n$, $M$ and $\tau$ as shown in Subsection 2.5.

\subsection{Levenshtein bound}

Let
$$A_q(n,s):=\max\{|C| \colon C \subset \mathbb{H}(n,q), \langle x,y \rangle \leq s, \,   x\neq y \in C\}$$
denote the maximal possible cardinality of a code in $\mathbb{H}(n,q)$ of prescribed maximal
inner product $s$. We remark that in Coding Theory this quantity is usually denoted by $A_q(n,d)$, where $s=1-\frac{2d}{n}$,
so we have replaced the condition $d(x,y) \geq d$ by $\langle x,y \rangle \leq s$.

For $a,b \in \{0,1\}$ and $i \in \{ 1,2,\ldots,n-a-b\}$, denote by $t_i^{a,b}$ the greatest zero of the adjacent
polynomial $Q_i^{(a,b,n,q)}(t)$ (see \eqref{adjacent}-\eqref{adjacent3}) and also define $t_0^{1,1}=-1$. We have the interlacing properties
$t_{k-1}^{1,1}<t_k^{1,0}<t_k^{1,1}$, see \cite[Lemmas 5.29, 5.30]{Lev}. For a positive integer $\tau$, let
$\mathcal{I}_\tau$ denote the interval
\begin{eqnarray*}
  \mathcal{I}_\tau :=
\left\{
\begin{array}{ll}
    \left [ t_{k-1}^{1,1},t_k^{1,0} \right ], & \mbox{if } \tau=2k-1, \\[6pt]
    \left [ t_k^{1,0},t_k^{1,1} \right ],      & \mbox{if } \tau=2k. \\
  \end{array}\right.
\end{eqnarray*}
Then the intervals $\mathcal{I}_\tau$ are well defined and
partition $\mathcal{I}=[-1,1)$ into subintervals with non-overlapping interiors.

For every $\tau$ and $s \in \mathcal{I}_\tau$, Levenshtein obtained
the `linear programming' bound (see \cite[Equation (6.45) and (6.46)]{Lev})
\begin{equation}
\label{L_bnd}
 A_q(n,s) \leq
\left\{
\begin{array}{ll}
    L_{2k-1}(n,s) =
         \left(1 - \frac{Q_{k-1}^{(1,0,n,q)}(s)}{Q_k^{(n,q)}(s)}\right)\sum\limits_{j=0}^{k-1} {n \choose j}(q-1)^{j},
          &\mbox{if }\tau={2k-1},\\[10pt]
    L_{2k}(n,s) =
        q\left(1 - \frac{Q_{k-1}^{(1,1,n,q)}(s)}{Q_k^{(0,1,n,q)}(s)}\right)\sum\limits_{j=0}^{k-1} {n-1 \choose j}(q-1)^{j},
        &\mbox{if } \tau={2k}.\cr
   \end{array}\right.
\end{equation}
using \eqref{main} with
the following polynomials of degree $\tau=2k-1+\varepsilon$, $\varepsilon \in \{0,1\}$,
\begin{equation}
\label{lev_poly}
f_\tau^{(n,s)}(t)=(t-s)(t+1)^\varepsilon \left(\sum_{i=0}^{k-1} r_i^{(1,\varepsilon)}
Q_i^{(1,\varepsilon,n,q)}(t) Q_i^{(1,\varepsilon,n,q)}(s)\right)^2
\end{equation}
(see \cite[Equations (5.81) and (5.82)]{Lev}), where
\[ r_i^{(1,\varepsilon)}=\left(\sum_{j=0}^i {n-\varepsilon \choose j} (q-1)^j \right)^2/(q-1)^i {n-1-\varepsilon \choose i}. \]

An important connection between the Rao (\ref{R-bound}) and the
Levenshtein  (\ref{L_bnd}) bounds is given by the equalities
\begin{equation}\label{L-DGS1}
\begin{split}
L_{2k-2}(n,t_{k-1}^{1,1})&=
L_{2k-1}(n,t_{k-1}^{1,1}) = R(n,2k-1),\\
  L_{2k-1}(n,t_k^{1,0})&=
L_{2k}(n,t_k^{1,0}) = R(n,2k)
\end{split}
\end{equation}
at the ends of the intervals $\mathcal{I}_\tau$. The relations \eqref{L-DGS1} explain and justify our
connection between the cardinality $M$ and the strength (degree) $\tau=\tau(n,M)$.

\subsection{Useful quadrature}
\label{quadSec}
Levenshtein \cite{Lev92} proves (see also \cite[Section 5, Theorem 5.39]{Lev} and \cite{Lev95}) that
for every fixed (cardinality) $M > R(n,2k-1)$ there exist
uniquely determined real numbers $-1 < \alpha_0 < \alpha_1 <
\cdots <\alpha_{k-1} < 1$ and positive numbers $\rho_0,\rho_1,\ldots,\rho_{k-1}$,
such that the equality
\begin{equation}
\label{quad_f0_alfi}
f_0= \frac{f(1)}{M}+ \sum_{i=0}^{k-1} \rho_i f(\alpha_i)
\end{equation}
holds for every real polynomial $f(t)$ of degree at most $2k-1$.

The numbers $\alpha_i$, $i=0,1,\ldots,k-1$, are the roots of the
equation
\begin{equation}
\label{alfi}
P_k(t)P_{k-1}(\alpha_{k-1}) - P_k(\alpha_{k-1})P_{k-1}(t)=0,
\end{equation}
where $P_i(t)=Q_i^{(1,0,n,q)}(t)$. In fact, $\alpha_i$, $i=0,1,\ldots,k-1$,
are also the roots of the polynomial $f_{2k-1}^{(n,s)}(t)$ from \eqref{lev_poly}
(see \cite{Lev92,Lev}). In our approach, it is convenient to
find $\alpha_{k-1}=s$ from the equation $M=L_{2k-1}(n,s)$ and to solve then \eqref{alfi}.

Similarly, for every fixed (cardinality) $M > R(n,2k)$ there exist
uniquely determined real numbers $-1=\beta_0 < \beta_1 < \cdots <\beta_k < 1$
and positive numbers $\gamma_0,\gamma_1,\ldots,\gamma_k$,
such that the equality
\begin{equation}
\label{quad_f0_beti}
f_0= \frac{f(1)}{M}+ \sum_{i=0}^{k} \gamma_i f(\beta_i)
\end{equation}
holds for every real polynomial $f(t)$ of degree at most $2k$.
The numbers $\beta_i$, $i=1,\ldots,k$, are the roots of the
equation
\begin{equation}
\label{beti}
P_k(t)P_{k-1}(\beta_k) - P_k(\beta_k)P_{k-1}(t)=0,
\end{equation}
where $P_i(t)=Q_i^{(1,1,n,q)}(t)$ and also roots of the polynomial $f_{2k}^{(n,s)}(t)$
from \eqref{lev_poly} (see \cite{Lev92,Lev}). Similarly to the odd case, $\beta_k=s$ can be found from the equation $M=L_{2k}(n,s)$
and then \eqref{beti} can be solved.

As mentioned in the end of subsection 2.3 we always take care where the cardinality $M$ is located with respect to the
Rao bound. We actually associate $M$ with the corresponding numbers:
\[ \alpha_0,\alpha_1,\ldots,\alpha_{k-1},\rho_0,\rho_1,\ldots,\rho_{k-1} \mbox{ when } M=L_{2k-1}(n,s) \in (R(n,2k-1),R(n,2k)] \]
or, analogously, with the corresponding
\[ \beta_0,\beta_1,\ldots,\beta_{k},\gamma_0,\gamma_1,\ldots,\gamma_{k} \mbox{ when } M=L_{2k}(n,s) \in (R(n,2k),R(n,2k+1)]. \]
Then the quadratures \eqref{quad_f0_alfi} and \eqref{quad_f0_beti} can be applied for deriving and calculation of our bounds.

We also use the kernels (see (2.69) in \cite[Section 2]{Lev}; also Section 5 in \cite{Lev})
\begin{equation}
T_k(u,v)=\sum_{i=0}^k r_i Q_i^{(n,q)}(u)Q_i^{(n,q)}(v)=c \cdot \frac{Q_{k+1}^{(n,q)}(u)Q_k^{(n,q)}(v)-Q_{k+1}^{(n,q)}(v)Q_k^{(n,q)}(u)}{u-v}
\end{equation}
($c$ is a positive constant, $u \neq v$, this is in fact the Christoffel-Darboux formula). Note that the $(1,0)$ and $(1,1)$
analogs of $T_k(u,v)$ define the Levenshtein polynomials -- see \eqref{lev_poly}.

\subsection{Bounds for the extreme inner products of designs in $\mathbb{H}(n,q)$}

Setting $s(C):=\max \{\langle x,y \rangle : x,y \in C, x \neq y\}$ and
$\ell(C):=\min \{\langle x,y \rangle : x,y \in C, x \neq y\}$, we define the quantities
\begin{equation}\label{slDef}
\begin{split}
 s(n,M,\tau)&:=\max \{s(C):C \subset \mathbb{H}(n,q) \mbox{ is a $\tau$-design}, |C|=M\},  \\
 \ell(n,M,\tau)&:=\min \{\ell(C):C \subset \mathbb{H}(n,q) \mbox{ is a $\tau$-design}, |C|=M\},
 \end{split}
 \end{equation}
  with the previously adopted convention regarding  the case when no $M$-point $\tau$-design exists.

 The following equivalent  definition of designs in $\mathbb{H}(n,q)$ is useful in obtaining bounds for the above quantities;
 see Subsection 5.3 and Section 6.

\begin{defn}
\label{def_des_alg}
A $\tau$-design $C \subset \mathbb{H}(n,q)$ is a code such that
the equality
\begin{equation}
\label{defin_f.3} \sum_{y \in C} f(\langle x,y \rangle ) = f_0|C|
\end{equation}
holds for any point $x \in \mathbb{H}(n,q)$ and any real
polynomial $f(t)=\sum_{i=0}^r f_iQ_i^{(n,q)}(t)$ of degree $r \leq \tau$.
\end{defn}

\section{General linear programming bounds for ${\mathcal E}(n,M;h)$,
${\mathcal L}(n,M,\tau;h)$,  and ${\mathcal U}(n,M,\tau;h)$}

In this section we state some immediate consequences of \eqref{main} and \eqref{eqDef}.

We begin with the  following analog for Hamming spaces of Yudin's well-known lower bound  for the energy of spherical codes \cite{Yud}; see also \cite[Proposition 5]{CZ}.

\begin{theorem}\label{thm 1}
Let $n$ be a positive integer and $h$ be an absolutely monotone function on $[-1,1)$.  If $f$ is a real polynomial satisfying

{\rm (A1)} $f(t) \leq h(t)$ for every $t \in T_n$, and

{\rm (A2)} the coefficients in the Krawtchouk expansion $f(t) = \sum_{i=0}^{n} f_i Q_i^{(n,q)}(t)$
satisfy $f_i \geq 0$ for every $i \geq 1$,
then  \begin{equation}{\mathcal E}(n,M;h) \geq f_0M-f(1) \qquad \text{for every $M\ge 2$.}\end{equation}
\end{theorem}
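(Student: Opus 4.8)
The strategy is to apply the fundamental identity \eqref{main} to the polynomial $f$ and to a code $C$ that attains the minimal energy ${\mathcal E}(n,M;h)$ (or, if the minimum is not attained, an arbitrary code of cardinality $M$; in any case it suffices to prove the bound for every fixed code $C$ with $|C|=M\ge 2$). First I would write down \eqref{main} for our $f$ and $C$:
\[
|C|f(1)+\sum_{x,y\in C,\, x\neq y} f(\langle x,y\rangle)
= |C|^2 f_0 + \sum_{i=1}^{n}\frac{f_i}{r_i}\sum_{j=1}^{r_i}\Bigl(\sum_{x\in C}Y_{ij}(x)\Bigr)^2 .
\]

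\textbf{Key steps.} The right-hand side is bounded below by $|C|^2 f_0$: each inner sum $\bigl(\sum_{x\in C}Y_{ij}(x)\bigr)^2$ is a square of a real number hence nonnegative, and by hypothesis (A2) every coefficient $f_i$ with $i\ge 1$ is nonnegative, so the entire double sum is $\ge 0$. (One should note that $f$ has real coefficients and the $V_i$ are closed under complex conjugation — as in the addition formula \eqref{addformula} — so the quantities being squared are indeed real, or alternatively interpret the sum via \eqref{main} directly, which already delivers a real expression.) Hence
\[
|C|f(1)+\sum_{x,y\in C,\, x\neq y} f(\langle x,y\rangle)\ \ge\ |C|^2 f_0 .
\]
Next I use (A1): for all $x\neq y$ in $C$ we have $\langle x,y\rangle\in T_n$, so $f(\langle x,y\rangle)\le h(\langle x,y\rangle)$, giving
\[
\sum_{x,y\in C,\, x\neq y} f(\langle x,y\rangle)\ \le\ \sum_{x,y\in C,\, x\neq y} h(\langle x,y\rangle)\ =\ |C|\, E(n,C;h).
\]
Combining the two displays, $|C|f(1)+|C|\,E(n,C;h)\ge |C|^2 f_0$, and dividing by $|C|=M>0$ yields $E(n,C;h)\ge f_0 M - f(1)$. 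Taking the minimum over all $C$ with $|C|=M$ gives ${\mathcal E}(n,M;h)\ge f_0 M - f(1)$.

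\textbf{Main obstacle.} There is no serious analytic obstacle here — this is essentially the linear programming (Delsarte–Yudin) bound transcribed to $\mathbb{H}(n,q)$. The one point requiring a little care is the reality/positivity bookkeeping on the right-hand side of \eqref{main}: one must make sure that, although the $Y_{ij}$ are complex-valued, the grouped sums $\sum_{i\ge 1}\frac{f_i}{r_i}\sum_j\bigl(\sum_{x\in C}Y_{ij}(x)\bigr)^2$ are real and nonnegative. This follows because for real $f$ the identity \eqref{main} forces the right-hand side to be real, and within each level $i$ the functions $Y_{ij}$ occur in conjugate pairs (replacing each $\alpha_\ell$ by $q-\alpha_\ell$), so the contribution of each level is $\frac{f_i}{r_i}\sum_j \bigl|\sum_{x\in C}Y_{ij}(x)\bigr|^2\ge 0$ by (A2). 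Once this is observed, the remaining steps are the routine inequalities above. Finally, one should remark that the hypothesis $M\ge 2$ is needed only so that the sum over $x\neq y$ in the definition of $E(n,C;h)$ is over a nonempty index set and the statement is non-vacuous; the inequality itself holds formally for $M=1$ as well with the empty-sum convention.
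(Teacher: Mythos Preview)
Your proof is correct and is exactly the argument the paper has in mind: the paper does not spell out a proof of this theorem, noting only that it is ``more or less folklore'' and an ``immediate consequence of \eqref{main},'' and your derivation from \eqref{main} via (A1) and (A2) is the standard one. Your remark that the squared terms on the right of \eqref{main} should be read as $\bigl|\sum_{x\in C}Y_{ij}(x)\bigr|^2$ (so that nonnegativity is clear) is a useful clarification of the paper's slightly informal notation.
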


The $\tau$-design  property allows relaxation of the conditions on the coefficients in the Krawtchouk
expansion.   In the following two theorems the reader should note that the conclusions hold trivially when there are no   $M$-point $\tau$-designs
in $\mathbb{H}(n,q)$;  see remark following \eqref{LUdef} regarding the max and min for the empty set.

\begin{theorem}
\label{thm 2}
Let $n$  and $h$ be as in Theorem~\ref{thm 1}.  If $\tau$ is a  positive integer    and $f$ is a real polynomial
 that satisfies {\rm (A1)} and

{\rm (A2$^\prime$)} the coefficients in the Krawtchouk expansion $f(t) = \sum_{i=0}^{n} f_i Q_i^{(n,q)}(t)$
satisfy $f_i \geq 0$ for every $i \geq \tau+1$,
then
\begin{equation}
{\mathcal L}(n,M,\tau;h) \geq f_0M-f(1) .
\end{equation}
\end{theorem}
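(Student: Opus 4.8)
The plan is to reprise the linear-programming argument behind Theorem~\ref{thm 1}, but to use the design hypothesis to annihilate precisely the Krawtchouk terms of index $1,\dots,\tau$ on the right-hand side of \eqref{main}, so that only the coefficients $f_i$ with $i\ge\tau+1$ need to be sign-controlled. First observe that if $\mathbb{H}(n,q)$ contains no $M$-point $\tau$-design, then ${\mathcal L}(n,M,\tau;h)=\infty$ by our convention and the assertion is vacuous; so fix an arbitrary $\tau$-design $C\subset\mathbb{H}(n,q)$ with $|C|=M$.

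Apply the identity \eqref{main} to the polynomial $f$ and to this $C$. By the equivalent characterization \eqref{eqDef} of $\tau$-designs, $\sum_{x\in C}Y_{ij}(x)=0$ for all $1\le i\le\tau$ and $1\le j\le r_i$; hence every summand with $1\le i\le\tau$ on the right of \eqref{main} vanishes, and what remains is
\[
M f(1)+\sum_{x,y\in C,\,x\neq y} f(\langle x,y\rangle)= M^2 f_0 + \sum_{i=\tau+1}^{m}\frac{f_i}{r_i}\sum_{j=1}^{r_i}\Bigl(\sum_{x\in C}Y_{ij}(x)\Bigr)^2 ,
\]
where $m=\deg f\le n$. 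Hypothesis (A2$^\prime$) gives $f_i\ge0$ for every $i\ge\tau+1$, and each inner square is nonnegative, so the last double sum is $\ge0$; therefore $M f(1)+\sum_{x\neq y}f(\langle x,y\rangle)\ge M^2 f_0$.

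Now invoke (A1) together with the fact that every pairwise inner product of codewords lies in the discrete set $T_n$: since $f(t)\le h(t)$ for all $t\in T_n$,
\[
\sum_{x,y\in C,\,x\neq y} f(\langle x,y\rangle)\le \sum_{x,y\in C,\,x\neq y} h(\langle x,y\rangle)= M\,E(n,C;h).
\]
Combining the last two displays yields $M f(1)+M\,E(n,C;h)\ge M^2 f_0$, i.e.\ $E(n,C;h)\ge M f_0 - f(1)$; taking the minimum over all $M$-point $\tau$-designs $C$ gives ${\mathcal L}(n,M,\tau;h)\ge f_0 M - f(1)$.

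I do not expect a genuine obstacle: the result is folklore, flowing directly from \eqref{main} and \eqref{eqDef}. The only points deserving care are bookkeeping of the inequality directions (which is why (A1) asks for $f\le h$ and (A2$^\prime$) for \emph{nonnegative} high-index coefficients), the remark that all $\langle x,y\rangle$ with $x,y\in C$ truly belong to $T_n$ so that (A1) can be applied term by term, and the vacuous empty-design case. Absolute monotonicity of $h$ is not needed for this step; it becomes relevant only later, when one must exhibit a good polynomial $f$ meeting (A1) and (A2$^\prime$).
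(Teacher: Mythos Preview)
Your argument is correct and is precisely the standard linear-programming derivation the paper has in mind: the paper does not spell out a proof of Theorem~\ref{thm 2} but declares it an ``immediate consequence'' of \eqref{main} and \eqref{eqDef}, and your write-up is exactly that consequence. One cosmetic remark: the inner sums $\sum_{x\in C}Y_{ij}(x)$ are complex for $q\ge 3$, so the squared term in \eqref{main} should be read as a squared modulus; this does not affect your nonnegativity argument.
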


Denote by $A_{n,M;h}$ (respectively $A_{n,M,\tau;h}$) the set of polynomials that satisfy
the conditions (A1) and (A2) (respectively (A1) and (A2$^\prime$)).

One similarly obtains general upper bounds for ${\mathcal U}(n,M,\tau;h)$.

\begin{theorem}\label{thm 4}
Let $n$   and $h$ be as in Theorem~\ref{thm 1}.  If $\tau$ and $M$ are positive integers and $g$ is a real polynomial satisfying

{\rm (B1)} $g(t) \geq h(t)$ for every $t \in T_n \cap [\ell(n,M,\tau),s(n,M,\tau)]$ and

{\rm (B2)} the coefficients in the Krawtchouk expansion $g(t) = \sum_{i=0}^{n} g_i Q_i^{(n,q)}(t)$
satisfy $g_i \leq 0$ for $i \geq \tau+1$,
then
\begin{equation}
{\mathcal U}(n,M,\tau;h) \leq g_0M-g(1).\end{equation}
\end{theorem}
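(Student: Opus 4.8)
The plan is to mimic the proofs of Theorems~\ref{thm 1} and~\ref{thm 2}, but now with the inequalities reversed and with the estimate restricted to the range of inner products that can actually occur in an $M$-point $\tau$-design. Fix a $\tau$-design $C\subset\mathbb{H}(n,q)$ with $|C|=M$ (if none exists, the statement is vacuous by the convention for $\sup\emptyset$). Expand $g$ in Krawtchouk polynomials, $g(t)=\sum_{i=0}^n g_i Q_i^{(n,q)}(t)$, and apply the fundamental identity \eqref{main} to $g$ and $C$.

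The next step is to bound each side of \eqref{main}. On the right-hand side, the design property \eqref{eqDef} kills every term with $1\le i\le\tau$, so the remaining sum is $\sum_{i=\tau+1}^m \frac{g_i}{r_i}\sum_{j=1}^{r_i}\bigl(\sum_{x\in C}Y_{ij}(x)\bigr)^2$; since each inner square is nonnegative and, by (B2), $g_i\le 0$ for $i\ge\tau+1$, this whole sum is $\le 0$. Hence the right-hand side of \eqref{main} is at most $g_0 M^2$. On the left-hand side, for every pair $x\ne y$ in $C$ we have $\langle x,y\rangle\in T_n\cap[\ell(C),s(C)]\subseteq T_n\cap[\ell(n,M,\tau),s(n,M,\tau)]$, so by (B1), $g(\langle x,y\rangle)\ge h(\langle x,y\rangle)$. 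Therefore
\begin{equation*}
g(1)M + \sum_{x,y\in C,\,x\ne y} h(\langle x,y\rangle) \le g(1)M + \sum_{x,y\in C,\,x\ne y} g(\langle x,y\rangle) \le g_0 M^2.
\end{equation*}
Dividing by $M$ and recalling $E(n,C;h)=\frac{1}{M}\sum_{x\ne y}h(\langle x,y\rangle)$ gives $E(n,C;h)\le g_0 M - g(1)$. Since $C$ was an arbitrary $M$-point $\tau$-design, taking the maximum over all such $C$ yields ${\mathcal U}(n,M,\tau;h)\le g_0 M-g(1)$.

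The only subtle point — the step I would flag as the "main obstacle," though it is really a matter of bookkeeping rather than difficulty — is that the crucial direction of the inequality chain now reads $h\le g$ on the left but $g_i\le 0$ on the right, which is the exact reversal of the situation in Theorems~\ref{thm 1}–\ref{thm 2}; one must be careful that (B1) is only assumed on the restricted set $T_n\cap[\ell(n,M,\tau),s(n,M,\tau)]$, and this is exactly enough because the inner products realized in any $M$-point $\tau$-design lie in that set by the definitions \eqref{slDef}. Note also that for $i>m=\deg g$ the coefficients $g_i$ vanish, so (B2) need only be checked for $\tau+1\le i\le m$; and the argument does not require absolute monotonicity of $h$ at all (it is inherited from the hypotheses only for consistency with the other theorems), since we use merely the pointwise bound (B1) on a finite set. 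This completes the proof.
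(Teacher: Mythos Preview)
Your proof is correct and is exactly the standard linear-programming argument the paper has in mind: the paper does not write out a proof of Theorem~\ref{thm 4} but presents it as an ``immediate consequence'' of the identity \eqref{main} and the design property \eqref{eqDef}, which is precisely what you have unpacked. One cosmetic remark: in \eqref{main} the quantities $\bigl(\sum_{x\in C}Y_{ij}(x)\bigr)^2$ should be read as squared moduli (the $Y_{ij}$ are complex for $q\ge 3$), which is what makes them nonnegative; your ``each inner square is nonnegative'' is correct under this reading.
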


Denote by $B_{n,M,\tau;h}$ the set of polynomials satisfying the conditions
(B1) and (B2).

Theorems \ref{thm 1}--\ref{thm 4} suggest the following optimization problems.

\begin{problem}\label{prb 1}
Find polynomial(s) $f \in A_{n,M;h}$ ($f \in A_{n,M,\tau;h}$ respectively)
that give the maximum value of $f_0M-f(1)=f_0(M-1)-(f_1+\cdots+f_{n}).$
\end{problem}

\begin{problem}\label{prb 2}
Find polynomial(s) $g \in B_{n,M,\tau;h}$
that give the minimum value of $g_0M-g(1)=g_0(M-1)-(g_1+\cdots+g_{n}).$
\end{problem}

Another general problem asks for finding universally optimal codes \cite{ABL,CZ}.
Such codes attain the minimum possible energy with respect to all absolute monotone potential
functions simultaneously.

\section{Universal lower bounds for ${\mathcal L}(n,M,\tau;h)$ and ${\mathcal E}(n,M;h)$}
The following theorem is an immediate corollary of Theorem~\ref{thm 7}.  We
 include the proof of this theorem since it is simpler  and will be referenced in the proof of Theorem~\ref{thm 7}.

\begin{theorem}
\label{thm 6}
If  $n$ and $\tau$ are positive integers and $h$ is absolutely monotone on $[-1,1)$, then
\begin{equation}
\label{bound_odd_designs}
{\mathcal L}(n,M,\tau;h) \geq \begin{cases}
\ds M\sum_{i=0}^{k-1} \rho_i h(\alpha_i),& \tau=2k-1, \ \forall M \in \left(R(n,2k-1),R(n,2k)\right], \\[10pt]
\ds  M\sum_{i=0}^{k} \gamma_i h(\beta_i),& \tau=2k, \ \forall M \in \left(R(n,2k),R(n,2k+1)\right].
\end{cases}
\end{equation}
Moreover, if ${\mathcal L}(n,M,\tau;h)<\infty$, the bounds \eqref{bound_odd_designs} cannot be improved by utilizing polynomials $f$ of degree at most  $\tau$
satisfying $f(t) \leq h(t)$ for every $t \in [-1,1)$; that is, the right-hand side of \eqref{bound_odd_designs} equals
the maximum value of   $f_0 M-f(1)$ over all such polynomials.

\end{theorem}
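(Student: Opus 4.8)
The plan is to prove Theorem~\ref{thm 6} in two parts: first the lower bound for ${\mathcal L}(n,M,\tau;h)$, then the optimality (non-improvability) statement. For the lower bound, I would exhibit an explicit polynomial $f$ of degree $\tau$ lying in the admissible class $A_{n,M,\tau;h}$ of Theorem~\ref{thm 2} and show that the resulting bound $f_0M-f(1)$ equals the right-hand side of \eqref{bound_odd_designs}. The natural candidate is the Hermite-type interpolant of $h$ at the quadrature nodes: for $\tau=2k-1$, take $f$ to be the polynomial of degree $2k-1$ that interpolates $h$ at the $k$ nodes $\alpha_0,\dots,\alpha_{k-1}$ in the sense $f(\alpha_i)=h(\alpha_i)$ and $f'(\alpha_i)=h'(\alpha_i)$ for $i=0,\dots,k-1$ (with the obvious modification for $\tau=2k$, where $\beta_0=-1$ is a simple node and $\beta_1,\dots,\beta_k$ are double nodes, giving degree $2k$). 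The standard error formula for Hermite interpolation gives $h(t)-f(t)=\frac{h^{(2k)}(\xi_t)}{(2k)!}\prod_{i}(t-\alpha_i)^2\ge 0$ for all $t\in[-1,1)$, using that $h$ is absolutely monotone (so $h^{(2k)}\ge 0$); this verifies (A1) in the strong form $f\le h$ on all of $[-1,1)$. Then applying the quadrature \eqref{quad_f0_alfi} (resp.\ \eqref{quad_f0_beti}) to $f$, which has degree $\le\tau$, yields $f_0=\frac{f(1)}{M}+\sum_i\rho_i f(\alpha_i)=\frac{f(1)}{M}+\sum_i\rho_i h(\alpha_i)$, hence $f_0M-f(1)=M\sum_i\rho_i h(\alpha_i)$, which is exactly the claimed bound.

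The one genuine gap in the above is verifying condition (A2$'$): that the Krawtchouk coefficients $f_i$ of the Hermite interpolant are nonnegative for $i\ge\tau+1$. But since $\deg f\le\tau$, those coefficients simply vanish, so (A2$'$) holds trivially. Thus $f\in A_{n,M,\tau;h}$ and Theorem~\ref{thm 2} delivers \eqref{bound_odd_designs}. I would present the odd case $\tau=2k-1$ in full and indicate that the even case $\tau=2k$ is identical, replacing $P_i=Q_i^{(1,0,n,q)}$ by $Q_i^{(1,1,n,q)}$, the $\alpha$'s and $\rho$'s by the $\beta$'s and $\gamma$'s, and noting that $\beta_0=-1$ contributes the term $\gamma_0 h(-1)$ with $-1$ a simple interpolation node (so the product $(t+1)\prod_{i\ge 1}(t-\beta_i)^2$ is still nonnegative on $[-1,1)$ because $t+1\ge 0$ there). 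One should also record why $\alpha_i,\beta_i\in[-1,1)$ so that $h$ is defined at these nodes — this follows from the location results quoted in Subsection~2.5, where the $\alpha_i$ are roots of the Levenshtein polynomial lying in $(-1,1)$ and the $\beta_i$ in $[-1,1)$.

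For the optimality claim, suppose $g$ is any real polynomial of degree at most $\tau$ with $g(t)\le h(t)$ for all $t\in[-1,1)$; I must show $g_0M-g(1)\le M\sum_i\rho_i h(\alpha_i)$ (odd case). Apply the quadrature \eqref{quad_f0_alfi} to $g$, which is legitimate since $\deg g\le 2k-1$: this gives $g_0=\frac{g(1)}{M}+\sum_{i=0}^{k-1}\rho_i g(\alpha_i)$, hence $g_0M-g(1)=M\sum_i\rho_i g(\alpha_i)\le M\sum_i\rho_i h(\alpha_i)$ because each $\rho_i>0$ and $g(\alpha_i)\le h(\alpha_i)$. Since the Hermite interpolant $f$ constructed above attains equality ($f(\alpha_i)=h(\alpha_i)$), the right-hand side of \eqref{bound_odd_designs} is exactly the supremum of $g_0M-g(1)$ over this class, proving non-improvability. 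The even case is the same using \eqref{quad_f0_beti} and the positivity of the $\gamma_i$.

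I expect the main obstacle to be purely expository rather than mathematical: one must be careful about the interplay between the quadrature formulas \eqref{quad_f0_alfi}, \eqref{quad_f0_beti} and the correct association of $M$ with $\tau$ (via the Rao-bound ranges), and about justifying that the nodes lie in $[-1,1)$ so that evaluating $h$ and its derivatives there is legitimate — in particular checking that $\alpha_{k-1}=s<1$ and, in the even case, that $\beta_0=-1$ is handled as a boundary point where $h$ is still defined by hypothesis. A secondary subtlety is that one should note the existence and uniqueness of the Hermite interpolant of a given degree through the prescribed (possibly double) nodes, which is standard since the nodes are distinct. No hard estimates are needed; absolute monotonicity of $h$ enters only through the sign of a single high-order derivative in the interpolation remainder.
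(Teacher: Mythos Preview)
Your proposal is correct and follows essentially the same approach as the paper: the paper also takes the Hermite interpolant of $h$ at the quadrature nodes (double at $\alpha_0,\dots,\alpha_{k-1}$ for $\tau=2k-1$; simple at $\beta_0=-1$ and double at $\beta_1,\dots,\beta_k$ for $\tau=2k$), invokes Rolle's Theorem for (A1) (your use of the explicit Hermite remainder is the same argument made more concrete), notes (A2$'$) is vacuous since $\deg f\le\tau$, and computes the bound via the quadrature \eqref{quad_f0_alfi}--\eqref{quad_f0_beti}. The optimality argument is likewise identical: apply the quadrature to any competitor $F$ of degree $\le\tau$ with $F\le h$ and use $\rho_i>0$, $F(\alpha_i)\le h(\alpha_i)$.
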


\begin{proof}
Let $\tau=2k-1$ and the polynomial $f(t)$ be the Hermite interpolant of $h(t)$ at the points $\alpha_0,\alpha_1,\ldots,\alpha_{k-1}$,
i.e. $f(\alpha_i)=h(\alpha_i)$ and $f^\prime(\alpha_i)=h^\prime(\alpha_i)$ for every $i=0,1,\ldots,k-1$.
Then $\deg(f) \leq 2k-1$ and the condition (A2)$^\prime$ is trivially satisfied. Furthermore, it follows from Rolle's Theorem
that $f(t) \leq h(t)$ for every $t \in [-1,1)$. Therefore (A1) is also satisfied and $f(t) \in A_{n,M,2k-1;h}$.
We calculate the bound by using the quadrature formula \eqref{quad_f0_alfi}:
\[ f_0 = \frac{f(1)}{M}+ \sum_{i=0}^{k-1} \rho_i f(\alpha_i) \iff f_0M-f(1)=M\sum_{i=0}^{k-1} \rho_i f(\alpha_i) \]
and the last equality implies $f_0M-f(1) = M \sum_{i=0}^{k-1} \rho_i h(\alpha_i)$
since $f(\alpha_i) =h(\alpha_i)$ from the interpolation.

Furthermore, for any polynomial $F(t)$ of degree at most $2k-1$ satisfying $F(t) \leq h(t)$ for every $t \in [-1,1)$,
we have from the quadrature formula (\ref{quad_f0_alfi}) for $f(t)$ and for $F(t)$
\[ f_0M-f(1)=M\sum_{i=0}^{k-1} \rho_i f(\alpha_i)=M\sum_{i=0}^{k-1} \rho_i h(\alpha_i) \geq M\sum_{i=0}^{k-1} \rho_i F(\alpha_i)=F_0M-F(1),\]
which proves the optimality property of $f(t)$.

The case $\tau=2k$ is similar, with single intersection $f(\beta_0)=h(\beta_0)$
of the graphs of $f(t)$ and $h(t)$ at the point $\beta_0=-1$
and $f(\beta_i)=h(\beta_i)$ and $f^\prime(\beta_i)=h^\prime(\beta_i)$ for every $i=1,\ldots,k$.
Now the degree of $f(t)$ is at most $2k$ and again (A2)$^\prime$ is trivially satisfied
and (A1) follows from Rolle's Theorem.
\end{proof}

The Hermite interpolant of $h(t)$ at the $\{ \alpha_i \}$ nodes can be also utilized for
obtaining the same bounds for $\mathcal{E}(n,M;h)$. The proof of the positive-definiteness of
these Hermite interpolants (i.e. the condition (A2)) follows the framework of \cite[Section 3]{CK},
applied to discrete orthogonal polynomials.

\begin{theorem}\label{thm 7}
If  $n$ is a positive integer and $h$ is absolutely monotone on $[-1,1)$, then
\begin{equation}
\label{bound_codes_odd}
{\mathcal E}(n,M;h) \geq
\begin{cases}
\ds M\sum_{i=0}^{k-1} \rho_i h(\alpha_i),& \forall M \in \left(R(n,2k-1),R(n,2k)\right], \\[10pt]
\ds  M\sum_{i=0}^{k} \gamma_i h(\beta_i),& \forall M \in \left(R(n,2k),R(n,2k+1)\right].
\end{cases}
\end{equation}
Moreover, the bounds \eqref{bound_codes_odd} cannot be improved by utilizing polynomials $f$ of degree at most  $\tau$
satisfying $f(t) \leq h(t)$ for every $t \in [-1,1)$.
\end{theorem}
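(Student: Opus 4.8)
The plan is to repeat the proof of Theorem~\ref{thm 6} with the \emph{same} interpolating polynomial and to strengthen its consequence from $\mathcal L(n,M,\tau;h)$ to $\mathcal E(n,M;h)$ by verifying the full positivity condition (A2) in place of the weaker (A2$^\prime$). Concretely, let $f$ be the Hermite interpolant of $h$ used in the proof of Theorem~\ref{thm 6}: for $\tau=2k-1$, the polynomial of degree $\le 2k-1$ with $f(\alpha_i)=h(\alpha_i)$ and $f^\prime(\alpha_i)=h^\prime(\alpha_i)$, $i=0,\dots,k-1$; for $\tau=2k$, the polynomial of degree $\le 2k$ interpolating $h$ at $\beta_0=-1$ and osculating $h$ at $\beta_1,\dots,\beta_k$. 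From that proof we already have $\deg f\le\tau$, the inequality $f(t)\le h(t)$ on $[-1,1)$ (hence (A1)), and — applying the quadrature \eqref{quad_f0_alfi} (resp.\ \eqref{quad_f0_beti}) to $f$ and using the interpolation conditions — that $f_0M-f(1)$ equals the right-hand side of \eqref{bound_codes_odd}. So the entire new content is to show that $f$ also satisfies (A2), i.e.\ $f_i\ge0$ for every $i\ge1$ (equivalently, since $\deg f\le\tau$, for $i=1,\dots,\tau$). Granting this, $f\in A_{n,M;h}$ and Theorem~\ref{thm 1} yields $\mathcal E(n,M;h)\ge f_0M-f(1)$, the desired bound; the optimality assertion is then proved word for word as in Theorem~\ref{thm 6}, since for any polynomial $F$ with $\deg F\le\tau$ and $F\le h$ on $[-1,1)$ the quadrature and the positivity of its weights give $F_0M-F(1)=M\sum\rho_iF(\alpha_i)\le M\sum\rho_ih(\alpha_i)=f_0M-f(1)$.

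To obtain (A2) I would first reduce to monomials. By a classical theorem of Bernstein an absolutely monotone $h$ on $[-1,1)$ is analytic there with $h(t)=\sum_{j\ge0}c_j(1+t)^j$, $c_j=h^{(j)}(-1)/j!\ge0$, the series and its termwise derivative converging uniformly on each $[-1,1-\delta]$. Choosing $\delta>0$ so small that all interpolation nodes lie in $[-1,1-\delta]$, and using that Hermite interpolation is a linear continuous map from $C^1([-1,1-\delta])$ to the finite-dimensional space of polynomials of degree $\le\tau$, one gets $f=\sum_{j\ge0}c_j f_j$ coefficientwise, where $f_j$ is the Hermite interpolant of $(1+t)^j$ at the same nodes. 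Hence it suffices to prove that each $f_j$ has nonnegative Krawtchouk coefficients. For $j\le\tau$ this is trivial: $f_j=(1+t)^j$, and $1+t=\tfrac{2}{q}Q_0^{(n,q)}+\tfrac{2(q-1)}{q}Q_1^{(n,q)}$ by \eqref{KrawInit}, so $1+t$ is positive definite; by the addition formula \eqref{addformula} each $Q_i^{(n,q)}(\langle x,y\rangle)$ is a positive semidefinite kernel on $\mathbb H(n,q)$, Hadamard products of such kernels are positive semidefinite, and therefore every power $(1+t)^j$ has nonnegative Krawtchouk coefficients.

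The remaining case $j\ge\tau+1$ — the Hermite interpolant of a high-degree monomial at the Levenshtein nodes is positive definite — is the heart of the matter and the step I expect to be the main obstacle. Here I would adapt the argument of \cite[Section~3]{CK} from the ultraspherical to the discrete setting of Krawtchouk (discrete orthogonal) polynomials, exploiting the recursive / Christoffel--Darboux description of the nodes $\alpha_i$ (roots of \eqref{alfi}, equivalently of the Levenshtein polynomial \eqref{lev_poly}) and of the attached Gaussian-type quadrature in order to exhibit the low-order Krawtchouk coefficients of $f_j$ as quantities of a definite sign. Everything outside of this positive-definiteness statement is a direct transcription of the proof of Theorem~\ref{thm 6} together with the elementary monomial reduction above.
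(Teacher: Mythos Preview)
Your proposal is correct and follows essentially the same route as the paper: reuse the Hermite interpolant from Theorem~\ref{thm 6}, verify (A2) by adapting the Cohn--Kumar argument \cite[Sections~3 and~5]{CK} to the discrete Krawtchouk setting, and repeat the optimality argument verbatim. The paper is more specific about the one extra ingredient needed for that adaptation---the positivity of $-Q_k^{(1,\varepsilon,n,q)}(s)/Q_{k-1}^{(1,\varepsilon,n,q)}(s)$ (with $s=\alpha_{k-1}$ or $\beta_k$), which follows from the normalization $Q_i^{(1,\varepsilon,n,q)}(1)=1$ and the interlacing of zeros of consecutive orthogonal polynomials---whereas your monomial reduction via Bernstein's theorem is a reasonable unpacking of part of the \cite{CK} machinery but is not separately spelled out in the paper.
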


\begin{proof}
The polynomial from Theorem \ref{thm 6} serves as the solution of the linear program given in Theorem \ref{thm 1}. We already have established (A1). Since we do not have the design property, we need to verify that (A2) is satisfied. We shall do this by adapting the approach in \cite[Sections 3 and 5]{CK}, where for the two cases in the right-hand side of \eqref{bound_codes_odd} we shall consider the discrete measures $(1-t)d\mu_n (t)$ and $(1-t^2)d\mu_n (t)$ and the associated orthogonal polynomials $\{Q_i^{(1,0,n,q)}(t) \}$ and $\{Q_i^{(1,1,n,q)}(t) \}$  respectively.

The results in \cite[Section 3]{CK} (and in particular Theorem 3.1 there) are proved for a Borel measure $\mu$ such that
\[ \int p(t)^2\, d\mu(t) >0 \]
for all polynomials $p$ that are not identically zero and for the associated orthogonal polynomials. However, a careful inspection of the proofs in that section reveals that the results remain true for the discrete measures defined in \eqref{KrawOrtho} and \eqref{KrawOrtho2} and the associated orthogonal  polynomials of degree up to $n$. This is consequence from the fact that for such polynomials the bilinear form \eqref{InnerProd} indeed defines an inner product.

Since the conductivity property for Hermite interpolants, as discussed in \cite[Section 5]{CK}, is independent
of the measure of orthogonality, what remains to prove is the positivity of the constant $-Q_k^{(1,0,n,q)}(\alpha_{k-1})/Q_{k-1}^{(1,0,n,q)}(\alpha_{k-1}) $, respectively $-Q_k^{(1,1,n,q)}(\beta_{k})/Q_{k-1}^{(1,1,n,q)}(\beta_{k}) $.

This follows from the normalization $Q_i^{(1,0,n,q)}(1)=1$, respectively $Q_i^{(1,1,n,q)}(1)=1$,
and the interlacing property of the zeros of the orthogonal polynomials $Q_k^{(1,0,n,q)}(t)$ and
$Q_{k-1}^{(1,0,n,q)}(t)$, respectively $Q_k^{(1,1,n,q)}(t)$ and $Q_{k-1}^{(1,1,n,q)}(t)$ (see \cite[Theorem 3.3.2]{Sze}).

The optimality of the polynomial from Theorem \ref{thm 6} was already derived in
the proof of the previous theorem. This completes the proof.
\end{proof}

\begin{remark}
We note that the roots of the equations \eqref{alfi} and \eqref{beti} interlace with the zeros of
$\{Q_k^{(1,0,n,q)}(t)\}$ and $\{Q_k^{(1,1,n,q)}(t)\}$ respectively (see \cite[Theorem 3.3.4]{Sze}),
which in turn interlace the Krawtchouk polynomials $\{Q_k^{(n,q)}(t)\}$ defined
in \eqref{Kraw} (see \cite[Lemmas 5.29, 5.30]{Lev}).
Therefore, the asymptotic distribution of the quadrature nodes
$\{ \alpha_i \}$, respectively $\{ \beta_i \}$, as $k/n \to const$ when $k,n \to \infty$,
is governed by a constrained energy problem as studied in \cite{DS1} and \cite{DS2}.
We shall investigate this behavior in details in a future work.
\end{remark}

It is clear that all maximal codes which attain the Levenshtein bounds $L_\tau(n,s)$
have the necessary strength and the suitable inner products and therefore achieve
our bounds \eqref{bound_odd_designs} and \eqref{bound_codes_odd} as well.
In \cite{CZ}, a table of universally optimal codes in $\mathbb{H}(n,q)$ is presented.
Some of these codes attain \eqref{L_bnd} and our bounds.
Clearly, all codes on \eqref{bound_odd_designs} and \eqref{bound_codes_odd} are universally optimal.


In the end of this section we remark that the bounds (\ref{bound_odd_designs}) and \eqref{bound_codes_odd}
are discrete analogs of bounds on the potential energy of
spherical codes and designs recently obtained by the authors \cite{BDHSS1,BDHSS2}.

\section{On optimality of the universal lower bounds}

In this section we consider three different approaches for finding better than the universal bounds.

\subsection{Using the discrete nature of the inner products}

Utilizing Cohn-Zhao's \cite{CZ} approach to finding good polynomials we could sometimes improve
the universal bounds by using Lagrange instead of Hermite interpolation
in a set of inner products which correspond to what is called pair covering in \cite{CZ}.
In fact, the nodes $(\alpha_i)$ for the odd case and $(\beta_i)$ for the even case
show which pairs must be covered.

Indeed, if $\alpha_i \in [t_j,t_{j+1})$, then we can replace the Hermite's touching of the
graphs of $f$ and $h$ at the point $\alpha_i$ by intersection in the points $t_j$ and $t_{j+1}$.
This means that the graph of $f$ goes above the graph of
$h$ in the interval $(t_j,t_{j+1})$ and, in particular, $f(\alpha_i) \geq h(\alpha_i)$.
In other words, the idea is to replace the conditions
$f(\alpha_i)=h(\alpha_i)$ and $f^\prime(\alpha_i)=h^\prime(\alpha_i)$ by
$f(t_j)=h(t_j)$ and $f(t_{j+1})=h(t_{j+1})$. Of course, this sometimes needs adjustments as explained
below.

We consider in more detail the odd case $\tau=2k-1$.
Observe that the nodes $\alpha_i$, $i=0,1,\dots,k-2$, as zeros of orthogonal polynomials
w.r.t. the discrete measure $(t-\alpha_{k-1})(1-t) d\mu_n (t)$
are separated by the mass points of $\mu_n$, that is the inner products $T_n$.


For every $i=0,1,\ldots,k-1$, define the pair $(t_{j(i)},t_{j(i)+1})$ of neighboring
elements of $T_n$ by  $t_{j(i)} \leq \alpha_i < t_{j(i)+1}$; i.e.,
$\alpha_i \in [t_{j(i)},t_{j(i)+1})$. If
$t_{j(i)+1}<t_{j(i+1)}$ for every $i \in \{0,1\ldots,k-2\}$,
then the pairs $(t_{j(i)},t_{j(i)+1})$, $i=0,1,\ldots,k-1$, are disjoint. Then
the Lagrange interpolant $f(t)$ of $h(t)$ in the points
\[ t_{j(0)},t_{j(0)+1},t_{j(1)},t_{j(1)+1},\ldots,t_{j(k-1)},t_{j(k-1)+1} \]
has degree at most $2k-1$ and satisfies the property (A1).

If $t_{j(i)+1}=t_{j(i+1)}$ for some $i$ (that is the right end of the interval of $\alpha_i$
coincides with the left end of the interval of $\alpha_{i+1}$) then we apply the Hermite
requirements $f(t_{j(i+1)})=h(t_{j(i+1)})$ and $f^\prime(t_{j(i+1)})=h^\prime(t_{j(i+1)})$.
Now the graph of $f(t)$ touches from above the graph of $h(t)$ at the coincidence point $t_{j(i+1)}$
and (A1) is again satisfied.


Summarizing, we see that this construction always implies that the condition (A1) is satisfied.
The condition (A2$^\prime$) is trivially satisfied and therefore $f(t) \in A_{n,M,2k-1;h}$.
Furthermore, our construction implies, as mentioned above, that $f(\alpha_i) \geq h(\alpha_i)$ for every $i=0,1,\ldots,k-1$.
Hence we obtain
\[ \mathcal{L}(n,M,2k-1;h) \geq f_0M-f(1) \geq M \sum_{i=0}^{k-1} \rho_i h(\alpha_i) \]
and the bound of $f(t)$ is at least as good as the universal bounds \eqref{bound_odd_designs}.
Clearly, the improvement is strict if and only if $\alpha_i \not\in T_n$ for at least one $i$.
We conjecture (see Proposition 21 in \cite{CZ}) that the condition (A2) is also always satisfied and \eqref{bound_codes_odd}
can be correspondingly improved.

The even case is dealt analogously.

\subsection{Test functions and improvements by higher degree polynomials}

Let $n$ and $M$ be fixed and $\tau=\tau(n,M)$ be as explained in the end of subsection 2.3. As above,
the equation
$L_\tau(n,s)=M$, $s=\alpha_{k-1}$ or $\beta_k$, defines all necessary parameters as in subsection 2.5. Let
$j \geq \tau+1$ be a positive integer. We consider the following {\em test-functions} in $n$ and $s$:
\begin{equation}
\label{test-functions}
P_j(n,s):=\begin{cases}
\ds \frac{1}{M}+\sum_{i=0}^{k-1} \rho_i Q_j^{(n,q)}(\alpha_i)
                 & \text{for \ $s \in {\mathcal I}_{2k-1}$} \\[10pt]
\ds               \frac{1}{M}+\sum_{i=0}^{k} \gamma_i Q_j^{(n,q)}(\beta_i)
                 & \text{for \ $s \in {\mathcal I}_{2k}$}.
\end{cases}
\end{equation}
The test functions (\ref{test-functions}) were introduced in 1998 in the context of maximal codes in polynomial metric spaces
(which include $\mathbb{H}(n,q)$) by Boyvalenkov and Danev \cite{BD}, where the binary case $\mathbb{H}(n,2)$ was considered in detail.

The next theorem shows that the functions $P_j(n,s)$ give necessary and sufficient conditions for existence
of improving polynomials of higher degrees satisfying the condition $f(t) \leq h(t)$ in $[-1,1]$.


\medskip

\begin{theorem} \label{thm4.1} Let $h$ be strictly absolutely monotone function.
The bounds (\ref{bound_odd_designs}) or (\ref{bound_codes_odd}) can be improved by a
polynomial of degree at least $\tau+1$ from $A_{n,M,\tau;h}$ or $A_{n,M,h}$, satisfying $f(t) \leq h(t)$ in $[-1,1]$  if and only if
$P_j(n,s) < 0$ for some $j \geq \tau+1$. Furthermore, if $P_j(n,s)<0$ for
some $j \geq \tau+1$, then (\ref{bound_odd_designs}) or (\ref{bound_codes_odd})  can be improved by a polynomial
from $A_{n,M,\tau;h}$ or $A_{n,M,h}$, satisfying $f(t) \leq h(t)$ in $[-1,1]$ of degree exactly $j$.
\end{theorem}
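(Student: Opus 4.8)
The plan is to characterize the optimal value of the linear program in Problem~\ref{prb 1} when we restrict to polynomials $f$ satisfying $f(t) \le h(t)$ on the full interval $[-1,1]$, and to show that the quadrature-based bound \eqref{bound_odd_designs}/\eqref{bound_codes_odd} is the optimal value precisely when all the test functions $P_j(n,s)$, $j \ge \tau+1$, are nonnegative. The quantity to be maximized is $f_0 M - f(1)$, and applying the quadrature \eqref{quad_f0_alfi} (in the odd case; \eqref{quad_f0_beti} in the even case) to the degree-$\le\tau$ part of $f$ will not suffice directly since $\deg f$ may exceed $\tau$. So the first step is to extend the quadrature-type identity: for a polynomial $f = \sum_{i=0}^{\deg f} f_i Q_i^{(n,q)}(t)$ of arbitrary degree, one has
\begin{equation*}
f_0 M - f(1) = M\sum_{i=0}^{k-1}\rho_i f(\alpha_i) - M\sum_{j\ge \tau+1} f_j \Bigl(\tfrac{1}{M}+\sum_{i=0}^{k-1}\rho_i Q_j^{(n,q)}(\alpha_i)\Bigr) = M\sum_{i=0}^{k-1}\rho_i f(\alpha_i) - M\sum_{j\ge \tau+1} f_j P_j(n,s),
\end{equation*}
which follows by expanding $f(\alpha_i) = \sum_j f_j Q_j^{(n,q)}(\alpha_i)$ and $f(1) = \sum_j f_j$ and using $Q_j^{(n,q)}(1)=1$ together with the exactness of the quadrature on $Q_0,\dots,Q_{2k-1}$. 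This identity is the crux: it says $f_0 M - f(1)$ equals the ``Hermite value'' $M\sum_i \rho_i h(\alpha_i)$ (achieved when $f(\alpha_i)=h(\alpha_i)$) minus a correction $M\sum_{j\ge\tau+1} f_j P_j(n,s)$.

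\textbf{Sufficiency (a negative $P_j$ gives improvement).} Suppose $P_{j_0}(n,s) < 0$ for some $j_0 \ge \tau+1$. Starting from the optimal degree-$\tau$ Hermite interpolant $f^*$ of Theorem~\ref{thm 6}, I would perturb it by adding a small multiple of a carefully chosen degree-$j_0$ polynomial: set $f = f^* + \varepsilon\, g$ where $g$ has $g_j \ge 0$ for all $j$, $g_{j_0} > 0$, and $g$ is chosen so that $f$ still satisfies (A1) on $[-1,1]$ for small $\varepsilon>0$. The natural candidate is $g(t) = (t-s)^2 \bigl(\sum_{i=0}^{m} r_i^{(\cdot)} Q_i^{(\cdot)}(t)Q_i^{(\cdot)}(s)\bigr)^2$-type Levenshtein-style polynomial adjusted to have a zero of sufficiently high order at the $\alpha_i$'s, or more simply $g = (f^* - \widetilde f)$ where $\widetilde f$ is a higher-degree Hermite interpolant at the same nodes but with additional interpolation data — but the cleanest route is: since $h$ is \emph{strictly} absolutely monotone, the inequality $f^* < h$ is strict on $(-1,1)$ away from the $\alpha_i$, and one can add a degree-$j_0$ polynomial that vanishes to order two at each $\alpha_i$ (so (A1) is preserved near the nodes by the strict convexity margin) while having a strictly positive $Q_{j_0}$-coefficient. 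By the displayed identity, the new objective value exceeds the old one by $-\varepsilon M g_{j_0} P_{j_0}(n,s) + O(\varepsilon^2) > 0$, so the bound is strictly improved by a polynomial of degree exactly $j_0$.

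\textbf{Necessity (improvement forces some $P_j < 0$).} Conversely, if $f \in A_{n,M,\tau;h}$ (or $A_{n,M;h}$) has degree $\ge \tau+1$, satisfies $f(t)\le h(t)$ on $[-1,1]$, and achieves $f_0 M - f(1)$ strictly larger than the right-hand side of \eqref{bound_odd_designs}, then from the displayed identity and $f(\alpha_i)\le h(\alpha_i)$ (since $\alpha_i\in(-1,1)$ and $\rho_i>0$), we get
\begin{equation*}
M\sum_{i=0}^{k-1}\rho_i h(\alpha_i) < f_0 M - f(1) \le M\sum_{i=0}^{k-1}\rho_i h(\alpha_i) - M\sum_{j\ge\tau+1} f_j P_j(n,s),
\end{equation*}
forcing $\sum_{j\ge\tau+1} f_j P_j(n,s) < 0$; since each $f_j \ge 0$ by (A2$'$), at least one $P_j(n,s)$ must be negative. \textbf{The main obstacle} is the sufficiency direction — specifically, producing the explicit degree-$j_0$ perturbation $g$ that simultaneously (i) preserves (A1) on all of $[-1,1]$ (not just near the $\alpha_i$), (ii) keeps all Krawtchouk coefficients nonnegative, and (iii) has $g_{j_0}>0$. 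Here one must exploit strict absolute monotonicity to get a uniform positive gap $h - f^*$ on compact subsets bounded away from the interpolation nodes, control the behavior near each $\alpha_i$ via the double zeros of $g$, and handle the endpoint $t=1$ (where $f^*(1)=h(1)$ is allowed but $g(1)$ enters the objective) — the Levenshtein polynomial structure from \eqref{lev_poly}, which has exactly the right sign pattern and node placement, is what makes this construction go through, and I would model $g$ on $(t-s)(t-\alpha_0)^2\cdots(t-\alpha_{k-1})^2 \cdot (\text{low-degree nonneg. factor})$ with the degree padded up to $j_0$.
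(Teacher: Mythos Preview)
Your necessity argument is correct and matches the paper's exactly: the extended quadrature identity
\[
f_0 M - f(1) = M\sum_{i=0}^{k-1}\rho_i f(\alpha_i) - M\sum_{j\ge \tau+1} f_j P_j(n,s)
\]
combined with $f(\alpha_i)\le h(\alpha_i)$ and $f_j\ge 0$ immediately forces some $P_j(n,s)<0$ if the bound is beaten.

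The sufficiency direction, however, has a real gap. You correctly flag the obstacle --- producing a perturbation that (i) preserves $f\le h$ on $[-1,1]$, (ii) keeps the Krawtchouk coefficients nonnegative, and (iii) has positive $j_0$-th coefficient --- but none of your candidate constructions actually delivers this. A polynomial of the shape $(t-\alpha_0)^2\cdots(t-\alpha_{k-1})^2\cdot(\text{something of degree }j_0-2k+1)$ will generically have Krawtchouk coefficients of both signs, so condition (A2) fails; and your objective computation ``improvement $= -\varepsilon M g_{j_0}P_{j_0}(n,s)+O(\varepsilon^2)$'' is only valid if $g_j=0$ for all $\tau+1\le j<j_0$, which your construction does not guarantee. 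The strict gap $h-f^*>0$ away from the $\alpha_i$ helps with (A1) but does nothing for (A2).

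The paper sidesteps this entirely by perturbing the \emph{target} rather than the \emph{polynomial}. Set $\tilde h(t):=h(t)-\epsilon\,Q_{j_0}^{(n,q)}(t)$; since $h$ is strictly absolutely monotone and $Q_{j_0}^{(n,q)}$ is a fixed polynomial, $\tilde h$ remains absolutely monotone for $\epsilon>0$ small enough. Now let $g$ be the Hermite interpolant of $\tilde h$ at the nodes $\alpha_0,\dots,\alpha_{k-1}$ and put $f:=g+\epsilon\,Q_{j_0}^{(n,q)}$. Then $g\le\tilde h$ on $[-1,1]$ by the Rolle argument of Theorem~\ref{thm 6}, so $f\le h$; and crucially $g\in A_{n,M;\tilde h}$ has \emph{all} Krawtchouk coefficients nonnegative by Theorem~\ref{thm 7} (applied to $\tilde h$), whence $f\in A_{n,M;h}$. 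Since $f_0=g_0$ and $f(1)=g(1)+\epsilon$, the quadrature gives
\[
Mf_0-f(1)=M\sum_{i=0}^{k-1}\rho_i h(\alpha_i)-M\epsilon\,P_{j_0}(n,s)>M\sum_{i=0}^{k-1}\rho_i h(\alpha_i).
\]
This is the missing idea: absolute monotonicity of the \emph{modified} potential lets you invoke the already-established positive-definiteness machinery, rather than trying to engineer positive definiteness of the perturbation by hand.
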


\begin{proof} We give a proof for $\tau=2k-1$.

(Necessity)   Suppose $f(t) \in A_{n,M,\tau;h}$ or $A_{n,M,h}$ and $f(t) \leq h(t)$ in $[-1,1]$. Then
\begin{equation}
\label{n1}
f(t)= g(t)+\sum_{j\geq \tau+1}  f_j Q_j^{(n,q)}(t),\nonumber
\end{equation}
for some $g$ of degree at most $\tau$ and $f_j\ge 0$, for $j\geq \tau+1$.
Note that $f_0=g_0$. Furthermore, using \eqref{quad_f0_alfi} for $g(t)$, we obtain
\begin{eqnarray*}
Mf_0- f(1) &=& Mg_0 - f(1) = g(1)+M\sum_{i=0}^{k-1} \rho_i g(\alpha_i) -\left(g(1)+\sum_{j\geq \tau+1} f_j \right) \\
    &=& M\sum_{i=0}^{k-1} \rho_i \left(f(\alpha_i)-\sum_{j\geq \tau+1} f_j Q_j^{(n,q)}(\alpha_i)\right)-\left(\sum_{j\geq \tau+1}
               f_j \right) \\
    &=& M\sum_{i=0}^{k-1} \rho_i  f(\alpha_i)-
               M\sum_{j\geq \tau+1}  f_j \left(\frac{1}{M}+ \sum_{i=0}^{k-1}\rho_i Q_j^{(n,q)}(\alpha_i) \right)\\
    &=& M\sum_{i=0}^{k-1} \rho_i  f(\alpha_i)-M\sum_{j\geq \tau+1} f_jP_j(n,s)
               \le M\sum_{i=0}^{k-1} \rho_i h(\alpha_i),
\end{eqnarray*}
where, for the last inequality, we used $f(t) \in A_{n,M,\tau;h}$ or $A_{n,M,h}$ and $P_j(n,s) \geq 0$.

\smallskip

(Sufficiency) Conversely, assume that $h$ is strictly absolutely monotone and suppose
that $P_j(n,s) <0$ for some $j \geq 2k$.

We shall improve the bound (\ref{bound_odd_designs}) or (\ref{bound_codes_odd}) by using the polynomial
\[ f(t)=\epsilon Q_j^{(n,q)}(t)+g(t), \]
where $\epsilon >0$ and $g(t)$ of degree at most $2k-1$ will be properly chosen.

Denote $\tilde{h}(t):= h(t)-\epsilon Q_j^{(n,q)}(t)$ and select $\epsilon$ such that
$\tilde{h}(t)^{(i)}(t) \geq 0$ on $[-1,1]$ for all $i=0,1,\dots,j$.
For this choice of $\epsilon$ the function $\tilde{h}(t)$ is absolutely monotone.
The polynomial $g(t)$  is chosen then to be the Hermite interpolant of $\tilde{h}$ at the nodes $\{ \alpha_i\}$, i.e.
\[ g(\alpha_i)=\tilde{h}(\alpha_i), \ \ g^\prime(\alpha_i)=\tilde{h}^\prime(\alpha_i), \ i=0,1,\ldots,k-1. \]
Then $g \in A_{n,M,\tau;\tilde{h}}$ implying that $f \in A_{n,M,\tau;h}$ and, since $\tilde{h}(t)$ is an absolutely monotone function,
we can infer as in Theorem \ref{thm 7} that $g \in A_{n,M;\tilde{h}}$, implying that $f\in A_{n,M;h}$.

Let $g(t)=\sum_{\ell=0}^{2k-1} g_\ell Q_\ell^{(n,q)}(t)$. Note that $f_0=g_0$ and $f(1)=g(1)+\epsilon$.
We next prove that the bound given by $f(t)$ is better than the odd branch of \eqref{bound_odd_designs} or
\eqref{bound_codes_odd}. To this end, we
multiply by $\rho_i$ and sum up the first interpolation equalities:
\[ \sum_{i=0}^{k-1} \rho_i g(\alpha_i)= \sum_{i=0}^{k-1} \rho_i h(\alpha_i)-\epsilon \sum_{i=0}^{k-1} \rho_i Q_j^{(n,q)}(\alpha_i). \]
Since $$M\sum_{i=0}^{k-1} \rho_i g(\alpha_i)=Mg_0-g(1)$$ by (\ref{quad_f0_alfi}) and
$$M\sum_{i=1}^{k} \rho_i Q_j^{(n,q)}(\alpha_i)=MP_j(n,s)-1 $$
by the definition of the test functions (\ref{test-functions}), we obtain
\[ Mg_0-g(1)=M\sum_{i=0}^{k-1} \rho_i h(\alpha_i)+\epsilon -M\epsilon P_j(n,s) \]
which is equivalent to
\[ Mf_0-f(1)=M\sum_{i=0}^{k-1} \rho_i h(\alpha_i)-M\epsilon P_j(n,s)>M\sum_{i=0}^{k-1} \rho_i h(\alpha_i), \]
i.e. the polynomial $f(t)$ gives a better bound indeed.
\end{proof}

\medskip

The investigation of the test functions $P_{2k+3}(n,s)$ for $\tau \in \{2k-1,2k\}$
in the binary case from \cite[Section 4.2]{BD} gives the following.

\medskip

\begin{theorem}
\label{test_functions_binary}
Let $q=2$.

a) If $2k+3 \leq n \leq k^2+4k+2$, then the even branch of the bounds \eqref{bound_odd_designs}
and \eqref{bound_codes_odd} can be improved for every $s \in \left(t_k^{1,0},t_k^{1,1}\right)$.

b) If $k \geq 5$ and $2k+3 \leq n \leq \left(k^2+8k+1+\sqrt{(k^2+4k+5)(k^2-4k-3)}\right)/4$,
then the odd branch of the bounds \eqref{bound_odd_designs}
and \eqref{bound_codes_odd} can be improved for every $s \in \left(t_{k-1}^{1,1},t_k^{1,0}\right)$.
\end{theorem}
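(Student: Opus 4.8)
The plan is to reduce Theorem~\ref{test_functions_binary} entirely to Theorem~\ref{thm4.1}: since the latter gives a necessary and sufficient criterion for improvability in terms of the sign of a single test function $P_j(n,s)$, it suffices to exhibit one value of $j\ge\tau+1$ for which $P_j(n,s)<0$ on the claimed $s$-intervals. The natural candidate, following \cite[Section 4.2]{BD}, is $j=2k+3$, which is the first ``interesting'' degree above $\tau$ in both the odd case ($\tau=2k-1$) and the even case ($\tau=2k$). So the real content is an explicit sign analysis of $P_{2k+3}(n,s)$ in the binary setting $q=2$.

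First I would specialize the defining formula \eqref{test-functions} to $q=2$ and rewrite $P_{2k+3}(n,s)$ using the quadrature \eqref{quad_f0_alfi} (resp.\ \eqref{quad_f0_beti}) applied backwards: since the nodes $\alpha_i$ (resp.\ $\beta_i$) and weights $\rho_i$ (resp.\ $\gamma_i$) are exactly those appearing in Levenshtein's quadrature, and since $Q_{2k+3}^{(n,2)}$ has degree $2k+3 = 2(k+1)+1 > \tau$, the quantity $P_{2k+3}(n,s) = \tfrac1M + \sum_i \rho_i Q_{2k+3}^{(n,2)}(\alpha_i)$ is not simply zero; one extracts a closed form by expanding $Q_{2k+3}^{(n,2)}$ and using the Christoffel--Darboux kernel $T_k(u,v)$ together with the three-term recurrence \eqref{Kraw3term}. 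This is precisely the computation carried out in \cite{BD}, and I would quote its outcome: $P_{2k+3}(n,s)$ is, up to a positive factor depending on $n,k,s$, a polynomial expression in $n$ (for each fixed $k$ and each $s$ in the relevant Levenshtein interval) whose sign is controlled by an explicit quadratic in $n$. The two branches of the theorem then follow by solving these quadratic inequalities: in the even case the roots are $2k+3$ and $k^2+4k+2$ and the expression is negative strictly between them (after checking it is negative for at least one interior $n$), giving (a); in the odd case the relevant discriminant produces the root $\bigl(k^2+8k+1+\sqrt{(k^2+4k+5)(k^2-4k-3)}\bigr)/4$, with the lower end $2k+3$, valid once $k\ge 5$ so that $k^2-4k-3>0$ and the square root is real and the interval nonempty; this gives (b).

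Concretely, the steps in order are: (i) write $P_{2k+3}(n,s)$ via \eqref{test-functions} for $q=2$; (ii) use the Levenshtein quadrature identities of Subsection~2.5 and the Christoffel--Darboux formula of Subsection~2.6 to reduce $\sum_i \rho_i Q_{2k+3}^{(n,2)}(\alpha_i)$ to an expression in the adjacent Krawtchouk polynomials $Q_k^{(1,0,n,2)}$, $Q_{k-1}^{(1,0,n,2)}$ evaluated at $s$ (resp.\ the $(1,1)$ analogs at $s=\beta_k$), plus the $1/M$ term with $M=L_\tau(n,s)$; (iii) substitute the binary formulas \eqref{Kraw}--\eqref{Kraw3term} and \eqref{adjacent}--\eqref{adjacent3} to obtain an explicit rational function of $n,k,s$; (iv) factor out the manifestly positive pieces and identify the sign-determining factor as a quadratic in $n$; (v) solve the inequality ``quadratic $<0$'' uniformly over $s$ in the open interval $\mathcal{I}_\tau\setminus\{\text{endpoints}\}$, checking that the sign is constant in $s$ there (this uses monotonicity of $L_\tau(n,s)$ and the interlacing bounds of Subsection~2.6 to keep $s$ away from the zeros of the denominators); (vi) read off the ranges of $n$ stated in (a) and (b); (vii) invoke Theorem~\ref{thm4.1} to conclude that, on those ranges, a polynomial of degree exactly $2k+3$ from $A_{n,M,\tau;h}$ (or $A_{n,M,h}$) strictly improves the corresponding branch of \eqref{bound_odd_designs} and \eqref{bound_codes_odd}, for every strictly absolutely monotone $h$.

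The main obstacle is step (iv)--(v): turning the kernel/recurrence manipulation into a genuinely \emph{explicit} sign-determining quadratic in $n$, and then verifying that this sign does not change as $s$ ranges over the whole open Levenshtein subinterval. The algebra is the same as in \cite[Section 4.2]{BD}, so I would lean on that reference for the closed form of $P_{2k+3}(n,s)$ rather than rederive it; the part that still needs care here is (v), namely confirming ``for every $s$'' rather than ``for some $s$'', which requires either a monotonicity argument in $s$ for the sign-determining factor or the observation that its $s$-dependence enters only through factors of definite sign on $\mathcal{I}_\tau$ (e.g.\ $(s-\alpha_{k-1})$, $(1-s)$, $(1+s)$ and squares of adjacent Krawtchouk values, all controlled by the interlacing/normalization facts already recalled). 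Once that uniformity is in hand, the rest is bookkeeping and an appeal to the already-proved Theorem~\ref{thm4.1}.
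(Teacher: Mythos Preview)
Your proposal is correct and matches the paper's approach exactly: the paper gives no self-contained proof of this theorem but simply states that it follows from the sign analysis of $P_{2k+3}(n,s)$ carried out in \cite[Section~4.2]{BD}, combined with the test-function criterion of Theorem~\ref{thm4.1}. Your outline (choose $j=2k+3$, quote the explicit closed form for $P_{2k+3}(n,s)$ from \cite{BD}, identify the sign-determining quadratic in $n$, and then invoke Theorem~\ref{thm4.1}) is precisely this, with more detail than the paper itself supplies.
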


\subsection{Bounds for codes and designs with inner products in a subinterval of $[-1,1]$}

Some classes of codes or designs are known to have inner products in proper subinterval of $[-1,1]$ and this
implies restrictions on their structure. We proceed with examples for such situations with
$\tau$-designs in the binary case $\mathbb{H}(n,2)$ with even $\tau=2k$ and
of cardinality $M \in (R(n,2k),R(n,2k+1))$.

The next assertion is implicit in \cite[Section 4]{BD_pms} (see also Corollary 5.49 and Remark 5.58 in \cite{Lev}).

\medskip

\begin{lemma}
\label{Lem4.1}
If $C \subset \mathbb{H}(n,2)$ is a $(2k)$-design of cardinality $M \in (R(n,2k),R(n,2k+1))$
then $\gamma_0 M \in (0,1)$.
\end{lemma}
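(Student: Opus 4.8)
The plan is to exploit the quadrature formula \eqref{quad_f0_beti} together with the defining property of $\tau$-designs in the form \eqref{defin_f.3}, specialized to polynomials of degree exactly $2k+1$. Recall that for $\tau=2k$ and $M\in(R(n,2k),R(n,2k+1))$ we have associated the nodes $\beta_0=-1,\beta_1,\ldots,\beta_k$ and positive weights $\gamma_0,\ldots,\gamma_k$. The key observation is that $\beta_0=-1$ is the smallest node, so $\gamma_0$ measures the ``weight at the antipode,'' and the claim $\gamma_0 M\in(0,1)$ is really a statement that this weight is positive (immediate, since all $\gamma_i>0$) but strictly less than $1/M$.

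First I would record that $\gamma_0>0$ is part of Levenshtein's construction (Subsection~\ref{quadSec}), so $\gamma_0 M>0$ is automatic; the content is the upper bound $\gamma_0 M<1$. To get this, apply the quadrature \eqref{quad_f0_beti} to a cleverly chosen polynomial $f$ of degree $\le 2k$ that isolates the node $\beta_0=-1$: for instance, let $f$ be (a positive multiple of) $\prod_{i=1}^{k}(t-\beta_i)^2$, which has degree $2k$, is nonnegative on $[-1,1]$, vanishes at $\beta_1,\ldots,\beta_k$, and is strictly positive at $t=-1$ and at $t=1$. Then \eqref{quad_f0_beti} collapses to
\[
f_0=\frac{f(1)}{M}+\gamma_0 f(-1),
\]
so $\gamma_0 = (f_0 - f(1)/M)/f(-1)$, and the inequality $\gamma_0 M<1$ becomes $Mf_0 - f(1) < f(-1)$, i.e. $Mf_0 < f(1)+f(-1)$. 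So the task reduces to estimating the Krawtchouk coefficient $f_0=\langle f,1\rangle=\int f\,d\mu_n$ against $f(\pm1)$.

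The second step is to bound $f_0$ from above. Here is where I expect the main work: one needs $M f_0 < f(1)+f(-1)$ for this specific $f$, and this should follow from the fact that $M>R(n,2k)$ strictly. The natural route is to connect $f$ to the extremal polynomial $f^{(2k)}(t)$ from the Rao bound (Subsection~2.3), whose nodes are related to the $\beta_i$, and to use that $M f_0 = f(1)+f(-1)$ would force $M=R(n,2k)$ (the boundary case), contradicting $M\in(R(n,2k),R(n,2k+1))$ being an \emph{open} interval. Concretely, I would use the identity \eqref{main} applied to a $(2k)$-design $C$ of cardinality $M$: since $f$ has degree $2k$, the right side of \eqref{main} reduces to $M^2 f_0$, giving $M^2 f_0 = M f(1) + \sum_{x\ne y} f(\langle x,y\rangle)$. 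Because $f\ge 0$ on $T_n$, the sum is $\ge 0$, but to get the \emph{strict} inequality $Mf_0 < (f(1)+f(-1))$ rather than just $Mf_0\le$ something, I would instead argue via Definition~\ref{def_des_alg}: $\sum_{y\in C} f(\langle x,y\rangle) = f_0 M$ for every fixed $x$. Picking $x\in C$, the $y=x$ term contributes $f(1)$, and the remaining terms are $\ge 0$; if equality $f_0 M = f(1)$ held we'd need $f(\langle x,y\rangle)=0$ for all $y\ne x$, i.e. all inner products lie in $\{\beta_1,\ldots,\beta_k\}\cap T_n$ — but then $C$ would be a code with at most $k$ distinct distances and strength $2k$, forcing $M=R(n,2k)$ by the Delsarte/Rao equality conditions, a contradiction. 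This gives $f_0 M > f(1)$, hence $\gamma_0 = (f_0-f(1)/M)/f(-1)$ and $\gamma_0 M = (Mf_0 - f(1))/f(-1) > 0$; and the upper bound $\gamma_0 M<1$ similarly amounts to ruling out the other boundary, $M=R(n,2k+1)$.

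The step I expect to be the crux is pinning down exactly which boundary case ($M=R(n,2k)$ or $M=R(n,2k+1)$) corresponds to which degeneracy ($\gamma_0 M\to 0$ or $\gamma_0 M\to 1$), and showing both are excluded by $M$ lying strictly inside $(R(n,2k),R(n,2k+1))$. For this I would lean on the relations \eqref{L-DGS1}: as $s=\beta_k\to t_k^{1,0}$ we have $M=L_{2k}(n,s)\to R(n,2k)$, and Levenshtein's quadrature in the even case degenerates — the node $\beta_0=-1$ and the node at $s$ merge into the odd-case quadrature \eqref{quad_f0_alfi} for $R(n,2k)$, which has no mass at $-1$; so $\gamma_0\to 0$ there. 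At the other end, $s\to t_k^{1,1}$ gives $M\to R(n,2k+1)$ and the quadrature \eqref{quad_f0_beti} becomes the one associated to the Rao bound for strength $2k+1$, where (by the even Rao extremal polynomial having the factor $(t+1)$) the weight at $-1$ satisfies $\gamma_0 M\to 1$. Monotonicity of $\gamma_0 M$ in $s$ across $\mathcal I_{2k}$ — which is the genuinely technical point, provable by differentiating the quadrature conditions or by the argument in \cite[Section 4]{BD_pms} — then yields $\gamma_0 M\in(0,1)$ on the open interval. If the monotonicity is awkward to establish directly, the fallback is the polynomial argument above: $\gamma_0 M>0$ from positivity of weights, and $\gamma_0 M<1$ by applying the quadrature to $f(t)=(1+t)\prod_{i=1}^{k}(t-\beta_i)^2/(\text{normalization})$ and using $M>R(n,2k)$ strictly.
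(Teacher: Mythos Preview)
Your third paragraph lands on the correct strategy, and it is essentially the paper's approach: identify the endpoint values $\gamma_0 M \to 0$ as $s \to t_k^{1,0}$ and $\gamma_0 M \to 1$ as $s \to t_k^{1,1}$, and establish monotonicity of $\gamma_0 M$ in $s$ across $\mathcal{I}_{2k}$. The paper carries this out via an explicit closed form rather than by ``differentiating the quadrature conditions'': using Levenshtein's formulas for $\gamma_0$ and for $M=L_{2k}(n,s)$ in terms of the Christoffel--Darboux kernels $T_k(\cdot,\cdot)$, one obtains $\gamma_0 M = (T-A(s))/(T-1/A(s))$ with $A(s)=T_k(s,1)/T_k(s,-1)$ and $T=T_k(1,1)/T_k(1,-1)$. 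The binary hypothesis enters precisely here, through the symmetries $T_k(1,1)=T_k(-1,-1)$ and $T_k(1,-1)=T_k(-1,1)$ (see Remark~\ref{rem_antipodal}), which force $A(s)$ and $T$ to have opposite signs and yield $\gamma_0 M = (|T|+|A(s)|)/(|T|+1/|A(s)|)$. Monotonicity of $|A(s)|$ then follows from \cite[Lemma 5.31]{Lev}, and the endpoint values $0$ and $1$ finish the proof. So your outline is right, but the ``genuinely technical point'' you flag is resolved by these concrete kernel identities, not by a general variational argument. Note also that the paper's proof never uses the existence of the design $C$; it is purely a statement about the quadrature weights as functions of $s$.

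Your second-paragraph polynomial route and the fallback at the end both have real gaps. With $f(t)=\prod_{i=1}^k (t-\beta_i)^2$ you correctly get $\gamma_0 M = (M f_0 - f(1))/f(-1)$, and \eqref{main} applied to a $(2k)$-design gives $M f_0 - f(1) = M^{-1}\sum_{x\ne y} f(\langle x,y\rangle) \ge 0$, recovering $\gamma_0 M \ge 0$ (which you already had from positivity of the weights). But the upper bound $\gamma_0 M <1$ would require $\sum_{x\ne y} f(\langle x,y\rangle) < M f(-1)$, and nothing in your sketch yields this; the sentence ``similarly amounts to ruling out the other boundary'' is not an argument. The fallback polynomial $(1+t)\prod_{i=1}^k(t-\beta_i)^2$ has degree $2k+1$, so the quadrature \eqref{quad_f0_beti}, exact only up to degree $2k$, does not apply; and even formally, since this $f$ vanishes at $-1$ and at every $\beta_i$, the quadrature identity would read $f_0 = f(1)/M$ and would not involve $\gamma_0$ at all. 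So the polynomial approach, as written, does not reach $\gamma_0 M<1$.
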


\begin{proof}
We have the formulas (Equation (5.113) in \cite{Lev})
\[ \gamma_0=\frac{T_k(s,1)}{T_k(-1,-1)T_k(s,1)-T_k(-1,1)T_k(s,-1)} \]
and (the equation in the last line of page 488 in \cite{Lev})
\[ M=L_{2k}(n,s)=\frac{T_k(1,1)T_k(s,-1)-T_k(1,-1)T_k(s,1)}{T_k(s,-1)}. \]
Simple algebraic manipulations then show
\[ \gamma_0 M= \frac{T-A(s)}{T-1/A(s)}, \]
where $A(s)=T_k(s,1)/T_k(s,-1)$ as in \cite{BD} and $T=T_k(1,1)/T_k(1,-1)$.
Moreover, we have
\[ A(s)=T \cdot \frac{Q_k^{(1,0,n,2)}(s)}{Q_k^{(0,1,n,2)}(s)} \]
from \cite[Lemma 5.24]{Lev}, where $Q_k^{(1,0,n,2)}(s)>0$ and $Q_k^{(0,1,n,2)}(s)<0$ for every
$s \in \left(t_k^{1,0},t_k^{1,1}\right)$ (see Lemmas 5.29 and 5.30 in \cite{Lev}).
Therefore the signs of $A(s)$ and $T$ are opposite. We conclude that
\[  \gamma_0 M= \frac{|T|+|A(s)|}{|T|+1/|A(s)|}. \]
By Lemma 5.31 from \cite{Lev} the ratio $\frac{Q_k^{(1,0,n,2)}(s)}{Q_k^{(0,1,n,2)}(s)}$ is
decreasing in $s$ in the interval $\left(t_k^{1,0},t_k^{1,1}\right)$. Therefore
$|A(s)|$ is increasing in $s \in \left(t_k^{1,0},t_k^{1,1}\right)$.
Since $\gamma_0 M=0$ and 1 for $s=t_k^{1,0}$ and $t_k^{1,1}$, respectively,
we obtain that $\gamma_0 M$ increases from 0 to 1 when $s$ increases from $s=t_k^{1,0}$ to $t_k^{1,1}$.
\end{proof}

\medskip

\begin{remark}
\label{rem_antipodal}
We used in the proof the fact that the space $\mathbb{H}(n,2)$ is antipodal (i.e. for
every $x \in \mathbb{H}(n,2)$ there exists a unique antipodal point $y \in \mathbb{H}(n,2)$ such that
$d(x,y)=n \iff \langle x,y \rangle =-1$). Indeed, it follows from the
definition of the Krawtchouk polynomials for $q=2$
that the kernel $T_k(u,v)$ is symmetric (we need $T_k(1,1)=T_k(-1,-1)$ and $T_k(1,-1)=T_k(-1,1)$
that is not true for $q \geq 3$).
\end{remark}

\medskip

Next, we focus on utilizing estimates on the quantities $\ell(n,M,2k)$ and $s(n,M,2k)$ defined in Subsection 2.6 to improve our energy bounds.
\begin{lemma}
\label{Lem4.2}
(a part of Lemma 4.1 in \cite{BBD}) Let $C \subset \mathbb{H}(n,2)$ be a $(2k)$-design of cardinality $M \in (R(n,2k),R(n,2k+1))$
and $\xi$ be the smallest root of the equation $f(t)=\gamma_0 Mf(-1)$, where $f(t)=(t-\beta_1)^2\ldots(t-\beta_k)^2$.
Then $\ell(n,M,2k) \geq \xi$.
\end{lemma}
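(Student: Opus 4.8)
The plan is to apply the general upper-bound machinery of Theorem~\ref{thm 4} with a carefully chosen polynomial $g$, combined with the design characterization in Definition~\ref{def_des_alg}. First I would recall that $C$ is a $(2k)$-design and that every pairwise inner product $\langle x,y\rangle$ with $x\ne y$ lies in the interval $[\ell(C),s(C)]$. I would argue by contradiction: suppose there were a $(2k)$-design $C$ of cardinality $M$ with $\ell(C)<\xi$, so that some pair $x\ne y$ satisfies $\langle x,y\rangle<\xi$. The goal is to extract a contradiction from the quadrature \eqref{quad_f0_beti} applied to the nonnegative polynomial $f(t)=(t-\beta_1)^2\cdots(t-\beta_k)^2$, which has degree $2k$, so the quadrature is exact for it.

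The key computation is as follows. Applying \eqref{quad_f0_beti} to $f$ gives $f_0=\frac{f(1)}{M}+\sum_{i=0}^{k}\gamma_i f(\beta_i)=\frac{f(1)}{M}+\gamma_0 f(-1)$, since $f(\beta_i)=0$ for $i=1,\dots,k$ and $\beta_0=-1$. On the other hand, by Definition~\ref{def_des_alg} applied at the point $x$, we have $\sum_{y\in C}f(\langle x,y\rangle)=f_0 M$, i.e. $f(1)+\sum_{y\in C,\,y\ne x}f(\langle x,y\rangle)=f_0 M$, and hence $\sum_{y\in C,\,y\ne x}f(\langle x,y\rangle)=f_0 M-f(1)=M\gamma_0 f(-1)$ by the previous line. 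Now the left-hand side is a sum of nonnegative terms (since $f$ is a perfect square), so each term is at most $M\gamma_0 f(-1)$; in particular, for the pair realizing $\ell(C)$ we get $f(\ell(C))\le M\gamma_0 f(-1)$. By Lemma~\ref{Lem4.1}, $0<\gamma_0 M<1$, so $f(\ell(C))\le M\gamma_0 f(-1)<f(-1)$, which constrains $\ell(C)$; more precisely $\ell(C)$ cannot lie strictly to the left of every root of $f(t)=\gamma_0 M f(-1)$.

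To make this precise I would analyze the behavior of $f$ on $[-1,\beta_1]$: on this interval $f$ is positive and, I claim, monotonically decreasing as $t$ moves away from $-1$ toward $\beta_1$ (or at least $f(-1)$ is the value at the left endpoint and the equation $f(t)=\gamma_0 M f(-1)$ has a unique smallest root $\xi$ in $(-1,\beta_1)$ because $\gamma_0 M\in(0,1)$ forces the target value to be strictly below $f(-1)$ but positive). Since $f(\ell(C))\le \gamma_0 M f(-1)=f(\xi)$ and $f$ is decreasing on $[-1,\xi]$, we cannot have $\ell(C)<\xi$ — that would force $f(\ell(C))>f(\xi)$, a contradiction. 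Hence $\ell(C)\ge\xi$, and taking the minimum over all such designs $C$ yields $\ell(n,M,2k)\ge\xi$.

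The main obstacle I anticipate is the justification of the monotonicity/sign analysis of $f$ on $[-1,\beta_1]$ needed to pass from the inequality $f(\ell(C))\le f(\xi)$ to the geometric conclusion $\ell(C)\ge\xi$: one must know that $\xi$ is well-defined as the \emph{smallest} root, that it lies in $(-1,\beta_1)$, and that $f$ is decreasing on $[-1,\xi]$ so that the level set behaves as expected. This hinges on $\beta_1>-1$ (true since $-1=\beta_0<\beta_1<\cdots<\beta_k$) and on $\gamma_0 M\in(0,1)$ (supplied by Lemma~\ref{Lem4.1}, which is why the hypothesis $M\in(R(n,2k),R(n,2k+1))$ appears with strict inequalities). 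A secondary point to handle carefully is that the argument uses Definition~\ref{def_des_alg} at an \emph{arbitrary} base point $x\in C$ — in particular at a point $x$ participating in a pair that realizes $\ell(C)$ — which is legitimate since that definition holds for all $x\in\mathbb{H}(n,q)$, hence for all $x\in C$.
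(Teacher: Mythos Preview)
Your argument is correct. The paper does not supply its own proof of this lemma; it is quoted from \cite{BBD}. Your approach---apply Definition~\ref{def_des_alg} at a codeword $x$ with the square polynomial $f(t)=\prod_{i=1}^k(t-\beta_i)^2$, use the quadrature \eqref{quad_f0_beti} to evaluate $f_0M-f(1)=\gamma_0 M f(-1)$, and bound each nonnegative summand $f(\langle x,y\rangle)$ by the total $\gamma_0 M f(-1)$---is the natural one and is presumably what \cite{BBD} does. The monotonicity you flag as the main obstacle is immediate: for $t<\beta_1$ one has $f(t)>0$ and $f'(t)/f(t)=2\sum_{i=1}^k (t-\beta_i)^{-1}<0$, so $f$ is strictly decreasing on $(-\infty,\beta_1)$; together with $\gamma_0 M\in(0,1)$ from Lemma~\ref{Lem4.1} this pins the smallest root $\xi$ in $(-1,\beta_1)$ and forces $\ell(C)\ge\xi$. (Your opening mention of Theorem~\ref{thm 4} is a red herring---the actual argument never invokes it.)
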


\medskip

Since $\gamma_0 M<1$ from Lemma \ref{Lem4.1} implies $f(\xi)=\gamma_0 Mf(-1)<f(-1)$ in Lemma \ref{Lem4.2}, it follows
that $-1<\xi \leq \ell(n,M,2k)$ for every $M \in (R(n,2k),R(n,2k+1))$.
We now modify Theorem \ref{thm 2} to use this fact.

\medskip

\begin{theorem}
\label{Thm_strict1} Let  $n$ and $h$ be as in Theorem \ref{thm 1}, $q=2$, $\tau=2k$ and $M \geq R(n,2k)$ be fixed.
Let $f(t)$ be a real polynomial that satisfies {\rm (A2$^\prime$)} and

{\rm (A1$^{\prime\prime}$)} $f(t) \leq h(t)$ for every $t \in T_n \cap [\ell(n,M,2k),s(n,M,2k)]$.

Then ${\mathcal L}(n,M,2k;h) \geq f_0M-f(1)$.
\end{theorem}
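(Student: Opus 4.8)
The plan is to mimic the proof of Theorem~\ref{thm 2}, but to exploit the fact that for a $(2k)$-design $C$ with $|C|=M\in(R(n,2k),R(n,2k+1))$ the inner products $\langle x,y\rangle$ with $x\neq y$ actually lie in the \emph{restricted} range $[\ell(n,M,2k),s(n,M,2k)]\subseteq[-1,1)$, so that condition (A1) need only be imposed on $T_n\cap[\ell(n,M,2k),s(n,M,2k)]$ rather than on all of $T_n$. First I would dispose of the trivial case: if there is no $M$-point $(2k)$-design in $\mathbb{H}(n,2)$, then ${\mathcal L}(n,M,2k;h)=\infty$ by our convention and the inequality holds vacuously. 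So assume $C\subset\mathbb{H}(n,2)$ is a $(2k)$-design with $|C|=M$.

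Next I would write $f(t)=\sum_{i=0}^{\deg f} f_i Q_i^{(n,q)}(t)$ and apply the key identity \eqref{main} to $C$ with this $f$. Because $C$ is a $\tau$-design with $\tau=2k$, the design property \eqref{eqDef} kills the terms with $1\le i\le 2k$ on the right-hand side of \eqref{main}, leaving
\begin{equation*}
|C|f(1)+\sum_{x,y\in C,\,x\neq y}f(\langle x,y\rangle)=|C|^2f_0+\sum_{i\geq 2k+1}\frac{f_i}{r_i}\sum_{j=1}^{r_i}\Bigl(\sum_{x\in C}Y_{ij}(x)\Bigr)^2.
\end{equation*}
By (A2$^\prime$) every coefficient $f_i$ with $i\geq 2k+1=\tau+1$ is nonnegative, and each inner sum is a square, so the last double sum is $\geq 0$. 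Hence
\begin{equation*}
\sum_{x,y\in C,\,x\neq y}f(\langle x,y\rangle)\geq |C|^2f_0-|C|f(1)=M^2f_0-Mf(1).
\end{equation*}

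The one new ingredient — and the only point where the hypothesis (A1$^{\prime\prime}$) differs from (A1) — is the reverse bound on the left-hand side. For every pair $x\neq y$ in $C$ we have $\langle x,y\rangle\in T_n$ (since these are genuine inner products of points in $\mathbb{H}(n,q)$) and, by the definitions \eqref{slDef} of $\ell(n,M,2k)$ and $s(n,M,2k)$ as the minimum of $\ell(C)$ and maximum of $s(C)$ over all such designs, we have $\ell(n,M,2k)\le\langle x,y\rangle\le s(n,M,2k)$. Thus $\langle x,y\rangle\in T_n\cap[\ell(n,M,2k),s(n,M,2k)]$, which is exactly the set on which (A1$^{\prime\prime}$) guarantees $f(t)\le h(t)$. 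Therefore $f(\langle x,y\rangle)\le h(\langle x,y\rangle)$ for every such pair, and summing gives $\sum_{x,y\in C,\,x\neq y}f(\langle x,y\rangle)\le \sum_{x,y\in C,\,x\neq y}h(\langle x,y\rangle)=M\cdot E(n,C;h)$. Combining the two inequalities yields $M\cdot E(n,C;h)\ge M^2f_0-Mf(1)$, i.e. $E(n,C;h)\ge f_0M-f(1)$; taking the minimum over all $M$-point $(2k)$-designs $C$ gives ${\mathcal L}(n,M,2k;h)\ge f_0M-f(1)$, as claimed.

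There is essentially no serious obstacle here: the argument is a routine adaptation of Theorem~\ref{thm 2}, and the entire content is the observation that design inner products are confined to $[\ell(n,M,2k),s(n,M,2k)]$, so (A1) can be relaxed to (A1$^{\prime\prime}$). The mild subtlety worth stating explicitly is that absolute monotonicity of $h$ is not actually used in this particular argument — only the design property, the nonnegativity of the leading Krawtchouk coefficients, and the pointwise inequality on the restricted node set — so one should be careful only to record that $h$ is as in Theorem~\ref{thm 1} for consistency of the hypotheses, rather than because the proof invokes it. In applications this theorem becomes useful precisely when one has the sharper bounds $-1<\xi\le\ell(n,M,2k)$ from Lemmas~\ref{Lem4.1} and \ref{Lem4.2}, which shrink the interpolation window and thereby allow a better choice of $f$.
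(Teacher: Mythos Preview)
Your proof is correct and complete. The paper itself does not supply a proof of Theorem~\ref{Thm_strict1}; it states the result and moves on, treating it as the obvious modification of Theorem~\ref{thm 2} in which (A1) is weakened to (A1$^{\prime\prime}$) on the restricted node set. Your argument is exactly that routine modification, carried out in full detail, so there is nothing to compare against beyond noting that you have filled in what the paper left implicit.
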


\medskip

It follows that Theorem \ref{Thm_strict1} gives strict improvements for the even branch of the  bound \eqref{bound_odd_designs}
in the whole range $M \in (R(n,2k),R(n,2k+1))$.

\begin{theorem}
\label{Thm_strict2} Let $\ell:=\ell(n,M,2k)$ and $G(t)$ be the Hermite interpolant of $h(t)$
\[ G(\ell)=h(\ell), \ G(\beta_i)=h(\beta_i), \ G^\prime(\beta_i)=h^\prime(\beta_i), \ i=1,\ldots,k, \]
$G(t)=\sum_{i=0}^{2k} G_iQ_i^{(n,2)}(t)$. Then
$\mathcal{L}(n,M,2k;h) \geq G_0M-G(1))>M\sum_{i=0}^k \gamma_i h(\beta_i)$.
\end{theorem}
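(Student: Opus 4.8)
\textbf{Proof proposal for Theorem \ref{Thm_strict2}.} The plan is to check that the Hermite interpolant $G$ lies in the admissible class of Theorem \ref{Thm_strict1}, which yields the first inequality, and then to compare $G_0M-G(1)$ with the universal bound by applying the Levenshtein quadrature \eqref{quad_f0_beti} to $G$.

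First I would record the elementary facts about $G$. It is determined by $2k+1$ interpolation conditions (one at $\ell$, two at each $\beta_i$, $i=1,\dots,k$), so $\deg G\le 2k$ and condition (A2$^\prime$) holds trivially. Also $-1<\ell<1$: the upper bound holds because any $(2k)$-design $C$ with $|C|=M\ge 2$ contains two distinct points, so $\ell(n,M,2k)\le \ell(C)<1$ (if no such design exists, $\mathcal L(n,M,2k;h)=\infty$ and the assertion is vacuous), while $-1<\xi\le\ell(n,M,2k)$ was recorded just before Theorem \ref{Thm_strict1} as a consequence of Lemmas \ref{Lem4.1} and \ref{Lem4.2}. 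For (A1$^{\prime\prime}$): by the standard Hermite remainder (obtained by repeated application of Rolle's theorem, exactly as in the proof of Theorem \ref{thm 6}), for every $t\in[-1,1)$ that is not an interpolation node,
\[
h(t)-G(t)=\frac{h^{(2k+1)}(\zeta)}{(2k+1)!}\,(t-\ell)\prod_{i=1}^{k}(t-\beta_i)^2
\]
for some $\zeta$ in the smallest interval containing $t$ and all the nodes, hence a point of $(-1,1)$ where $h$ and all its derivatives are defined. Since $h$ is absolutely monotone, $h^{(2k+1)}(\zeta)\ge 0$, and the product of squares is nonnegative, so $h(t)-G(t)\ge 0$ whenever $t\ge\ell$. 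Thus $G(t)\le h(t)$ on $[\ell,1)$, in particular on $T_n\cap[\ell(n,M,2k),s(n,M,2k)]$, so $G$ satisfies (A1$^{\prime\prime}$) and Theorem \ref{Thm_strict1} gives $\mathcal L(n,M,2k;h)\ge G_0M-G(1)$.

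For the second inequality I would apply the quadrature \eqref{quad_f0_beti}, valid for polynomials of degree at most $2k$, to $G$:
\[
G_0=\frac{G(1)}{M}+\sum_{i=0}^{k}\gamma_iG(\beta_i)\qquad\Longleftrightarrow\qquad G_0M-G(1)=M\sum_{i=0}^{k}\gamma_iG(\beta_i).
\]
Since $G(\beta_i)=h(\beta_i)$ for $i=1,\dots,k$ and $\beta_0=-1$, the right-hand side equals $M\big(\gamma_0 G(-1)+\sum_{i=1}^{k}\gamma_i h(\beta_i)\big)$, whence
\[
G_0M-G(1)-M\sum_{i=0}^{k}\gamma_ih(\beta_i)=M\gamma_0\big(G(-1)-h(-1)\big).
\]
As $\gamma_0>0$, it suffices to prove $G(-1)>h(-1)$. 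Evaluating the remainder formula at $t=-1$ (which is legitimate since $-1<\ell$ and $-1<\beta_i$ for $i\ge1$, so $-1$ is not a node) gives $h(-1)-G(-1)=\frac{h^{(2k+1)}(\zeta)}{(2k+1)!}(-1-\ell)\prod_{i=1}^k(-1-\beta_i)^2$, where $-1-\ell<0$ and the product of squares is strictly positive; using that $h$ is strictly absolutely monotone (so $h^{(2k+1)}(\zeta)>0$) we conclude $h(-1)-G(-1)<0$, i.e. $G(-1)>h(-1)$, which finishes the proof.

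The step requiring the most care is the sign-and-strictness analysis of the interpolation error: one must confirm that the Rolle/remainder argument is valid at the endpoint $t=-1$ (it is, because $h$ has all one-sided derivatives at $-1$ and $\zeta$ lands in the open interval $(-1,1)$) and that $\ell$ need not be positioned to the left of $\beta_1$ — this is harmless, since the factor $(t-\ell)$ controls the sign while the $(t-\beta_i)^2$ remain nonnegative regardless of ordering. Strict absolute monotonicity (equivalently, that $h$ is not a polynomial of degree $\le 2k$) is exactly what upgrades $G(-1)\ge h(-1)$ to a strict inequality; without it one recovers only the non-strict bound of Theorem \ref{thm 6}. The degenerate case $\ell=\beta_j$ for some $j$ (where $G$ is a Hermite interpolant at one fewer node and $\deg G\le 2k-1$) and the case where no $M$-point $(2k)$-design exists are both immediate.
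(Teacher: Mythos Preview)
Your proof is correct and follows essentially the same route as the paper: apply Theorem~\ref{Thm_strict1} to $G$, then use the quadrature \eqref{quad_f0_beti} to reduce the strict inequality to $G(-1)>h(-1)$, which follows from $-1<\ell$ and the sign of the Hermite remainder. You actually supply details the paper leaves implicit (the verification of (A1$^{\prime\prime}$) via the remainder formula and the strictness at $t=-1$). One small correction: your parenthetical ``equivalently, that $h$ is not a polynomial of degree $\le 2k$'' is not an equivalence---an absolutely monotone $h$ can fail to be a polynomial of degree $\le 2k$ yet still have $h^{(2k+1)}(\zeta)=0$ at the relevant $\zeta$; the hypothesis you really use (and which suffices) is strict absolute monotonicity, giving $h^{(2k+1)}>0$ everywhere.
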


\medskip

\begin{proof}
It follows from Theorem \ref{Thm_strict1} with $G(t)$ that $\mathcal{L}(n,M,2k;h) \geq G_0M-G(1))$.
The degree of $G(t)$ is $2k$ and we can apply the quadrature formula \eqref{quad_f0_beti}.
We obtain
\begin{eqnarray*}
G_0M-G(1) &=& M\sum_{i=0}^k \gamma_i G(\beta_i)=M\left(\gamma_0G(-1)+\sum_{i=1}^k \gamma_i h(\beta_i)\right) \\
&=& M\left(\gamma_0\left(G(-1)-h(-1)\right)+\sum_{i=0}^k \gamma_i h(\beta_i)\right) \\
&>& M \sum_{i=0}^k \gamma_i h(\beta_i),
\end{eqnarray*}
(the inequality $G(-1)>h(-1)$ follows from the interpolation since $-1<\ell(n,M,2k)$). \end{proof}

\medskip

We show as example how the better linear programming on subintervals works in the binary ($q=2$) case for $\tau=2$.
We first derive lower bounds on the quantity $\ell(n,M,2)$. Note that the approach here
is different than the one in Lemma \ref{Lem4.2}.

Let $C \subset \mathbb{H}(n,2)$ be a 2-design of minimum distance $d$ and cardinality $M=|C| \in (R(n,2),R(n,3))=(n+1,2n)$.
Since the space $\mathbb{H}(n,2)$ is antipodal, $C$ does not possess pairs of antipodal points \cite[Lemma 6.1]{Godsil},
\cite[Theorem 5.5(iv)]{BBD}. Let $x,y \in C$ be at maximum possible distance $d(x,y)=\tilde{d}<n$.

\medskip

\begin{lemma}
\label{bound_l_hamming}
We have
\begin{equation}
\label{l-bound_even}
\ell(n,M,2) \geq 1-\sqrt{\frac{2M}{n}}
\end{equation}
for all 2-designs $C \subset \mathbb{H}(n,2)$ with even $n-\tilde{d}$, and
\begin{equation}
\label{l-bound_odd}
\ell(n,M,2) \geq 1 - \frac{\sqrt{2(nM-2)}}{n}
\end{equation}
for all 2-designs $C \subset \mathbb{H}(n,2)$ with odd $n-\tilde{d} \geq 3$.
\end{lemma}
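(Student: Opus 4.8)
The plan is to bypass the design quadrature used in Lemmas \ref{Lem4.1}--\ref{Lem4.2} and instead exploit a simple tight-frame identity satisfied by any $2$-design $C$ in $\mathbb{H}(n,2)$. Write $\phi(c):=\bigl((-1)^{c_1},\dots,(-1)^{c_n}\bigr)\in\R^n$ for $c\in\mathbb{H}(n,2)$, so that $\langle\phi(a),\phi(b)\rangle=n-2d(a,b)$ and $\|\phi(a)-\phi(b)\|^2=4d(a,b)$. Applying \eqref{eqDef} with $i=1$ and $i=2$ (equivalently, reading off the combinatorial definition on pairs of columns) gives $\sum_{c\in C}(-1)^{c_j}=0$ and $\sum_{c\in C}(-1)^{c_j+c_k}=0$ for $j\neq k$; in matrix form this is precisely $\sum_{c\in C}\phi(c)\phi(c)^{\mathsf T}=M\,I_n$.

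Next I would test this identity against the vector $w:=\phi(x)-\phi(y)$, where $x,y\in C$ realize $d(x,y)=\tilde d$. On one hand $\sum_{c\in C}\langle\phi(c),w\rangle^2=w^{\mathsf T}(M I_n)w=M\|w\|^2=4M\tilde d$. On the other hand every summand is nonnegative, and the terms from $c=x$ and $c=y$ equal $(2\tilde d)^2$ and $(-2\tilde d)^2$ respectively, since $\langle\phi(x),w\rangle=n-(n-2\tilde d)=2\tilde d$ and $\langle\phi(y),w\rangle=(n-2\tilde d)-n=-2\tilde d$. Discarding the remaining nonnegative terms gives $4M\tilde d\ge8\tilde d^2$, i.e., the key estimate $2\tilde d\le M$, equivalently $\ell(C)=1-2\tilde d/n\ge1-M/n$.

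The claimed inequalities now follow by elementary algebra, using in addition that $\tilde d\le n-1$ (as $C$, having $M<2n$, contains no antipodal pair). Indeed $2\tilde d^2\le M\tilde d\le M(n-1)<nM$, so $\tilde d<\sqrt{nM/2}$ and hence $\ell(C)\ge1-\sqrt{2M/n}$; taking the minimum over all such $C$ gives \eqref{l-bound_even}. Likewise $M(n-1)=nM-M\le nM-2$ (since $M\ge2$), so $2\tilde d^2\le nM-2$, i.e., $\tilde d\le\sqrt{(nM-2)/2}$ and $\ell(C)\ge1-\sqrt{2(nM-2)}/n$, which gives \eqref{l-bound_odd}.

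I do not anticipate a genuine obstacle: the one step worth spelling out is the reduction of the $2$-design property to the frame identity $\sum_{c\in C}\phi(c)\phi(c)^{\mathsf T}=M\,I_n$, which is particular to the binary alphabet, after which the argument is a one-line computation. This route in fact yields the single uniform bound $\ell(n,M,2)\ge1-M/n$, of which \eqref{l-bound_even} and \eqref{l-bound_odd} are weakenings, and the parity hypotheses in the statement play no role in it; they are relevant only for a purely combinatorial proof producing exactly those closed forms, in which case splitting on the parity of $n-\tilde d$ (and handling the odd case) is the one mildly delicate point.
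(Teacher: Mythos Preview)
Your argument is correct and is genuinely different from the paper's. The paper fixes $x,y\in C$ at distance $\tilde d$, then picks an auxiliary point $u\in\mathbb{H}(n,2)$ (approximately) equidistant from $-x$ and $y$, and applies the design identity \eqref{defin_f.3} with $f(t)=t^2$ at that $u$; the existence of an exactly equidistant $u$ is what forces the parity split on $n-\tilde d$. You instead pass to the frame identity $\sum_{c\in C}\phi(c)\phi(c)^{\mathsf T}=M I_n$ and test it against $w=\phi(x)-\phi(y)\in\R^n$, a vector that is \emph{not} of the form $\phi(u)$, thereby sidestepping the parity issue entirely. Conceptually the two are close---both amount to $\sum_{c}\langle\phi(c),w\rangle^2=M\|w\|^2$, the paper taking $w=\phi(u)$---but your choice of $w$ is sharper: you get the clean inequality $2\tilde d\le M$, i.e.\ $\ell(C)\ge 1-M/n$, which for $M\le 2n$ dominates both \eqref{l-bound_even} and \eqref{l-bound_odd}. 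Two minor remarks: only the $i=2$ part of \eqref{eqDef} is needed for the matrix identity (the $i=1$ condition is irrelevant here), and you do not need to invoke the no-antipodal-pair fact separately, since $2\tilde d\le M<2n$ already forces $\tilde d\le n-1$.
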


\begin{proof}
If $n-\tilde{d}=2d^\prime$ is even, we consider a point $u \in \mathbb{H}(n,2)$ such that\footnote{Here the point $-x$ is the unique point in $\mathbb{H}(n,2)$ such that $d(x,-x)=n$.} $d(-x,u)=d(y,u)=d^\prime$, and if
$n-\tilde{d}=2d^\prime+1 \geq 3$ is odd, we take a point $v \in \mathbb{H}(n,2)$ such that $d(-x,v)=d(y,v)-1=d^\prime$.
In both cases we apply \eqref{defin_f.3} from Definition \ref{def_des_alg} with the polynomial $f(t)=t^2$.

In the even case we have
\[  \frac{M}{n} =f_0|C|=\sum_{z \in C} f(\langle z,u\rangle) \geq 2f(\langle u,x \rangle)=2\left(1-\frac{2d^\prime}{n}\right)^2 \]
Then $2d^\prime \geq n-\sqrt{\frac{Mn}{2}}$ whence we get \eqref{l-bound_even}:
\[ \langle x,y \rangle =-\langle -x,y \rangle = \frac{4d^\prime}{n}-1 \geq 1-\sqrt{\frac{2M}{n}}. \]

The bound \eqref{l-bound_odd} in the case of odd $n-\tilde{d} \geq 3$ similarly follows by using
\[ f_0|C|=\sum_{z \in C} f(\langle z,v\rangle) \geq f(\langle v,x \rangle)+f(\langle v,y \rangle)
=\left(1-\frac{2d^\prime}{n}\right)^2+\left(1-\frac{2d^\prime+2}{n}\right)^2. \]
\end{proof}

\medskip

We can now find the optimal bound for $\mathcal{L}(n,M,2;h)$ for second degree polynomials which
satisfy $f(t) \leq h(t)$ for every $t \in [\ell,1)$, where $\ell$ is the lower bound for $\ell(n,M,2)$ from
Lemma \ref{bound_l_hamming}.

\medskip

\begin{theorem}
\label{bound_l2_hamming}
Let $q=2$, $n$, $M \in [R(n,2),R(n,3)]=[n+1,2n]$ and $h$ be fixed.
Let $\ell$ be the lower bound for $\ell(n,M,2)$ from Lemma \ref{bound_l_hamming}.
Then
\begin{equation}
\label{2lower}
\mathcal{L}(n,M,2;h) \geq \frac{n(M\ell+1-\ell)^2h(a_0)+M(M-n-1)h(\ell)}{M(1+n\ell^2)-n(1-\ell)^2},
\end{equation}
where $a_0=\frac{n(1-\ell)-M}{n(M\ell+1-\ell)}$.
\end{theorem}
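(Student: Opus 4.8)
The plan is to set up the associated linear program for second degree polynomials on the truncated domain and solve it explicitly. By Theorem \ref{Thm_strict1}, any polynomial $f$ of degree at most $2$ satisfying (A2$^\prime$) (which for degree $\le 2 = \tau$ is automatic) and (A1$^{\prime\prime}$), i.e. $f(t)\le h(t)$ on $T_n\cap[\ell,s(n,M,2)]$, yields $\mathcal{L}(n,M,2;h)\ge f_0 M - f(1)$. Since $\ell\le\ell(n,M,2)\le s(n,M,2)$, it suffices to require $f(t)\le h(t)$ on $[\ell,1)$. So the goal is to maximize $f_0 M - f(1)$ over quadratic $f$ with $f\le h$ on $[\ell,1)$. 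Following the pattern of Theorem \ref{thm 6}, the extremal $f$ should be the one touching $h$ at the two nodes of the relevant quadrature formula, which on the interval $[\ell,1)$ (rather than $[-1,1)$) are $\ell$ itself together with a single interior node $a_0$; concretely $f$ is the Hermite-type interpolant with $f(\ell)=h(\ell)$, $f(a_0)=h(a_0)$, $f'(a_0)=h'(a_0)$.

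First I would write down the degree-$2$ Gauss-type quadrature with one node fixed at $\ell$: there exist weights $w_0,w_1>0$ and an interior node $a_0\in(\ell,1)$ with
\[
f_0 = \frac{f(1)}{M} + w_0 f(\ell) + w_1 f(a_0)
\]
for all $f$ of degree $\le 2$. This is the $n$-dependent analogue (with the measure $d\mu_n$ restricted and a mass point forced at $\ell$) of the quadrature in \eqref{quad_f0_beti}; its existence and uniqueness for $M>R(n,2)=n+1$ follow from the same Markov/Chebyshev-system argument as in Subsection \ref{quadSec}. Then, exactly as in the proof of Theorem \ref{thm 6}: take $f$ to be the Hermite interpolant of $h$ at $\ell$ (simple) and $a_0$ (double); Rolle's theorem gives $f\le h$ on $[\ell,1)$ because $h$ is absolutely monotone, so the third derivative controls the sign of $h-f$; hence $f_0M-f(1) = M(w_0 h(\ell)+w_1 h(a_0))$, and this is optimal among all quadratics $\le h$ on $[\ell,1)$ by applying the quadrature to the difference. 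It remains to compute $a_0$, $w_0$, $w_1$ explicitly in terms of $n,M,\ell$.

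The computational heart of the argument — and the step I expect to be the main obstacle — is pinning down these three quantities. The cleanest route is to use the first two moments of $d\mu_n$: $\int 1\,d\mu_n = 1$, $\int t\,d\mu_n = Q_1^{(n,q)}$-orthogonality gives $\int t\,d\mu_n = 0$, and $\int t^2\,d\mu_n = 1/n$ (this is where the specific $1/n$ and the $n+1$ threshold enter). Imposing \eqref{quad_f0_beti}-type equalities for $f(t)=1,t,t^2$ gives three linear equations:
\[
1 = \tfrac1M + w_0 + w_1,\qquad 0 = \tfrac1M + w_0\ell + w_1 a_0,\qquad \tfrac1n = \tfrac1M + w_0\ell^2 + w_1 a_0^2 .
\]
Solving the first two for $w_0,w_1$ in terms of $a_0$, substituting into the third, and simplifying yields $a_0 = \dfrac{n(1-\ell)-M}{n(M\ell+1-\ell)}$ (the stated value) and, after back-substitution,
\[
M w_0 = \frac{M(M-n-1)}{M(1+n\ell^2)-n(1-\ell)^2},\qquad
M w_1 = \frac{n(M\ell+1-\ell)^2}{M(1+n\ell^2)-n(1-\ell)^2},
\]
which is exactly the denominator and the two numerator coefficients in \eqref{2lower}. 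Thus $\mathcal{L}(n,M,2;h)\ge M w_1 h(a_0)+M w_0 h(\ell)$ is precisely \eqref{2lower}. The algebra is routine but must be done carefully to confirm both that $a_0\in(\ell,1)$ and that $w_0,w_1>0$ (the latter needs $M>n+1$, consistent with the hypothesis $M\in[n+1,2n]$; the boundary case $M=n+1$ degenerates to the Rao bound and should be checked separately or noted).
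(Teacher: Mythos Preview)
Your approach is correct and matches the paper's: construct the degree-$2$ Hermite interpolant of $h$ that passes through $(\ell,h(\ell))$ and is tangent to $h$ at $(a_0,h(a_0))$, then invoke Theorem~\ref{Thm_strict1}. The paper's proof states only this much and omits the derivation of $a_0$ and the weights; your quadrature computation (solving the three moment equations for $f=1,t,t^2$ with $f_0=1,0,1/n$) is exactly the calculation needed to recover the explicit formula \eqref{2lower}, and also explains where the stated value of $a_0$ comes from.
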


\medskip

\begin{proof}
The second degree polynomial which graph passes
through the point $(\ell,h(\ell))$ and touches the graph of $h(t)$ at the point $(a_0,h(a_0))$
satisfies the conditions of Theorem~\ref{Thm_strict1} and gives the desired bound.
\end{proof}

\medskip

\begin{corollary}
\label{Cor5.10} If $q=2$ and $M/n \to \xi$, $\xi \in (1,2)$, as $n$ and $M$
tend to infinity simultaneously, then
\begin{equation}
\label{2lower_asymp}
\liminf_{n \to \infty} \frac{\mathcal{L}(n,M,2;h)}{n} \geq h(0)\xi .
\end{equation}
\end{corollary}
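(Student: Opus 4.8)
The plan is to take the limit in the explicit lower bound \eqref{2lower} from Theorem~\ref{bound_l2_hamming}, tracking the leading-order behavior of each ingredient as $n,M\to\infty$ with $M/n\to\xi$. First I would record the asymptotics of the parameter $\ell$ coming from Lemma~\ref{bound_l_hamming}: in either the even or odd case, $\ell = 1-\sqrt{2M/n}+o(1) = 1-\sqrt{2\xi}+o(1)$, so $\ell\to 1-\sqrt{2\xi}$, a fixed number in $(-1,1)$ since $\xi\in(1,2)$. In particular $1-\ell\to\sqrt{2\xi}$ stays bounded, while $M\ell$ grows like $n$ times a constant; this tells us which terms dominate.

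Next I would estimate the node $a_0=\dfrac{n(1-\ell)-M}{n(M\ell+1-\ell)}$. The numerator is $n(1-\ell)-M = O(n)$, and the denominator is $n(M\ell+1-\ell)\sim nM\ell \sim \xi n^2\ell$, which is of order $n^2$; hence $a_0 = O(1/n)\to 0$. So $h(a_0)\to h(0)$ by continuity of $h$ on $[-1,1)$ (note $0$ is interior). For the right-hand side of \eqref{2lower} divided by $n$, I would factor: the numerator is $n(M\ell+1-\ell)^2 h(a_0) + M(M-n-1)h(\ell)$. The first term is $\sim n\cdot (M\ell)^2\cdot h(0) \sim \xi^2 n^3\ell^2 h(0)$, while the second is $M(M-n-1)h(\ell)\sim \xi(\xi-1)n^2 h(\ell) = O(n^2)$ — negligible compared to the $n^3$ term. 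The denominator is $M(1+n\ell^2)-n(1-\ell)^2 \sim M n\ell^2 \sim \xi n^2\ell^2$, of order $n^2$. Therefore
\[
\frac{1}{n}\cdot\frac{n(M\ell+1-\ell)^2 h(a_0)+M(M-n-1)h(\ell)}{M(1+n\ell^2)-n(1-\ell)^2}
\;\sim\; \frac{\xi^2 n^3\ell^2 h(0)}{n\cdot \xi n^2\ell^2}
\;=\; \xi\, h(0),
\]
and taking $\liminf$ gives \eqref{2lower_asymp}.

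The only subtle point is that $\ell^2$ appears in both the dominant numerator and dominant denominator terms and cancels, so a priori one might worry about the degenerate case $\ell\to 0$, i.e. $\xi=1/2$; but since $\xi\in(1,2)$ we have $1-\sqrt{2\xi}\ne 0$ (indeed $\sqrt{2\xi}\in(\sqrt2,2)$, so $\ell\in(-1,1-\sqrt2)\subset(-1,0)$ is bounded away from $0$), and the cancellation is harmless. The main obstacle — really the only thing requiring care — is bookkeeping the lower-order corrections: one should confirm that the $o(1)$ error in $\ell$ and the difference between the even/odd formulas in Lemma~\ref{bound_l_hamming} contribute only to lower-order terms in the ratio, which follows since $\ell$ enters continuously and the ratio is a continuous function of $(\ell, 1/n, M/n)$ away from its (non-attained) singular locus. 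A clean way to organize this is to substitute $M=\xi n + o(n)$ and $\ell=\ell_\infty+o(1)$ with $\ell_\infty=1-\sqrt{2\xi}$, divide numerator and denominator of $\mathcal{L}(n,M,2;h)/n$ by $n^2$, and pass to the limit term by term, using continuity of $h$ at $0$ and at $\ell_\infty$.
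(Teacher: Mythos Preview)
Your proposal is correct and follows exactly the paper's approach: the paper's proof simply states that the asymptotic of $\ell$ from Lemma~\ref{bound_l_hamming} is $1-\sqrt{2\xi}$ and that plugging this into \eqref{2lower} yields \eqref{2lower_asymp}. You have carried out the same substitution with the leading-order analysis made explicit, including the verification that $a_0\to 0$ and the identification of the dominant $n^3$ and $n^2$ terms in numerator and denominator.
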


\medskip

\begin{proof}
The asymptotic of the bounds from Lemma \ref{bound_l_hamming} is $1-\sqrt{2\xi}:=\ell$. We plug this
in (\ref{2lower}) to obtain \eqref{2lower_asymp}.
\end{proof}

\section{Upper bounds for ${\mathcal U}(n,M,\tau;h)$}

To apply Theorem \ref{thm 4} we need upper bounds on $s(n,M,\tau)$. Such bounds can be obtained
as in Lemma \ref{Lem4.2} (see \cite[Lemma 4.1]{BBD}). We apply here different approach analogous
to Lemma \ref{bound_l_hamming} in the case $q=2$.

\begin{lemma}
\label{bound_u_hamming}
Let $C \subset \mathbb{H}(n,2)$ be a 2-design of minimum distance $d$ and cardinality $M=|C| \in (R(n,2),R(n,3))=(n+1,2n)$.
Then
\begin{equation}
\label{u-bound_even}
s(n,M,2) \leq -1 + \sqrt{\frac{2(M-2)}{n}}
\end{equation}
if $d$ is even, and
\begin{equation}
\label{u-bound_odd}
s(n,M,2) \leq -1 + \frac{1}{n}\sqrt{\frac{2(M-2)(nM-2)}{M}}
\end{equation}
if $d \geq 3$ is odd.
\end{lemma}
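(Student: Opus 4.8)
The plan is to adapt the strategy of Lemma~\ref{bound_l_hamming}: pick two codewords realizing the maximal inner product, introduce a convenient auxiliary point $w\in\mathbb{H}(n,2)$, and evaluate the design identity \eqref{defin_f.3} at $w$ with quadratic and linear polynomials. Let $x,y\in C$ with $d(x,y)=d=d(C)$, so $\langle x,y\rangle=1-\tfrac{2d}{n}=s(C)$. When $d$ is even I take $w$ to agree with both $x$ and $y$ on the $n-d$ coordinates where they coincide and, on the $d$ coordinates where they differ, to agree with $x$ on exactly $d/2$ of them and with $y$ on the remaining $d/2$; then $d(x,w)=d(y,w)=d/2$, whence $\langle x,w\rangle=\langle y,w\rangle=1-\tfrac dn=\tfrac{1+s(C)}{2}=:c$, and moreover $w\notin C$ because $d(x,w)=d/2<d=d(C)$. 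When $d\ge 3$ is odd I split the $d$ differing coordinates as $\tfrac{d+1}{2}$ and $\tfrac{d-1}{2}$, so $d(x,w)=\tfrac{d-1}{2}$ and $d(y,w)=\tfrac{d+1}{2}$; then $\langle x,w\rangle+\langle y,w\rangle=1+s(C)$ and $\langle x,w\rangle-\langle y,w\rangle=\tfrac2n$, and again $w\notin C$ since $\tfrac{d-1}{2}<d$.

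Next I apply \eqref{defin_f.3} at $w$ with $f(t)=t^2$, for which $f_0=1/n$ (this is precisely the computation already used in the proof of Lemma~\ref{bound_l_hamming}), obtaining $\sum_{z\in C}\langle z,w\rangle^2=M/n$, and with the degree-one polynomial $f(t)=t=Q_1^{(n,2)}(t)$, for which $f_0=0$, obtaining $\sum_{z\in C}\langle z,w\rangle=0$. This second identity is the ingredient that produces the term $-2$: since $w\notin C$ and $x\ne y$, the set $C\setminus\{x,y\}$ has exactly $M-2$ elements with $\sum_{z\in C\setminus\{x,y\}}\langle z,w\rangle=-(\langle x,w\rangle+\langle y,w\rangle)$, so Cauchy--Schwarz yields
\[ \sum_{z\in C\setminus\{x,y\}}\langle z,w\rangle^2\ \ge\ \frac{\bigl(\langle x,w\rangle+\langle y,w\rangle\bigr)^2}{M-2}. \]
Combining with $\sum_{z\in C}\langle z,w\rangle^2=M/n$ gives
\[ \frac{M}{n}\ \ge\ \langle x,w\rangle^2+\langle y,w\rangle^2+\frac{\bigl(\langle x,w\rangle+\langle y,w\rangle\bigr)^2}{M-2}. \]

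In the even case the right-hand side equals $2c^2+\tfrac{4c^2}{M-2}=\tfrac{2Mc^2}{M-2}$, so $c^2\le\tfrac{M-2}{2n}$; since $c=\tfrac{1+s(C)}{2}\ge0$ this rearranges to $s(C)\le -1+\sqrt{2(M-2)/n}$, which is \eqref{u-bound_even}. In the odd case I use $\langle x,w\rangle^2+\langle y,w\rangle^2=\tfrac12\bigl((1+s(C))^2+\tfrac{4}{n^2}\bigr)$ and $(\langle x,w\rangle+\langle y,w\rangle)^2=(1+s(C))^2$; substituting and solving the resulting linear inequality for $(1+s(C))^2$ gives $(1+s(C))^2\le\tfrac{2(M-2)(nM-2)}{Mn^2}$, i.e.\ \eqref{u-bound_odd}.

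The computations are routine once the set-up is fixed; the only thing that genuinely needs care is verifying that the auxiliary point $w$ lies outside $C$ and is distinct from $x$ and $y$ — which is exactly where the hypotheses that $d$ is the minimum distance and that $d\ge 2$ (even), resp.\ $d\ge 3$ (odd), are used. I expect the ``hard part'' to be an observation rather than an obstacle: the naive estimate $M/n\ge\langle x,w\rangle^2+\langle y,w\rangle^2$ only yields $s(C)\le -1+\sqrt{2M/n}$, and the extra $-2$ in \eqref{u-bound_even}--\eqref{u-bound_odd} is recovered precisely by also exploiting $\sum_{z\in C}\langle z,w\rangle=0$ through Cauchy--Schwarz.
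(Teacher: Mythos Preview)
Your proof is correct and uses the same auxiliary ``midpoint'' $w$ as the paper. The only difference is in packaging: the paper applies the design identity \eqref{defin_f.3} with the single test polynomial $f(t)=(t-a)^2$ (so $f_0=a^2+1/n$), drops all but the two terms $f(\langle x,w\rangle)+f(\langle y,w\rangle)$, and then optimizes over the free parameter $a$; you instead apply \eqref{defin_f.3} with $f(t)=t$ and $f(t)=t^2$ separately and combine the two identities via Cauchy--Schwarz on $C\setminus\{x,y\}$. These are equivalent---at the optimal $a=-\sqrt{2/(n(M-2))}$ the paper's inequality is exactly the non-negativity of the variance of $\langle z,w\rangle$ over $z\in C\setminus\{x,y\}$, which is precisely what your Cauchy--Schwarz step encodes---so you reach the same bounds without an explicit optimization. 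Your route is arguably a bit cleaner. One small remark: the observation $w\notin C$ is true but not actually needed anywhere; $|C\setminus\{x,y\}|=M-2$ follows simply from $x\neq y$.
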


\begin{proof}
Let $x,y \in C$ be such that $d(x,y)=d$. If $d=2d^\prime$ is even, we consider
a point $u \in \mathbb{H}(n,2)$ such that $d(x,u)=d(y,u)=d^\prime$, and if
$d=2d^\prime+1$ is odd, we take a point $v \in \mathbb{H}(n,2)$ such that $d(x,v)=d(y,v)-1=d^\prime$.
In both cases we  apply \eqref{defin_f.3} from Definition \ref{def_des_alg} with a polynomial $f(t)=(t-a)^2$, where $a \in [-1,\beta_1]$
will be chosen to give the best possible upper bound on $s(n,M,2)$. Note that $\beta_1=s=-\frac{2n-M}{n(M-2)}$
in this case.

In the even case we have
\[ f_0|C|=\sum_{z \in C} f(\langle z,u\rangle) \geq 2f(\langle u,x \rangle) \]
for any $a<1-\frac{d}{n}$, whence $M(a^2+1/n) \geq 2\left(\langle u,x \rangle-a\right)^2$ (we used $f_0=a^2 + 1/n$).
The optimization over $a$ gives
\[ \langle u,x \rangle =1-\frac{d}{n} \leq \sqrt{\frac{M-2}{2n}} \]
(attained for $a= -\sqrt{\frac{2}{n(M-2)}}$). Then the equality
$\langle x,y \rangle =2\langle u,x \rangle -1$ implies \eqref{u-bound_even}.

The bound \eqref{u-bound_odd} in the case of odd $d$ similarly follows by using
\[ f_0|C|=\sum_{z \in C} f(\langle z,v\rangle) \geq f(\langle v,x \rangle)+f(\langle v,y \rangle)
=f\left(1-\frac{2d^\prime}{n}\right)+f\left(1-\frac{2d^\prime+2}{n}\right), \]
for any $a<1-\frac{d+1}{n}$.
\end{proof}

\medskip

The bounds for $\ell(n,M,2)$ and $s(n,M,2)$ from Lemmas \ref{bound_l_hamming} and \ref{bound_u_hamming}, respectively,
imply an easy upper bound on ${\mathcal U}(n,M,2;h)$ by Theorem \ref{thm 4} with a first degree polynomial.

\medskip

\begin{theorem}
\label{Thm6.2.} Let $q=2$, $n$, $M \in [R(n,2),R(n,3)]=[n+1,2n]$ and $h$ be fixed.
Let $\ell$ and $s$ be the lower and upper bounds for
$\ell(n,M,2)$ and $s(n,M,2)$ from Lemmas \ref{bound_l_hamming} and \ref{bound_u_hamming}, respectively.
Then
\begin{equation}
\label{2upper}
\mathcal{U}(n,M,2;h) \leq \frac{(M-1)(sh(\ell)-\ell h(s))+h(\ell)-h(s)}{s-\ell}.
\end{equation}
\end{theorem}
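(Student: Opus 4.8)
The plan is to apply Theorem~\ref{thm 4} with a carefully chosen linear polynomial $g(t)$ and to observe that $\ell := \ell(n,M,2)$ and $s := s(n,M,2)$ from Lemmas~\ref{bound_l_hamming} and~\ref{bound_u_hamming} are, respectively, a lower bound for the true minimum inner product and an upper bound for the true maximum inner product of any $2$-design with these parameters. Since $T_n \cap [\ell(n,M,2),s(n,M,2)] \subset [\ell,s]$, it suffices to construct $g$ of degree $1$ with $g(t) \geq h(t)$ on the \emph{whole} interval $[\ell,s]$ and with nonpositive Krawtchouk coefficients $g_i$ for $i \geq 3$ (i.e., for $i \geq 2$, since $\deg g = 1$); the latter is automatic as there are no such coefficients.

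First I would take $g$ to be the secant line through the two points $(\ell,h(\ell))$ and $(s,h(s))$, namely
\[
g(t) = \frac{h(s)-h(\ell)}{s-\ell}\,(t-\ell) + h(\ell).
\]
Because $h$ is absolutely monotone on $[-1,1)$ it is in particular convex there, so the chord lies above the graph: $g(t) \geq h(t)$ for all $t \in [\ell,s]$. Hence condition {\rm (B1)} of Theorem~\ref{thm 4} holds, and {\rm (B2)} holds trivially. Therefore $g \in B_{n,M,2;h}$ and Theorem~\ref{thm 4} yields $\mathcal{U}(n,M,2;h) \leq g_0 M - g(1)$.

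It remains to compute $g_0$ and $g(1)$. The degree-one part is $g(t) = g_0 Q_0^{(n,2)}(t) + g_1 Q_1^{(n,2)}(t)$ with $Q_0 \equiv 1$ and $Q_1^{(n,2)}(t) = t$ (from \eqref{KrawInit} with $q=2$, $z = n(1-t)/2$, one gets $K_1^{(n,2)}(z) = n - 2z = nt$, and $r_1 = n$, so $Q_1^{(n,2)}(t) = t$). Writing $g(t) = mt + b$ with $m = (h(s)-h(\ell))/(s-\ell)$ and $b = h(\ell) - m\ell$, we read off $g_0 = b$ and $g(1) = m + b$, so $g_0 M - g(1) = b(M-1) - m$. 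Substituting $b$ and $m$ and clearing the common denominator $s - \ell$ gives
\[
g_0 M - g(1) = \frac{(M-1)\bigl(h(\ell)(s-\ell) - \ell(h(s)-h(\ell))\bigr) - (h(s)-h(\ell))}{s-\ell}
= \frac{(M-1)(sh(\ell) - \ell h(s)) + h(\ell) - h(s)}{s-\ell},
\]
which is exactly \eqref{2upper}.

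This proof is essentially a routine specialization; the only genuine point to verify is the convexity of $h$ on $[\ell,s]$, which follows immediately from absolute monotonicity ($h'' \geq 0$), so there is no real obstacle. I would simply remark that one should check $-1 \le \ell < s < 1$ so that $h$ is evaluated in its domain $[-1,1)$, which is guaranteed by Lemmas~\ref{bound_l_hamming} and~\ref{bound_u_hamming} together with $M \in [n+1,2n]$.
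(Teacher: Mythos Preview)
Your proof is correct and follows exactly the same approach as the paper: take the secant line through $(\ell,h(\ell))$ and $(s,h(s))$ as the polynomial $g$ in Theorem~\ref{thm 4}, with (B1) ensured by convexity of $h$ and (B2) vacuous since $\deg g=1<\tau+1=3$. You simply supply the explicit computation of $g_0M-g(1)$ that the paper omits.
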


\medskip

\begin{proof}
The first degree polynomial which graph passes
through the points $(\ell,h(\ell))$ and $(s,h(s))$
satisfies the conditions of Theorem~\ref{thm 4} and gives the desired bound.
\end{proof}

\medskip

\begin{corollary}\label{Cor6.3} If $q=2$ and $n$ and $M=\xi n$, $\xi \in (1,2)$ is constant,
tend simultaneously to infinity, then
\begin{equation}
\label{2upper_asymp}
\mathcal{U}(n,M,2;h) \leq c_1n+c_2+o(1),
\end{equation}
where $c_1=\frac{\xi[(\sqrt{2\xi}-1)h(1-\sqrt{2\xi})-(1-\sqrt{2\xi})h(\sqrt{2\xi}-1)]}{2(\sqrt{2\xi}-1)}$ and
$c_2=\frac{(2-\sqrt{2\xi})h(1-\sqrt{2\xi})-\sqrt{2\xi}h(\sqrt{2\xi}-1)}{2(\sqrt{2\xi}-1)}$.
\end{corollary}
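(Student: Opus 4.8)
The plan is to combine Theorem~\ref{Thm6.2.} with the asymptotic forms of the bounds $\ell$ and $s$ from Lemmas~\ref{bound_l_hamming} and~\ref{bound_u_hamming}, keeping track of terms to order $n$ and to order $1$. First I would record the leading asymptotics: from \eqref{l-bound_even}, $\ell = 1-\sqrt{2M/n} = 1-\sqrt{2\xi} + o(1)$, and from \eqref{u-bound_even}, $s = -1+\sqrt{2(M-2)/n} = -1+\sqrt{2\xi} + o(1)$ (the odd-distance variants \eqref{l-bound_odd} and \eqref{u-bound_odd} differ only in lower-order terms, so the same limits hold; to be safe one takes the worse of the two in each case, but both give the same value of $\ell$ and $s$ up to $o(1)$). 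In particular $s-\ell = 2\sqrt{2\xi}-2 + o(1) = 2(\sqrt{2\xi}-1) + o(1)$, which is bounded away from zero precisely because $\xi>1$; this is what makes the denominator in \eqref{2upper} harmless.

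Next I would substitute these into the right-hand side of \eqref{2upper}. Write the bound as
\[
\mathcal{U}(n,M,2;h) \le \frac{(M-1)\bigl(s\,h(\ell) - \ell\, h(s)\bigr)}{s-\ell} + \frac{h(\ell)-h(s)}{s-\ell}.
\]
The second summand is $O(1)$ and converges to $\bigl(h(1-\sqrt{2\xi}) - h(\sqrt{2\xi}-1)\bigr)/\bigl(2(\sqrt{2\xi}-1)\bigr)$, which I would absorb into the $c_2$ term. For the first summand, $M-1 = \xi n - 1$, so it equals
\[
\frac{\xi n\,(s\,h(\ell)-\ell\,h(s))}{s-\ell} - \frac{s\,h(\ell)-\ell\,h(s)}{s-\ell},
\]
and here the first piece contributes the $c_1 n$ term once $s$, $\ell$, $s-\ell$ are replaced by their limits, giving
\[
c_1 = \frac{\xi\bigl[(\sqrt{2\xi}-1)h(1-\sqrt{2\xi}) - (1-\sqrt{2\xi})h(\sqrt{2\xi}-1)\bigr]}{2(\sqrt{2\xi}-1)},
\]
while the second piece is again $O(1)$ and, combined with the leftover $O(1)$ term from before, yields $c_2$ after simplification. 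Collecting the two $O(1)$ contributions,
\[
c_2 = \frac{(2-\sqrt{2\xi})h(1-\sqrt{2\xi}) - \sqrt{2\xi}\,h(\sqrt{2\xi}-1)}{2(\sqrt{2\xi}-1)},
\]
which is the claimed constant; the $o(1)$ error collects all the lower-order corrections in $\ell$, $s$, and $M-1$, together with the continuity of $h$ on the closed subinterval of $[-1,1)$ where all relevant arguments eventually lie (note $1-\sqrt{2\xi}\in(-1,1)$ and $\sqrt{2\xi}-1\in(-1,1)$ for $\xi\in(1,2)$, so $h$ and its values are finite and the substitution is legitimate).

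The only genuinely delicate point is bookkeeping: one must expand $s\,h(\ell)-\ell\,h(s)$ and $h(\ell)-h(s)$ carefully enough to separate the $\Theta(n)$ part from the $\Theta(1)$ part without dropping a surviving constant, and one must confirm that the $o(1)$ remainders really are $o(1)$ — this uses that $h$ is continuous (indeed smooth) on a compact subset of $[-1,1)$ bounded away from $1$, since $\sqrt{2\xi}-1 < 1$, so $h(s)$ stays bounded. I expect this to be routine but slightly tedious; there is no conceptual obstacle, as the structural work was already done in Theorem~\ref{Thm6.2.} and Lemmas~\ref{bound_l_hamming}–\ref{bound_u_hamming}. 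I would present it as: state the asymptotics of $\ell$, $s$, $s-\ell$; plug into \eqref{2upper}; expand and identify $c_1 n + c_2 + o(1)$.
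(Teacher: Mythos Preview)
Your proposal is correct and follows essentially the same approach as the paper: the paper's proof simply states that the asymptotics of the bounds from Lemmas~\ref{bound_l_hamming} and~\ref{bound_u_hamming} are $\ell=1-\sqrt{2\xi}$ and $s=-1+\sqrt{2\xi}$ and then plugs these into \eqref{2upper}. You carry out exactly this substitution, only with more explicit bookkeeping in separating the $c_1n$ and $c_2$ contributions.
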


\medskip

\begin{proof}
The asymptotics of the bounds from Lemmas \ref{bound_l_hamming} and \ref{bound_u_hamming}
are $1-\sqrt{2\xi}:=\ell$ and $-1+\sqrt{2\xi}:=s$, respectively. We plug these in (\ref{2upper}) to obtain
\eqref{2upper_asymp}.
\end{proof}

\medskip

\begin{remark}
\label{rem_strip}
Theorems \ref{bound_l2_hamming} and \ref{Thm6.2.} (or Corollaries \ref{Cor5.10} and \ref{Cor6.3}, respectively)
give a strip where all 2-designs in $\mathbb{H}(n,2)$ have their energies. Such strips can be
obtained for higher strengths $\tau$ and in other spaces $\mathbb{H}(n,q)$ similar to \cite[Theorem 3.7]{BDHSS2}.
\end{remark}

\section{Asymptotics in the binary case}

For the binary case $\mathbb{H}(n,2)$,   we derive   asymptotics    for the lower bound in \eqref{bound_codes_odd}  as $n$ and the cardinality of codes goes to infinity. Note that in this case  that the Rao bounds satisfy
 $R(n,2k)=\sum_{j=0}^k\binom{n}{j}=\frac{n^k}{k!} +O(n^{k-1})$ and $R(n,2k-1)=\sum_{j=0}^{k-1}\binom{n-1}{j}=\frac{2n^{k-1}}{(k-1)!}+O(n^{k-2})$ as $n\to\infty$ and $k$ fixed. We consider sequence of codes of cardinalities $(M_n)$ satisfying
 $M_n\in I_{\tau}=(R(n,\tau),R(n,\tau+1))$ for $n=1, 2, 3, \ldots$ and
\begin{equation}
\label{asymp-1}
\lim_{n \to \infty} \frac{M_n}{n^{\lfloor \tau/2\rfloor}}=
\begin{cases} \frac{2}{(k-1)!}+\delta, & \tau=2k-1,\\[6pt]
\frac{1}{k!}+\delta, &  \tau=2k,
\end{cases}
\end{equation}
where $\delta \geq 0$.

\begin{lemma} \label{lem7.1} For  $i=0,1,2,\ldots$,   we have
\begin{equation}\label{Qasymp}
Q_i^{(n,2)}(t)=t^i + O(1/n),  \qquad t\in [-1,1],
\end{equation}
as $n\to \infty$ where the implied constants depend on $i$ but not $t$.

If, for $i=0,1,2,\ldots$, $P_i^{(n)}(t)$    is one of the adjacent polynomials
$Q_i^{(1,0,n,2)}(t)$, $Q_i^{(1,1,n,2)}(t)$, or $Q_i^{(0,1 ,n,2)}(t)$, then $$P_i^{(n)}(t)=t^i + O(1/n)$$ as $n\to \infty$.
\end{lemma}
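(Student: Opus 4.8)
The plan is to establish the asymptotic expansion \eqref{Qasymp} for the normalized Krawtchouk polynomials $Q_i^{(n,2)}(t)$ by induction on $i$, using the three-term recurrence \eqref{Kraw3term}, and then to reduce the adjacent-polynomial statements to the same analysis. First I would specialize the recurrence to $q=2$ and rewrite it in the variable $t$ via $z=n(1-t)/2$. From \eqref{KrawInit} we have $K_0^{(n,2)}(z)=1$ and $K_1^{(n,2)}(z)=n-2z=nt$, so $Q_0^{(n,2)}(t)=1$ and $Q_1^{(n,2)}(t)=K_1^{(n,2)}(z)/r_1 = nt/n = t$; these give the base cases exactly (no error term). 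For the inductive step, I would substitute $z=n(1-t)/2$ into \eqref{Kraw3term} with $q=2$, so that the bracket $[i+(n-i)-2z] = [n-2z] = nt$, divide through by the normalizing constants $r_{i+1}, r_i, r_{i-1}$ (recalling $r_i=\binom{n}{i}$ for $q=2$), and track how the coefficients behave as $n\to\infty$ with $i$ fixed. The ratios $r_i/r_{i+1}$ and $r_{i-1}/r_{i+1}$ are rational functions of $n$ of known leading order, and after simplification the recurrence for the $Q_i^{(n,2)}$ should take the form $Q_{i+1}^{(n,2)}(t) = t\,Q_i^{(n,2)}(t) - c_{i,n} Q_{i-1}^{(n,2)}(t)$ with $c_{i,n}=O(1/n)$; feeding in $Q_i^{(n,2)}(t)=t^i+O(1/n)$ and $Q_{i-1}^{(n,2)}(t)=t^{i-1}+O(1/n)$ and using $|t|\le 1$ to bound all the error terms uniformly then yields $Q_{i+1}^{(n,2)}(t)=t^{i+1}+O(1/n)$.

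For the adjacent polynomials, the key observation is that each of $Q_i^{(1,0,n,2)}$, $Q_i^{(1,1,n,2)}$, $Q_i^{(0,1,n,2)}$ is, up to normalization, an ordinary Krawtchouk polynomial $K_i^{(n-1,2)}$ or $K_i^{(n-2,2)}$ evaluated at $z-1$ or $z$ — see \eqref{adjacent}--\eqref{adjacent3}. So I would first apply the result just proved (with $n$ replaced by $n-1$ or $n-2$, which does not change the $O(1/n)$ order since $i$ is fixed) to get that the normalized polynomial $K_i^{(n-1,2)}(w)/r_i^{(n-1)} = \tilde t^{\,i}+O(1/n)$ where $\tilde t = 1-2w/(n-1)$, and then compare the argument shift: substituting $w=z-1$ versus $w=z$ changes $\tilde t$ by $O(1/n)$, and the ratio of the normalization $\sum_{j=0}^i\binom{n}{j}(q-1)^j$ (or $\binom{n-1}{i}(q-1)^i$) appearing in \eqref{adjacent}--\eqref{adjacent3} to the natural normalization $r_i^{(n-1)}$ is $1+O(1/n)$ — here one uses that $\sum_{j=0}^i \binom{n}{j} = \binom{n}{i}(1+O(1/n))$ for fixed $i$. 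Collecting these comparisons, and using $|t|\le 1$ so that $|\tilde t|\le 1+O(1/n)$ and powers stay bounded, gives $P_i^{(n)}(t)=t^i+O(1/n)$ for each of the three adjacent families.

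The main obstacle I anticipate is bookkeeping rather than conceptual: making sure the $O(1/n)$ bounds are genuinely uniform in $t\in[-1,1]$ through the induction, i.e. that the implied constant at step $i+1$ depends only on $i$ and on the (fixed, $n$-independent) constants appearing in the recurrence, not on $t$. Since $|t|\le 1$, every product $t\cdot(\text{error})$ is bounded by the error, and the coefficients $c_{i,n}$ are $O(1/n)$ with constant depending on $i$ only, so the induction closes cleanly; one just has to write the recurrence coefficients out carefully enough to see the cancellation that produces the leading term $t\,Q_i^{(n,2)}(t)$ and the $O(1/n)$ size of the $Q_{i-1}$-coefficient. A secondary small point is the argument shift $z\mapsto z-1$ in the adjacent polynomials: since $w=z-1$ gives $\tilde t = 1 - 2(z-1)/(n-1) = t\cdot\frac{n}{n-1} + \frac{2}{n-1}-\frac{2}{n(n-1)}\cdot 0$ (more precisely $\tilde t = t + O(1/n)$ uniformly for $t\in[-1,1]$), one must note that $\tilde t^{\,i} = t^i + O(1/n)$ uniformly, which again uses only $|t|\le 1$ and $i$ fixed. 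With these two uniformity checks in place the proof is complete.
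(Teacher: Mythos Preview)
Your proposal is correct and follows essentially the same route as the paper: induction on $i$ via the three-term recurrence (the paper writes it as $Q_i^{(n,2)}(t)=\frac{nt}{n-i+1}Q_{i-1}^{(n,2)}(t)-\frac{i-1}{n-i+1}Q_{i-2}^{(n,2)}(t)=(t+O(1/n))Q_{i-1}+O(1/n)Q_{i-2}$, so note the leading coefficient is $t+O(1/n)$ rather than exactly $t$, though this makes no difference to the induction), and then the adjacent polynomials are handled by the same argument-shift and normalization-ratio comparison you outline. The uniformity bookkeeping you flag is exactly what is needed and is straightforward since $|t|\le 1$.
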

\begin{proof}
Recalling from \eqref{Kraw} that $Q_i^{(n,2)}(t):=\frac{1}{r_i}K_i(z)=\frac{1}{r_i}K_i(n(1-t)/2)$   and using  \eqref{Kraw3term},
it follows that the polynomials $Q_i^{(n,2)}(t)$, $0\le i\le n$, satisfy the following three-term recurrence relation
\begin{equation}\label{Q3term}
\begin{split}
Q_i^{(n,2)}(t)&= \frac{nt}{n-i+1} \cdot Q_{i-1}^{(n,2)}(t)-\frac{i-1}{n-i+1}\cdot Q_{i-2}^{(n,2)}(t)\\
&=\left(t+O(1/n)\right)Q_{i-1}^{(n,2)}(t)+O(1/n)Q_{i-2}^{(n,2)}(t),
\end{split}
\end{equation}
where $Q_0^{(n,2)}(t)=1$ and $Q_1^{(n,2)}(t)=t$.
Then \eqref{Qasymp} follows by induction on $i$ using \eqref{Q3term} and the  form of
$Q_i^{(n,2)}(t)$ for $i=0$ and 1.

From  \eqref{adjacent}, we have
$$Q_i^{(1,0, n,2)}(t)=\frac{K_i^{(n-1,2)}(z-1)}{R(n,2i)}=\frac{r_i^{(n-1)}}{R(n,2i)}Q_i^{(n-1,2)}(t'),$$
where $\frac{n(1-t')}{2} =z'=z-1$.  Observing that $\frac{r_i^{(n-1)}}{R(n,2i)}=1+O(1/n)$ and that $t'=t+2/n$
shows that $Q_i^{(1,0, n,2)}(t)=Q_i^{(n,2)}(t) +O(1/n)$ as $n\to\infty$.  The result for the other two families of adjacent polynomials
follows similarly.
\end{proof}

We first deduce the limiting behavior of the nodes $\alpha_i$ and $\beta_i$ appearing in the lower bound.
Recall from Subsection~\ref{quadSec} that   the nodes $\alpha_i=\alpha_i(n,2k-1,M)$, $i=0,\ldots, k-1$,  are defined for  positive integers $n$, $k$, and $M$ satisfying $M>R(n,2k-1)$ and that the nodes $\beta_i=\beta_i(n,2k,M)$, $i=0,\ldots, k$,  are defined if $M>R(n,2k)$.

\medskip

\begin{lemma}
\label{asymp_alfi-beti} Let $\tau$ be a positive integer and $(M_n)$ be a sequence as in \eqref{asymp-1}.
If $\tau=2k-1$ for some integer $k$, then
\begin{equation}
\label{alpha0}
\begin{split}
 \lim_{n \to \infty} \alpha_0(n,2k-1,M_n) &=-1/(1+ \delta(k-1)! ), \text{ and }\\
\lim_{n \to \infty} \alpha_i(n,2k-1,M_n) &=0, \qquad  i=1,\ldots,k-1.
 \end{split}
 \end{equation}

If $\tau=2k$ for some integer $k$, then
\begin{equation}\label{beta1}
\lim_{n \to \infty} \beta_i(n,2k,M_n) =0, \qquad i=1,\ldots,k.
 \end{equation}
\end{lemma}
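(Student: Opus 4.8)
The plan is to exploit the characterizations of the quadrature nodes given in Subsection~\ref{quadSec} together with the asymptotic expansions from Lemma~\ref{lem7.1}. Recall that for $\tau = 2k-1$ the nodes $\alpha_0 < \cdots < \alpha_{k-1}$ are the roots of \eqref{alfi} with $P_i = Q_i^{(1,0,n,2)}$, and equivalently the roots of the Levenshtein polynomial $f_{2k-1}^{(n,s)}$ from \eqref{lev_poly}, where $s = \alpha_{k-1}$ is determined by $M = L_{2k-1}(n,s)$; similarly for $\tau = 2k$ the nodes $\beta_1 < \cdots < \beta_k$ are the nonminimal roots of \eqref{beti} with $P_i = Q_i^{(1,1,n,2)}$, while $\beta_0 = -1$ is fixed, and $s = \beta_k$ satisfies $M = L_{2k}(n,s)$.

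First I would pin down the limiting value of $s$. Using Lemma~\ref{lem7.1}, each adjacent polynomial $Q_i^{(a,b,n,2)}(t) = t^i + O(1/n)$ uniformly on $[-1,1]$, so the Levenshtein formula \eqref{L_bnd} gives, after dividing by the appropriate power of $n$, that $L_{2k-1}(n,s)/n^{k-1}$ and $L_{2k}(n,s)/n^k$ converge (uniformly for $s$ in compact subsets of the open Levenshtein interval) to explicit rational functions of the limiting $s$. Plugging in the normalizations $R(n,2k-1) \sim \tfrac{2n^{k-1}}{(k-1)!}$ and $R(n,2k) \sim \tfrac{n^k}{k!}$ and the hypothesis \eqref{asymp-1}, one solves for $\lim s$. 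A short computation should show that in the even case $\tau = 2k$ the condition $M_n/n^k \to \tfrac{1}{k!} + \delta$ with $\delta \ge 0$ forces $\lim s = 0$ (the value $s=0$ being exactly where the leading term of $L_{2k}$ matches $R(n,2k)$ asymptotically, for $\delta = 0$; for $\delta > 0$ one checks the limit is still $0$ because the leading asymptotics of $L_{2k}(n,s)$ in $n$ is insensitive to $s$ at this scale — here I would be careful and may instead argue directly from the polynomial equation below). In the odd case, $\lim s = \lim \alpha_{k-1}$, and the same analysis yields the limiting value, with the role of $\delta$ entering through the factor $\bigl(1 - Q_{k-1}^{(1,0)}(s)/Q_k^{(n,2)}(s)\bigr) \to 1 - s^{k-1}/s^k = 1 - 1/s$ — wait, more carefully $1 - s^{k-1}/s^k$ is not right; I would instead use that $Q_{k-1}^{(1,0,n,2)}(s)/Q_k^{(n,2)}(s) \to s^{k-1}/s^k$ only away from $s=0$, and handle the behavior near $0$ separately.

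Next I would pass to the limit in the defining polynomial equations. Since $Q_i^{(1,0,n,2)}(t) \to t^i$ and $Q_i^{(1,1,n,2)}(t) \to t^i$ uniformly on $[-1,1]$, the equation \eqref{alfi} converges coefficientwise to
\[
t^k \alpha_{k-1}^{\,k-1} - \alpha_{k-1}^{\,k}\, t^{k-1} = t^{k-1}\bigl(t\,\alpha_{k-1}^{\,k-1} - \alpha_{k-1}^{\,k}\bigr) = 0,
\]
whose roots are $t = 0$ (with multiplicity $k-1$) and $t = \alpha_{k-1}$. Hence all the limiting roots $\alpha_0, \ldots, \alpha_{k-2}$ collapse to $0$, while $\alpha_{k-1}$ stays at its own limit. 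To make this rigorous I would invoke continuity of roots of polynomials on their coefficients (Hurwitz-type argument), noting that the leading coefficient does not degenerate; since $\alpha_0 < \cdots < \alpha_{k-2}$ are real and confined to a compact interval, and any subsequential limit must be a root of the limit polynomial, and the only roots other than $\alpha_{k-1}$ are at $0$, we get $\alpha_i \to 0$ for $i = 0, \ldots, k-2$. The statement claims more for $\alpha_0$: namely $\alpha_0 \to -1/(1 + \delta (k-1)!)$. This discrepancy means the simple coefficientwise limit is too crude for $\alpha_0$ — I expect $\alpha_0$ does \emph{not} go to $0$ in general, and the asserted limit must come from a more refined analysis, presumably from the equation $M_n = L_{2k-1}(n, \alpha_{k-1})$ combined with a separate relation isolating $\alpha_0$ (perhaps $\alpha_0$ is governed by the smallest root and the constraint that the product of the $\alpha_i$ is controlled by $f_{2k-1}^{(n,s)}(1)$ or by the constant term of \eqref{alfi}). \textbf{This is the main obstacle}: the other nodes scale to $0$ like $O(n^{-1/2})$ or $O(n^{-1})$, but $\alpha_0$ has a nontrivial limit, so one must extract the correct balance — I would do this by writing $\alpha_{k-1} = s_n$, solving $M_n = L_{2k-1}(n,s_n)$ asymptotically to get the limit of $s_n$ in terms of $\delta$, then using \eqref{alfi} (or the product/sum of its roots via Vieta) to separate $\alpha_0$ from the cluster at $0$.

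For the even case $\tau = 2k$, the claim \eqref{beta1} is cleaner because $\beta_0 = -1$ is fixed by definition and only $\beta_1, \ldots, \beta_k$ are in play; the limiting form of \eqref{beti} is again $t^{k-1}(t \beta_k^{k-1} - \beta_k^k) = 0$, forcing $\beta_1, \ldots, \beta_{k-1} \to 0$ and $\beta_k \to \lim s$, and the $M_n$-to-$s$ analysis shows $\lim s = 0$ as well, so $\beta_i \to 0$ for all $i = 1, \ldots, k$. The only care needed is to confirm $\beta_k \to 0$: here I would check that $M_n = L_{2k}(n, \beta_k)$ with $M_n/n^k \to \tfrac{1}{k!} + \delta$ is consistent only with $\beta_k \to 0$, which follows from monotonicity of $L_{2k}(n,\cdot)$ on $\mathcal I_{2k} = [t_k^{1,0}, t_k^{1,1}]$ together with $t_k^{1,0}, t_k^{1,1} \to 0$ (both greatest zeros of adjacent polynomials $\to 0$ by Lemma~\ref{lem7.1}, since $Q_k^{(a,b,n,2)}(t) \to t^k$ whose only real zero is $0$). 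That last observation — that the whole interval $\mathcal I_{2k}$ shrinks to $\{0\}$ — in fact handles $\beta_1, \ldots, \beta_k$ at once, since all of them lie in $\mathcal I_\tau$ or its relatives; similarly for the odd case the containment $\alpha_1, \ldots, \alpha_{k-1} \in \mathcal I_{2k-1} = [t_{k-1}^{1,1}, t_k^{1,0}] \to \{0\}$ immediately gives those limits, leaving only $\alpha_0$ (which need not lie in $\mathcal I_{2k-1}$) to be treated by the refined argument above.
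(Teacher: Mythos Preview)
Your coefficientwise/Hurwitz argument breaks down precisely because $s = \alpha_{k-1}$ (and likewise $\beta_k$) itself tends to $0$: since $P_i^{(n)}(t)\to t^i$ uniformly, the polynomial $P_k(t)P_{k-1}(s)-P_k(s)P_{k-1}(t)$ in \eqref{alfi} converges to the identically zero polynomial, so continuity of roots yields nothing. Your fallback---that $\alpha_1,\ldots,\alpha_{k-1}\in\mathcal I_{2k-1}$---is also unjustified: only the top node $\alpha_{k-1}=s$ lies in $\mathcal I_{2k-1}$ by construction, while the smaller ones may sit well to the left of $t_{k-1}^{1,1}$ (indeed $\alpha_0$ does not tend to $0$, so this containment cannot hold for all of $\alpha_0,\ldots,\alpha_{k-1}$).

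The missing piece is the limit of $\lambda_n:=Q_k^{(1,0,n,2)}(s)/Q_{k-1}^{(1,0,n,2)}(s)$, a genuine $0/0$ indeterminacy since $s\to 0$. The paper extracts it from Levenshtein's identity (5.86) in \cite{Lev}, which writes $M_n=L_{2k-1}(n,s)$ in two equivalent forms, one involving $Q_{k-1}^{(1,0)}(s)/Q_k^{(n,2)}(s)$ and the other $Q_k^{(1,0)}(s)/Q_k^{(n,2)}(s)$; the two resulting limits combine to give $\lambda_n\to -1/(1+\delta(k-1)!)$. With this in hand your approach can actually be repaired: dividing \eqref{alfi} through by $P_{k-1}(s)$ produces $P_k(t)-\lambda_n P_{k-1}(t)=0$, now with nondegenerate leading coefficient and limit $t^{k-1}(t-\lambda)$, so Hurwitz forces $\alpha_0\to\lambda<0$ and $\alpha_i\to 0$ for $i\ge 1$. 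The paper takes a slightly different route---an interlacing bound $|\alpha_i|<t_k^{1,1}$ for $i\ge 1$ cited from \cite{BD_pms}, followed by Vieta's formula $\sum_i\alpha_i=\lambda_n+O(1/n)$ to isolate $\alpha_0$---but either way the crux is the computation of $\lim\lambda_n$, which you correctly flag as the obstacle without resolving.
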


\begin{proof}
We first prove \eqref{beta1}.  Recall that $t=\beta_1,\ldots, \beta_k$ are  solutions  of
  \begin{equation}\label{MnL2k} M_n=L_{2k}(n,t). \end{equation}  Note that  $L_{2k}(n,t)=\left(1 - \frac{Q_{k-1}^{(1,1,n,2)}(t)}{Q_k^{(0,1,n,2)}(t)}\right)R(n,2k-1)$ and so
  any such solution satisfies
   $$\frac{M_n}{R(n,2k)} \cdot \frac{R(n,2k)}{R(n,2k-1)}=1 - \frac{Q_{k-1}^{(1,1,n,2)}(t)}{Q_k^{(0,1,n,2)}(t)}.$$
  Let $\epsilon>0$.  For $\epsilon<|s|\le 1$  Lemma~\ref{lem7.1} implies that  $\frac{Q_{k-1}^{(1,1,n,2)}(t)}{Q_k^{(0,1,n,2)}(t)}=t^{-1}+O(1/n)$ stays bounded as $n\to\infty$.    Since $\frac{R(n,2k)}{R(n,2k-1)} \sim \frac{n}{k}\to \infty$ and $\frac{M_n}{R(n,2k)}\ge 1$,  it follows that for $n$ large enough all solutions of \eqref{MnL2k} must satisfy $|t|<\epsilon$.  Hence, \eqref{beta1} holds.

The limits $\lim_{n \to \infty} \alpha_i =0$, $i=1,\ldots,k-1$, from \eqref{alpha0} follow from
the inequalities
\[ t_k^{1,1} >|\alpha_{k-1}|>|\alpha_1|>
|\alpha_{k-2}|>|\alpha_2|>\cdots \]
(cf. \cite[Corollary 3.9]{BD_pms}).
Now we use the Vieta formula
\[ \sum_{i=0}^{k-1} \alpha_i =\frac{(n-k)Q_k^{(1,0,n,2)}(s)}{nQ_{k-1}^{(1,0,n,2)}(s)}-\frac{k}{n} \]
(follows directly from \eqref{alfi}; can be seen in \cite[Lemma 4.3a)]{BD}) to conclude that
\[ \lim_{n \to \infty} \alpha_0=\lim_{n \to \infty} \frac{Q_k^{(1,0,n,2)}(s)}{Q_{k-1}^{(1,0,n,2)}(s)}.\]
The behavior of the ratio $Q_k^{(1,0,n,2)}(s)/Q_{k-1}^{(1,0,n,2)}(s)$ can be found by using the identities
(5.86) from \cite{Lev}
\[ M_n=L_{2k-1}(n,s)=\left(1-\frac{Q_{k-1}^{(1,0,n,2)}(s)}{Q_k^{(n,2)}(s)}\right)R(n,2k-2)=
\left(1-\frac{Q_k^{(1,0,n,2)}(s)}{Q_k^{(n,2)}(s)}\right)R(n,2k). \]
These imply
\[ \lim_{n \to \infty} \frac{Q_k^{(n,2)}(s)}{Q_{k-1}^{(1,0,n,2)}(s)}=-\frac{1}{1+ \delta(k-1)!},  \ \ \lim_{n \to \infty} \frac{Q_k^{(1,0,n,2)}(s)}{Q_k^{(n,2)}(s)}=1,
 \]
correspondingly. Therefore
\[ \lim_{n \to \infty} \alpha_0=\lim_{n \to \infty} \frac{Q_k^{(1,0,n,2)}(s)}{Q_k^{(n,2)}(s)}\cdot
\frac{Q_k^{(n,2)}(s)}{Q_{k-1}^{(1,0,n,2)}(s)}=-\frac{1}{1+ \delta(k-1)! }, \]
which completes the proof. \end{proof}

\begin{remark}
Another proof of \eqref{beta1} follows from
$\sum_{i=1}^{k-1} \beta_i=\frac{(n-k-1)Q_k^{(1,1,n,2)}(s)}{nQ_{k-1}^{(1,1,n,2)}(s)}$
(see \cite[Lemma 4.3b]{BD}, directly from the defining equation \eqref{beti})
and the behaviour of the ratio $Q_k^{(1,1,n,2)}(s)/Q_{k-1}^{(1,1,n,2)}(s)$ in
the interval $\mathcal{I}_{2k}$ -- it is non-positive, increasing, equal to zero in the right end $s=t_k^{1,1}$,
and tending to 0 as $n$ tends to infinity in the left end $s=t_k^{1,0}$.
\end{remark}

Recall that in the case $\tau=2k-1$ there are associated weights $\rho_i=\rho_i(n,2k-1,M_n)$, $i=0,\ldots, k-1$, such that the quadrature rule
 \eqref{quad_f0_alfi} holds for all polynomials of degree at most $\tau$ and, similarly, in the   case  $\tau=2k$ there are weights $\gamma_i=\gamma_i(n,2k-1,M_n)$, $i=0,\ldots, k$, such that \eqref{quad_f0_beti} holds for all polynomials of degree at most $\tau$.
 In view of Lemma \ref{asymp_alfi-beti} we need the asymptotic of $\rho_0(n,2k-1,M_n) M_n$ only.

\begin{lemma}
\label{asymp_rho0}
If $\tau=2k-1$ and $(M_n)$ is a sequence as in \eqref{asymp-1}, then $$\lim_{n \to \infty} \rho_0(n,2k-1,M_n) M_n =(1+\delta(k-1)!)^{2k-1}.$$
 \end{lemma}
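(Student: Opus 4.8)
The plan is to extract $\rho_0 M_n$ from the quadrature rule \eqref{quad_f0_alfi} by testing it against a cleverly chosen polynomial of degree at most $2k-1$ that isolates the node $\alpha_0$. First I would recall from Subsection~\ref{quadSec} that the nodes $\alpha_1,\ldots,\alpha_{k-1}$ are the roots of $P_{k-1}(t):=Q_{k-1}^{(1,0,n,2)}(t)$ only in the limiting sense — more precisely, $\alpha_0,\ldots,\alpha_{k-1}$ are the roots of the degree-$k$ polynomial in \eqref{alfi}. A convenient choice is therefore $f(t)=\prod_{i=1}^{k-1}(t-\alpha_i)^2/\prod_{i=1}^{k-1}(1-\alpha_i)^2$, normalized so that $f(1)=1$; this polynomial has degree $2k-2\le 2k-1$, vanishes at $\alpha_1,\ldots,\alpha_{k-1}$, and is nonnegative everywhere. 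Applying \eqref{quad_f0_alfi} to this $f$ collapses the sum to
\[
f_0 = \frac{1}{M_n} + \rho_0 f(\alpha_0),
\]
so that $\rho_0 M_n = (M_n f_0 - 1)/f(\alpha_0)$.

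Next I would compute the two pieces asymptotically. By Lemma~\ref{asymp_alfi-beti}, $\alpha_i\to 0$ for $i=1,\ldots,k-1$ and $\alpha_0\to -1/(1+\delta(k-1)!)=:\alpha_0^\infty$, so $f(\alpha_0)\to (\alpha_0^\infty)^{2(k-1)}=(1+\delta(k-1)!)^{-2(k-1)}$ and $\prod_{i=1}^{k-1}(1-\alpha_i)^2\to 1$. Hence $f(t)\to t^{2k-2}$ uniformly on $[-1,1]$, and in particular the Krawtchouk coefficient $f_0=\langle f,1\rangle_{\mu_n}$ needs to be understood. Here I would use Lemma~\ref{lem7.1}: since $f(t)=t^{2k-2}+O(1/n)$ and $Q_i^{(n,2)}(t)=t^i+O(1/n)$, expanding $t^{2k-2}$ in the Krawtchouk basis and tracking the constant coefficient, one finds $f_0 = c_{2k-2}(n) + O(1/n)$ where $c_{2k-2}(n)=\langle t^{2k-2},1\rangle_{\mu_n}$ is the $(2k-2)$-th moment of $\mu_n$. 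The key estimate is that this moment decays like a negative power of $n$ — specifically, for binary Hamming space the even moments of $\mu_n$ behave like $\int t^{2m}d\mu_n(t)\sim \frac{(2m)!}{2^m m!}\,n^{-m}$ (the Gaussian moments, reflecting the CLT for $\mu_n$ after scaling by $\sqrt n$). Thus $f_0\sim \frac{(2k-2)!}{2^{k-1}(k-1)!}\,n^{-(k-1)}$, and since $M_n\sim \frac{2}{(k-1)!}n^{k-1}+\delta n^{k-1}=(1+\tfrac{(k-1)!\delta}{2})\cdot\frac{2}{(k-1)!}n^{k-1}$, the product $M_n f_0$ converges. Combining, $\lim_n \rho_0 M_n = \lim_n M_n f_0 \cdot (\alpha_0^\infty)^{-2(k-1)}$, and after simplification the factor $(1+\delta(k-1)!)^{2(k-1)}$ from $f(\alpha_0)^{-1}$ combines with a surviving factor $(1+\delta(k-1)!)$ from $M_n f_0$ (the $-1$ in $M_n f_0 - 1$ being negligible) to give $(1+\delta(k-1)!)^{2k-1}$.

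The main obstacle I anticipate is pinning down the asymptotic constant in $M_n f_0$: one must verify that $M_n f_0 \to 1+\delta(k-1)!$ exactly, not merely that it converges. The cleanest route is probably to avoid computing $f_0$ as a raw moment and instead use the identity already available in the proof of Lemma~\ref{asymp_alfi-beti}, namely $M_n = L_{2k-1}(n,s) = \bigl(1 - Q_{k-1}^{(1,0,n,2)}(s)/Q_k^{(n,2)}(s)\bigr)R(n,2k-2)$ together with $\lim_n Q_k^{(n,2)}(s)/Q_{k-1}^{(1,0,n,2)}(s) = -1/(1+\delta(k-1)!)$; rewriting $f_0$ in terms of these same ratios (since $f_0$ for the Levenshtein-type polynomial has an explicit closed form analogous to the one in Subsection~\ref{quadSec}) lets the algebra telescope. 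Alternatively, and perhaps most transparently, one can test \eqref{quad_f0_alfi} against $f(t) = t^{2k-2}$ directly: then $f_0 = c_{2k-2}(n)$, the right-hand side is $1/M_n + \sum_{i=0}^{k-1}\rho_i \alpha_i^{2k-2}$, and since $\alpha_i^{2k-2}=O(n^{-(k-1)})$ for $i\ge 1$ while $\rho_i = O(1)$ one shows these terms are lower order, leaving $M_n c_{2k-2}(n) = 1 + \rho_0 M_n \alpha_0^{2k-2} + o(1)$; solving for $\rho_0 M_n$ and inserting the limits of $c_{2k-2}(n)$, $M_n$, and $\alpha_0$ yields the claim. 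I would carry out this last version, with the moment asymptotics of $\mu_n$ proved by a short induction on the recurrence \eqref{Kraw3term} (or cited from the CLT for binomial-type weights), as the cleanest path.
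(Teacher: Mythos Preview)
Your strategy of isolating $\rho_0$ through the quadrature identity \eqref{quad_f0_alfi} is exactly the right one, but the particular test polynomial you chose creates a difficulty you do not resolve. With the \emph{even} polynomial $f(t)=\prod_{i=1}^{k-1}(t-\alpha_i)^2/\prod_{i=1}^{k-1}(1-\alpha_i)^2$ you correctly obtain $\rho_0 M_n=(M_n f_0-1)/f(\alpha_0)$, and $f(\alpha_0)\to(1+\delta(k-1)!)^{-2(k-1)}$ is fine. The trouble is the numerator. First, $M_n f_0$ converges to a \emph{finite} limit (namely $2+\delta(k-1)!$, as one sees a posteriori), so the ``$-1$'' is not negligible. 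Second, your replacement $f_0\approx\int t^{2k-2}\,d\mu_n\sim(2k-3)!!\,n^{-(k-1)}$ gives the wrong constant for $k\ge 3$: the lower-degree monomials of $\prod_{i\ge 1}(t-\alpha_i)^2$ carry coefficients that are powers of the $\alpha_i$, and Lemma~\ref{asymp_alfi-beti} supplies only $\alpha_i\to 0$, no rate, so you cannot discard their contribution to $f_0$. Your fallback of testing $f(t)=t^{2k-2}$ has the same defect in disguise: even granting $\alpha_i=O(n^{-1/2})$ (which you have not shown), the terms $M_n\rho_i\alpha_i^{2k-2}$ for $i\ge 1$ are of size $M_n\cdot O(1)\cdot O(n^{-(k-1)})=O(1)$, not $o(1)$, since $M_n\sim c\,n^{k-1}$ and $\sum_{i\ge 1}\rho_i\to 1$; hence they cannot be thrown away and you are back to needing precise node asymptotics you do not have.

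The paper sidesteps all of this by exploiting the symmetry of $\mu_n$ when $q=2$: it applies \eqref{quad_f0_alfi} to the \emph{odd} monomials $t,t^3,\ldots,t^{2k-1}$, for which $f_0=0$ exactly, and solves the resulting linear system. Equivalently, test with the odd degree-$(2k-1)$ polynomial
\[
g(t)=\frac{t\prod_{i=1}^{k-1}(t^2-\alpha_i^2)}{\prod_{i=1}^{k-1}(1-\alpha_i^2)},
\]
which satisfies $g_0=0$, $g(1)=1$, and $g(\alpha_i)=0$ for $i\ge 1$; the quadrature collapses to $0=1/M_n+\rho_0\,g(\alpha_0)$ and yields the closed form
\[
\rho_0 M_n=-\frac{\prod_{i=1}^{k-1}(1-\alpha_i^2)}{\alpha_0\prod_{i=1}^{k-1}(\alpha_0^2-\alpha_i^2)}.
\]
Now only the \emph{limits} of the $\alpha_i$ from Lemma~\ref{asymp_alfi-beti} are needed (no rates), and one reads off $\rho_0 M_n\to -1/(\alpha_0^\infty)^{2k-1}=(1+\delta(k-1)!)^{2k-1}$. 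The fix to your argument is thus a one-line change: replace your squared product by its odd counterpart so that $f_0$ vanishes identically rather than having to be estimated.
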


\begin{proof}
This follows from the formula
\[ \rho_0(n,2k-1,M_n) M_n = - \frac{ (1-\alpha_1^2) (1-\alpha_2^2)\cdots(1-\alpha_{k-1}^2) }
                   { \alpha_0 (\alpha_0^2-\alpha_1^2)(\alpha_0^2-\alpha_2^2)
                   \cdots(\alpha_0^2-\alpha_{k-1}^2)} \]
(cf. \cite[Theorem 3.8]{BD_pms},
can be derived by setting $f(t)=t,t^3,\ldots,t^{2k-1}$ in \eqref{quad_f0_alfi}
and resolving the obtained linear system with respect to $\rho_0,\ldots,\rho_{k-1}$) and Lemma \ref{asymp_alfi-beti}.
\end{proof}

\medskip

\begin{theorem}
\label{Thm7.1}
Let $\tau$ be a positive integer and   $(M_n)$ be a sequence as in \eqref{asymp-1}. Then we have
\begin{equation}
\label{l-bound-a1}
\liminf_{n  \to \infty} \frac{\mathcal{E}(n,M_n;h)}{M_n} \geq h(0).
\end{equation}
\end{theorem}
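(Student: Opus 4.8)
The plan is to combine the universal lower bound of Theorem~\ref{thm 7} with the asymptotic information on the quadrature nodes and the relevant weight already established in Lemmas~\ref{asymp_alfi-beti} and~\ref{asymp_rho0}. First I would treat the two parities of $\tau$ separately. For $\tau=2k$, Theorem~\ref{thm 7} gives
\[
\frac{\mathcal{E}(n,M_n;h)}{M_n}\ge \gamma_0(n,2k,M_n)h(\beta_0)+\sum_{i=1}^{k}\gamma_i(n,2k,M_n)h(\beta_i),
\]
with $\beta_0=-1$. Since all $\gamma_i$ are positive and $h$ is positive, every term is nonnegative, so it suffices to bound one of them from below. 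By Lemma~\ref{asymp_alfi-beti}, $\beta_i(n,2k,M_n)\to 0$ for $i=1,\dots,k$, and the weights satisfy $\sum_{i=0}^k\gamma_i = f_0$-consistency for $f\equiv 1$, namely $\gamma_0+\sum_{i=1}^k\gamma_i = 1-\tfrac1{M_n}\to 1$. Because $\gamma_0$ is bounded (indeed $0<\gamma_0 M_n<1$ in the relevant regime by Lemma~\ref{Lem4.1}, so $\gamma_0\to 0$), the weights $\gamma_1,\dots,\gamma_k$ sum to a quantity tending to $1$. Dropping the $\gamma_0 h(-1)$ term and using continuity of $h$ at $0$ together with $\beta_i\to 0$, I would conclude $\liminf_n \mathcal{E}(n,M_n;h)/M_n \ge \liminf_n \sum_{i=1}^k\gamma_i h(\beta_i) = h(0)$.

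For $\tau=2k-1$, Theorem~\ref{thm 7} gives $\mathcal{E}(n,M_n;h)/M_n\ge \rho_0 h(\alpha_0)+\sum_{i=1}^{k-1}\rho_i h(\alpha_i)$. Here the node $\alpha_0$ does \emph{not} go to $0$; by Lemma~\ref{asymp_alfi-beti} it tends to $-1/(1+\delta(k-1)!)$, and by Lemma~\ref{asymp_rho0} the product $\rho_0 M_n$ tends to $(1+\delta(k-1)!)^{2k-1}$, so $\rho_0$ itself tends to $0$ whenever $M_n\to\infty$ (which holds for $k\ge 2$; the case $k=1$ must be checked by hand, but there $\tau=1$ and the bound is $\rho_0 h(\alpha_0)M_n$ with $\alpha_0\to -1/(1+\delta)$ and $\rho_0M_n\to 1+\delta$, whose product... — here one needs $\xi$-type reasoning, see below). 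Thus the term $\rho_0 h(\alpha_0)$ contributes a vanishing amount to $\mathcal{E}/M_n$ in the limit (using that $h(\alpha_0)$ stays bounded since $\alpha_0$ stays in a compact subset of $[-1,1)$), while the remaining nodes $\alpha_1,\dots,\alpha_{k-1}$ all tend to $0$. Again using $\sum_{i=0}^{k-1}\rho_i = 1-1/M_n\to 1$ and $\rho_0\to 0$, the weights $\rho_1,\dots,\rho_{k-1}$ sum to a quantity approaching $1$, and continuity of $h$ at $0$ yields $\sum_{i=1}^{k-1}\rho_i h(\alpha_i)\to h(0)$, hence $\liminf_n \mathcal{E}(n,M_n;h)/M_n\ge h(0)$.

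The step I expect to be the main obstacle is the boundary case $k=1$ in the odd branch (i.e. $\tau=1$), where there is no node tending to $0$ at all: the single node $\alpha_0$ has limit $-1/(1+\delta)\in(-1,0]$ and $\rho_0 M_n\to 1+\delta$, giving the bound $(1+\delta)h(-1/(1+\delta))$ in the limit. One must verify this is $\ge h(0)$; since $h$ is absolutely monotone on $[-1,1)$, it is in particular convex, and a one-variable convexity estimate (writing $0$ as a convex combination of $-1/(1+\delta)$ and a point to the right, or using the supporting-line inequality $h(0)\le h(x)+ h'(x)(0-x)$ at $x=-1/(1+\delta)$ combined with monotonicity) should give $(1+\delta)h(-1/(1+\delta))\ge h(0)$; in fact for $\delta=0$ this reads $h(-1)\ge h(0)$ which is false, so one must instead use that $M_n>R(n,1)=2$ forces a strict-inequality/limiting argument, or restrict to $\delta>0$ — this delicate edge case, and making the passage to the $\liminf$ rigorous when several $\gamma_i$ or $\rho_i$ could individually oscillate, is where care is needed. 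For $k\ge 2$ the argument is clean because there is always at least one node forced to $0$ carrying asymptotic weight $1$.
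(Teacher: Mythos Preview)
Your argument is essentially the paper's own: both invoke Theorem~\ref{thm 7}, use $\sum_i\rho_i=1-1/M_n$ (resp.\ $\sum_i\gamma_i=1-1/M_n$) obtained from the quadrature with $f\equiv 1$, and appeal to Lemmas~\ref{asymp_alfi-beti}, \ref{asymp_rho0}, and~\ref{Lem4.1} to send the stray weight to zero while the remaining nodes collapse to $0$; the paper packages this as $\mathcal{E}(n,M_n;h)\ge h(0)M_n+c+M_n\,o(1)$ for a bounded constant $c$ and then divides by $M_n$, which is the same computation you carry out by dropping the nonnegative stray term. Your worry about $\tau=1$ is legitimate and is not addressed in the paper either: the passage $c/M_n\to 0$ (equivalently your $\rho_0\to 0$) tacitly uses $M_n\to\infty$, which fails precisely when $k=1$, and in that regime \eqref{l-bound-a1} need not hold (e.g.\ $M_n\equiv 3$, $h(t)=e^t$ gives $\mathcal{E}(n,3;h)/3\to \tfrac{2}{3}h(-1/3)<h(0)$).
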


\begin{proof} Let $\tau=2k-1$. We use Lemmas \ref{asymp_alfi-beti} and \ref{asymp_rho0}
    to calculate the odd branch of \eqref{bound_odd_designs} and \eqref{bound_codes_odd}:
    \begin{eqnarray*}
    \mathcal{E}(n,M_n;h) &\geq& M_n \sum_{i=0}^{k-1} \rho_i h(\alpha_i) \\
    &=& M_n \left(\rho_0h(\alpha_0)+h(0)\sum_{i=1}^{k-1} \rho_i+o(1)\right) \\
    &=& M_n \left(\rho_0(h(\alpha_0)-h(0))+h(0)\left(1-\frac{1}{M_n}+o(1)\right)\right) \\
    &=& h(0)M_n+c_3 +M_no(1),     \end{eqnarray*}      where $o(1)$ is a term that goes to 0 as $n\to\infty$ and     \[ c_3=\left(\left(1+\delta (k-1)!\right)^{2k-1}\right)\left(h\left(-\frac{1}{1+\delta (k-1)!}\right)-h(0)\right)-h(0). \]

    Similarly, in the even case we obtain
    \begin{eqnarray*}
    \mathcal{E}(n,M_n;h) &\ge& M_n\left(\gamma_0(h(-1)-h(0))+h(0)\left(1-\frac{1}{M_n}\right)+o(1)\right) \\
    &=& h(0)M_n+c_4+M_no(1),
    \end{eqnarray*}     where $c_4=\gamma_0M_n(h(-1)-h(0))-h(0)$ (here $\gamma_0 M_n \in (0,1)$, see Lemma \ref{Lem4.1}).  The above results establish \eqref{l-bound-a1}.  
        \end{proof}

\section{Examples}

Clearly, all codes which attain the Levenshtein bounds \eqref{L_bnd} achieve our bounds
\eqref{bound_odd_designs} and \eqref{bound_codes_odd} and are therefore universally optimal
(see Table 6.4 in \cite{Lev}). We show here two other examples where our bounds are close.
These examples are typical
for the behavior of our bounds.

There is a unique optimal (nonlinear) binary code of length 10 with 40 codewords
and minimum distance 4. We have $q=2$, $n=10$, $M=40$ and $\tau=3$.
Our bounds are very close, for example if $h=\frac{1}{5(1-t)}$, then
the actual energy is $8.125$, the universal bound is $\approx 8.0722$, the
pair-covering bound is $\approx 8.0857$, obtained by
\begin{eqnarray*}
f(t) &=& 0.111 t^3 + 0.200 t^2 + 0.205 t + 0.2 \\
     &=& 0.220 Q_0^{(10,2)}(t) + 0.236 Q_1^{(10,2)}(t) + 0.180 Q_2^{(10,2)}(t) + 0.080 Q_3^{(10,2)}(t)
\end{eqnarray*}
(here and below all numbers are truncated after the fourth digit).

There is a 5-design of 128 points in $H(9,2)$ (here $q=2$, $n=9$, $M=128$ and $\tau=5$).
For the potentials $h(t)=\left(\frac{2}{9(1-t)}\right)^s$  and $s = 0.1, 0.25, 0.5, 0.75, 1, 2.5$, respectively,
the actual energy is $\approx 109.861$, $88.593$, $62.284$, $44.143$, $31.546$,
$5.029$, the corresponding universal bound \eqref{bound_odd_designs} is $\approx 109.853$, $88.571$, $62.236$, $44.066$, $31.440$,
$4.828$, and the pair-covering bound is $\approx 109.858$, $88.584$, $62.264$, $44.111$, $31.503$,
$4.953$. All these bounds are valid for binary $(9,128)$ codes as well. For example,
the above pair-covering bound of $\approx 31.503$ (i.e. for $h(t)=\frac{2}{9(1-t)}$) is obtained by
the polynomial
\begin{eqnarray*}
f(t) &=& 0.183 t^5 + 0.345 t^4 + 0.284 t^3 + 0.216 t^2 + 0.216 t + 0.221 \\
     &=& 0.257 Q_0^{(9,2)}(t) + 0.330 Q_1^{(9,2)}(t) + 0.366 Q_2^{(9,2)}(t) + 0.306 Q_3^{(9,2)}(t) \\
     && + \, 0.159 Q_4^{(9,2)}(t) + 0.046 Q_5^{(9,2)}(t)
\end{eqnarray*}
that satisfies the condition (A2) as well. Here, the best lower bound is $31.525$ and
can be obtained by a polynomial of degree 9
\begin{eqnarray*}
f(t) &=& 0.540 t^9 - 1.041 t^7 + 0.773 t^5 + 0.345 t^4 + 0.171 t^3 + 0.210 t^2 + 0.222 t + 0.222 \\
     &=& 0.257 Q_0^{(9,2)}(t) + 0.330 Q_1^{(9,2)}(t) + 0.361 Q_2^{(9,2)}(t) + 0.296 Q_3^{(9,2)}(t) \\
     && + \, 0.159 Q_4^{(9,2)}(t) + 0.0394 Q_5^{(9,2)}(t) + 0.005 Q_9^{(9,2)}(t)
\end{eqnarray*}
that also satisfies (A2).

{\bf Acknowledgement.} The authors thank to the anonymous referees for valuable remarks.

\end{document}